\numberwithin{equation}{section}
\newtheorem{thm}{Theorem}[section]
\newtheorem{lemma}[thm]{Lemma}
\newtheorem{prop}[thm]{Proposition}
\newtheorem{cor}[thm]{Corollary}
\newtheorem{rem}[thm]{Remark}
\newtheorem{df}[thm]{Definition}
\newcommand{\A}{\mathcal{A}}
\newcommand{\B}{\mathcal{B}}
\newcommand{\K}{\mathcal{K}}
\newcommand{\HH}{\mathcal{H}}
\newcommand{\M}{\mathcal{M}}
\newcommand{\R}{\mathbb{R}}
\newcommand{\C}{\mathbb{C}}
\newcommand{\N}{\mathbb{N}}
\newcommand{\Z}{\mathbb{Z}}
\newcommand{\Q}{\mathbb{H}}
\newcommand{\D}{D\mkern-11.5mu/\,}
\newcommand{\inner}[1]{\left<#1\right>}
\newcommand{\qiso}{\smash[t]{\mathrm{QISO}}\rule{0pt}{8pt}^+_J}
\newcommand{\qisot}{\widetilde{\rule{0pt}{7pt}\smash[t]{\mathrm{QISO}}}\rule{0pt}{8pt}^+_J}
\newcommand{\qisor}{\smash[t]{\mathrm{QISO}}\rule{0pt}{8pt}^+_{\R}}
\newcommand{\qisotr}{\widetilde{\rule{0pt}{7pt}\smash[t]{\mathrm{QISO}}}\rule{0pt}{8pt}^+_{\R}}
\newenvironment{mat}{\bigg(\!\!\begin{array}{cc}}{\end{array}\!\!\bigg)}
\newenvironment{mfour}{\begin{small}\begin{pmatrix}}{\end{pmatrix}\end{small}}
\newcommand{\mc}[1]{\mathcal{#1}}
\renewcommand{\bar}{\overline}
\newcommand{\tr}{\mathrm{Tr}}
\newcommand{\punto}{\item[\raisebox{1.5pt}{\small$\blacktriangleright$}]}
\newcommand{\catA}{\mathfrak{C}_J}
\newcommand{\catB}{\mathfrak{C}_{J,\R}}
\title{Quantum Isometries of the finite noncommutative\\ geometry of the Standard Model}
\date{\empty}
\author{~\\
\hspace*{-.4cm}Jyotishman Bhowmick$^{\,*}$,
Francesco D'Andrea$^{\,\ddag}$,
Ludwik D{\k a}browski$^{\,\ddag}$\\[20pt]
\hspace*{-.9cm}\textit{\small $^*$ Abdus Salam International Center for Theoretical Physics (ICTP), Strada Costiera 11, I-34151 Trieste, Italy}\\
\hspace*{-.9cm}\textit{\small $^\ddag$ Scuola Internazionale Superiore di Studi Avanzati (SISSA), via Bonomea 265, I-34136 Trieste, Italy}}
\begin{document}

\maketitle

\begin{abstract}\noindent%
We compute the quantum isometry group of the finite noncommutative geometry $F$ describing the internal degrees of freedom in the Standard Model of particle physics.
We show that this provides genuine quantum symmetries of the spectral triple corresponding to $M\times F$, where $ M $ is a compact spin manifold.
We also prove that the bosonic and fermionic part of the spectral action are preserved by these symmetries.
\end{abstract}

\bigskip

%%% ======================================================================

\section{Introduction}
In modern  theoretical physics, symmetries play a fundamental role in determining the dynamics of a theory. In the two foremost examples,
namely General Relativity and the Standard Model of elementary particles, the dynamics is dictated by invariance under diffeomorphisms and
under local gauge transformations respectively.
As a way to unify external (i.e.~diffeomorphisms) and internal (i.e.~local gauge) symmetries, Connes and Chamseddine proposed a model from Noncommutative Geometry \cite{Con94} based on the product of the canonical commutative spectral triple of a %compact Riemannian
compact Riemannian spin manifold $M$ and a finite dimensional noncommutative one, describing an ``internal'' finite noncommutative space $F$ \cite{CC97,CC08,Con06,CM08}. In this picture, diffeomorphisms are realized as outer automorphisms of the algebra, while inner automorphisms correspond to the gauge transformations. Inner fluctuations of the Dirac operator are divided in two classes: the $1$-forms coming from commutators with the Dirac operator of $M$ give the gauge bosons, while the $1$-forms coming from the Dirac operator of $F$ give the Higgs field. The gravitational and bosonic part $S_b$ of the action is encoded in the spectrum of the gauged Dirac operator, which is invariant under isometries of the Hilbert space. The fermionic part $S_f$ is also defined in terms of the spectral data.
The result is an Euclidean version of the Standard Model minimally coupled to gravity (cf.~\cite{CM08} and references therein).

In his ``Erlangen program'', Klein linked the study of geometry with the analysis of its group of symmetries. Dealing with quantum geometries, it is natural to study quantum symmetries.
The idea of using quantum group symmetries to understand the conceptual significance of the finite geometry $F$ is mentioned in a final remark by Connes in \cite{Con98}.
Preliminary studies on the Hopf-algebra level appeared in \cite{Kas95,Coq97,DNS98}.
Following Connes' suggestion, quantum automorphisms of finite-dimensional complex \mbox{$C^*$-algebras} were introduced by Wang
in \cite{Wan98a,Wan98b} and later the quantum permutation groups of finite sets and graphs have been studied by a number of
mathematicians, see e.g.~\cite{Ban05a,Ban05b,Bic03,Sol10}. These are compact quantum groups in the sense of Woronowicz \cite{Wor95}.
The notion of compact quantum symmetries for ``continuous'' mathematical structures, like commutative and noncommutative manifolds (spectral triples),
first appeared in \cite{Gos07}, where quantum isometry groups were defined in terms of a Laplacian, followed by the definition of ``quantum groups of orientation preserving isometries'' based on the theory of spectral triples
in \cite{BG09}, and on spectral triples with a real structure in \cite{Gos10}. Computations of these compact quantum groups  were done for several examples, including the tori, spheres, Podle\'s quantum spheres, and Rieffel deformations of compact Riemannian spin manifolds.
For these studies we refer to \cite{BGS08,BG09,BG10,BG09b,BS10} and references therein.

The finite noncommutative geometry $F=(A_F,H_F,D_F,\gamma_F,J_F)$ describing the internal
space of the Standard Model is given by a unital real spectral triple over the finite-dimensional
real $C^*$-algebra $A_F=\C\oplus\Q\oplus M_3(\C)$, with $\Q$ the field of quaternions. Let $B_F\subset\B(\HH)$
be the smallest complex $C^*$-algebra containing $A_F$ as a real $C^*$-subalgebra.
In this article we first compute the quantum group of orientation and real structure preserving isometries of the spectral triple $(B_F,H_F,D_F,\gamma_F,J_F)$;
next we show that this quantum symmetry can be extended to get quantum isometries of the product of this
spectral triple with the canonical spectral triple of $M$. Thus, we have genuine quantum symmetries of the
full spectral triple of the Standard Model. Moreover these quantum symmetries preserves the spectral action
in a suitable sense. Finally we compute the maximal quantum subgroup of the quantum isometry group whose
coaction is a quantum automorphism of the real $C^*$-algebra $A_F$.

The plan of this article is as follows.
We start by recalling in Sec.~\ref{sec:cqg} some basic definitions and facts about compact quantum groups and quantum isometries.
In Sec.~\ref{sec:tre} we introduce the spectral triple $F$ and state the main result. Since quantum groups, coactions, etc.~are defined
in the framework of complex ($C^*$-)algebras, we replace $A_F$ by $ B_F $ and compute the quantum isometry group
of the latter in the sense of \cite{Gos10}. As shown in Sec.~\ref{sec:iso}, this is given by the free product $C(U(1))*A_{\mathrm{aut}}(M_3(\C))$,
where $A_{\mathrm{aut}}(M_n(\C))$ is Wang's quantum automorphism group of $M_n(\C)$
\cite{Wan98a}. In Sec.~\ref{sec:5}, we discuss the invariance of the spectral action under quantum isometries.
In Sec.~\ref{sec:6} we explain how the result changes if we work with real instead of complex algebras. The final section deals with the proof of the main result, that is, Proposition \ref{prop:main}.

Throughout the paper, by the symbol $\otimes_{\mathrm{alg}}$ we always mean the algebraic tensor product over $\C$,
by $\otimes$ the minimal tensor product of complex $C^*$-algebras or the completed tensor product of
Hilbert modules over complex $C^*$-algebras. The symbol $\otimes_{\R}$ denotes the tensor product
over the real numbers.
Unless otherwise stated, all algebras are assumed to be unital complex
associative involutive algebras.
We denote by $\mathcal{N}^*$ the set of all bounded linear functionals $\mathcal{N}\to\C$
on the normed linear space $\mathcal{N}$,
by $\mc{M}(\A)$ the  multiplier algebra of the complex $C^*$-algebra $\A$,
by $\mc{L}(\HH)$ the adjointable operators on the Hilbert module $\HH$
and by $\K(\HH)$ the compact operators on the Hilbert space $\HH$.
For a unital complex $C^*$-algebra $\A$, we implicitly use the identification of $\mc{M}(\K(\HH)\otimes\A)$ with
the set of all adjointable operators on the Hilbert $\A$-module $\HH\otimes\A$.
By abelianization of $\A$ we mean the quotient of $\A$ by its commutator $C^*$-ideal.
Given a matrix $u$ with entries $u_{ij}$ in a $C^*$-algebra $\A$, we denote by
$u_{ij}^*=(u_{ij})^*$ the conjugate of the element $u_{ij}$, and by $(u^*)_{ij}=u_{ji}^*$ the
entry $(i,j)$ of the adjoint matrix $u^*$.
Lastly, we want to attract the reader's attention to a choice of notation.  The notation $ \qisot $ used in this article is the same as $ \widetilde{\rule{0pt}{7pt}\smash[t]{\mathrm{QISO}}}\rule{0pt}{8pt}^+_{\mathrm{real}} $ of \cite{Gos10}. We do this to avoid confusion with the newly defined object $ \qisotr $ of Section \ref{sec:6} in the context of quantum isometries of real $C^*$-algebras.

%%% ======================================================================
\section{Compact quantum groups and quantum isometries}\label{sec:cqg}

\subsection{Some generalities on Compact Quantum Groups}

We begin by recalling the definition of compact quantum groups and their coactions from  \cite{Wor87,Wor95}. We shall use most of the terminology of
\cite{Wan95}, for example Woronowicz $C^*$-subalgebra, Woronowicz
$C^*$-ideal, etc., however with the exception that
Woronowicz \mbox{$C^*$-algebras} will be called
compact quantum groups, and we will not use
the term compact quantum groups for the dual objects as done in
\cite{Wan95}.

\begin{df}
A \emph{compact quantum group} (to be denoted by CQG from now on) is a pair $(Q,\Delta)$ given by a complex unital \mbox{$C^*$-algebra}
$Q$ and a unital $C^*$-algebra morphism $\Delta:Q\to Q\otimes Q$ such that
\begin{itemize}
\item[i)]
$\Delta$ is coassociative, i.e.
$$
(\Delta\otimes id)\circ\Delta=(id\otimes\Delta)\circ\Delta
$$
as equality of maps $Q\to Q\otimes Q\otimes Q$;
\item[ii)]
$\mathrm{Span}\bigl\{(a\otimes 1_Q)\Delta(b)\,\big|\,a,b\!\in\! Q\bigr\}$ and
$\,\mathrm{Span}\bigl\{(1_Q\otimes a)\Delta(b)\,\big|\,a,b\!\in\! Q\bigr\}$
are norm-dense in $Q\otimes Q$.
\end{itemize}
\end{df}

\noindent
For $Q=C(G)$, where $G$ is a compact topological group,
conditions i) and ii) correspond to the associativity and the cancellation
property of the product in $G$, respectively.

\begin{df}
A \emph{unitary corepresentation} of a compact quantum group $(Q,\Delta) $ on a Hilbert space $\HH$ is a unitary element $U\in\M(\K(\HH)\otimes Q)$
satisfying
$$
(id\otimes \Delta ) U = U_{(12)} U_{(13)} \;,
$$
where we use the standard leg numbering notation (see e.g.~\cite{MD98}).
\end{df}

\noindent
If $Q=C(G)$, $U$ corresponds to a strongly continuous unitary representation of $G$.

For any compact quantum group $Q$ (see \cite{Wor87,Wor95}), there always exists a canonical dense $*$-subalgebra $Q_0\subset Q$ which is spanned by  the matrix coefficients of the finite dimensional unitary corepresentations of $Q$ and two maps
$\epsilon : Q_0 \to \C$ (counit) and  $\kappa : Q_0 \to Q_0$ (antipode) which make $Q_0$ a Hopf $*$-algebra.

\begin{df}
A \emph{Woronowicz $C^*$-ideal} of a CQG $(Q,\Delta)$ is a $C^*$-ideal $I$ of $Q$ such that $\Delta(I)\subset\ker(\pi_I\otimes\pi_I)$, where $\pi_I:Q\to Q/I$ is the projection map.
The quotient $Q/I$ is a CQG with the induced coproduct.
\end{df}

If $Q=C(G)$ are continuous functions on a compact topological group $G$, closed subgroups of $G$ correspond to the quotients of $Q$ by its Woronowicz $C^*$-ideals. While quotients $Q/I$ give ``compact quantum subgroups'', $C^*$-subalgebras $Q'\subset Q$ such that $\Delta(Q')\subset Q'\otimes Q'$ describe ``quotient quantum groups''.

\begin{df}\label{def:alpha}
We say that a CQG $(Q,\Delta)$ coacts on a unital $C^*$-algebra $\A$
if there is a unital $C^*$-homomorphism (called a \emph{coaction})
$\alpha:\A\to\A\otimes Q$ such that:
\begin{itemize}
\item[i)] $(\alpha \otimes id) \alpha=(id\otimes \Delta) \alpha$,
\item[ii)]
$\mathrm{Span}\bigl\{\alpha(a)(1_\A \otimes b)
\,\big|\,a\in\A,\,b\in Q\bigr\}$ is norm-dense in $\A\otimes Q$.
\end{itemize}
The coaction is \emph{faithful} if any compact quantum group $Q'\subset Q$ coacting on $\A$
coincides with $Q$.
\end{df}

\noindent
It is well known (cf.~\cite{Pod95,Wan98a}) that condition (ii) in Def.~\ref{def:alpha} is equivalent to the existence of a norm-dense unital $*$-subalgebra $\A_0$ of $\A$ such that $\alpha(\A_0) \subset \A_0 \otimes_{\mathrm{alg}}Q_0$ and $(id\otimes \epsilon) \alpha=id$
on $\A_0$.
For later use, let us now recall the concept of universal CQGs $A_u(R)$ as defined in
\cite{DW96,Wan98b} and references therein.

\begin{df}\label{def:WangA}
For a fixed $n \times n$ positive invertible matrix $R$, $A_u(R)$ is the universal $C^*$-algebra generated
by $\{u_{ij},\,i,j=1,\ldots,n\}$ such that
$$
uu^*=u^*u=\mathbb{I}_n\;,\qquad
u^t(R\bar uR^{-1})=(R\bar uR^{-1})u^t=\mathbb{I}_n \;,
$$
where $u:=((u_{ij}))$, $u^*: = (( u^*_{ji} )) $ and
$\,\bar u:=(u^*)^t=((u_{ij}^*))$.
The coproduct $\Delta$ is given by
$$
\Delta(u_{ij})=\sum\nolimits_k u_{ik} \otimes u_{kj} \;.
$$
\end{df}

\noindent
Note that $u$ is a unitary corepresentation of $A_u(R)$ on $\C^n$.

The $A_u(R)$'s are universal in the sense that every compact \emph{matrix}
quantum group (i.e.~every CQG generated by the matrix entries of a finite-dimensional
unitary corepresentation) is a quantum subgroup of $A_u(R)$ for some $R>0$ \cite{Wan98b}.
It may also be noted that $A_u(R)$ is the universal object in the category of CQGs which admit a unitary corepresentation on $\C^n$ such that the adjoint coaction on the finite-dimensional $C^*$-algebra $M_n(\C)$ preserves  the functional $M_n(\C)\ni m \mapsto \mathrm{Tr}(R^t m)$ (see \cite{Wan99}).

We observe the following elementary fact which is going to be used in the sequel.

\begin{lemma}\label{lemma:trace}
Let $\HH=\C^n$, $n\in\N$ and $B\in M_n(\B)$ be
a matrix with entries in a unital $*$-algebra $\B$. Then
$$
( \tr_{\HH} \otimes {\rm id} ) \,B(L\otimes 1)B^*=\tr_{\HH}(L)\cdot 1_{\B}
$$
for any linear operator $L$ on $\HH$ if and only if $B^t\!$ is unitary.
\end{lemma}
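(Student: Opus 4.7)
The plan is to expand both sides of the identity entry-by-entry and recognize the resulting algebraic constraint on the $b_{ij}$ as precisely a unitarity relation on $B^t$.

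Using the identification $M_n(\B)\cong M_n(\C)\otimes\B$, I write $B=(b_{ij})$ and note that since each $L_{ij}\in\C$ is central in $M_n(\B)$, the $(p,q)$-entry of $B(L\otimes 1)B^*$ unfolds to $\sum_{i,j} b_{pi}\,L_{ij}\,b_{qj}^*$. Setting $p=q$ and summing then yields
$$
(\tr_{\HH}\otimes\mathrm{id})\bigl(B(L\otimes 1)B^*\bigr)=\sum_{i,j} L_{ij}\,M_{ij},\qquad M_{ij}:=\sum_{p} b_{pi}b_{pj}^*.
$$
A direct index check, using $(B^t)_{ip}=b_{pi}$ and $((B^t)^*)_{pj}=b_{pj}^*$, shows that $M_{ij}$ is exactly the $(i,j)$-entry of $B^t(B^t)^*$.

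The lemma therefore reduces to the elementary statement that $\sum_{i,j}L_{ij}M_{ij}=\sum_{k}L_{kk}\cdot 1_{\B}$ holds for every $L\in M_n(\C)$ if and only if $M_{ij}=\delta_{ij}\cdot 1_{\B}$. The ``if'' direction is immediate by substitution. For ``only if,'' I test the identity against the elementary matrices $L=e_{ab}$; since $\tr_{\HH}(e_{ab})=\delta_{ab}$, this forces $M_{ab}=\delta_{ab}\cdot 1_{\B}$ for all $a,b$, which is exactly $B^t(B^t)^*=I_n$, the unitarity condition on $B^t$.

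There is no genuine obstacle here: the whole argument is a bookkeeping exercise. The only point that demands care is keeping the transposes and conjugations straight, so that the resulting relation $\sum_p b_{pi}b_{pj}^*=\delta_{ij}\cdot 1_{\B}$ is recognized as a condition on $B^t$ (and not, say, on $B$ or $\bar{B}$); that is what pins the unitarity statement onto $B^t$ rather than onto $B$ itself.
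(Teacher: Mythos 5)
The paper states this lemma without proof (it is offered as an ``elementary fact''), so there is nothing to compare against; judged on its own, your computation is correct and is certainly the intended argument. The entrywise expansion, the identification of $M_{ij}=\sum_p b_{pi}b_{pj}^*$ with $(B^t(B^t)^*)_{ij}$ under the paper's convention $(u^*)_{ij}=u_{ji}^*$, and the reduction by linearity to testing on the elementary matrices $e_{ab}$ are all sound, and you are right that keeping the transposes straight is the only delicate point.

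There is, however, one overstatement in your final sentence. What your argument actually establishes is that the trace identity holds for all $L$ if and only if $B^t(B^t)^*=\mathbb{I}_n$, i.e.\ that $B^t$ is a \emph{co-isometry}. Over a general unital $*$-algebra $\B$ this is strictly weaker than unitarity, which also demands $(B^t)^*B^t=\mathbb{I}_n$: take $n=1$, $\B=\B(\ell^2)$ and $B=(S^*)$ with $S$ the unilateral shift; then $B^t(B^t)^*=S^*S=1$, so the trace identity holds for every $L$, yet $(B^t)^*B^t=SS^*\neq 1$ and $B^t$ is not unitary. So the ``only if'' direction, read literally, requires an additional ingredient --- for instance that one-sided invertible elements of $M_n(\B)$ are invertible (automatic when $\B$ is finite-dimensional or commutative), or some standing hypothesis on $B$. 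This does not affect the paper: the lemma is invoked only in the proof of Lemma \ref{lemma:traceB}, and there only in the ``if'' direction, applied to a corepresentation whose transpose is unitary by construction. But you should either weaken the conclusion of the ``only if'' direction to ``$B^t$ is a co-isometry'' or state explicitly the hypothesis under which that upgrades to unitarity.
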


A matrix $B$ (with entries in a unital $*$-algebra $\B$) such that
both $B$ and $B^t$ are unitary is called a \emph{biunitary}
\cite{BV09}. We remark that the CQG $A_u(n):=A_u(\mathbb{I}_n)$, called the
\emph{free quantum
unitary group}, is generated by the biunitary matrix $u$ given in Def.~\ref{def:WangA}.
We refer to \cite{Wan98b} for a detailed discussion on the structure and classification of such quantum groups.

The analogue of projective unitary groups was introduced in \cite{Ban97}
(see also Sec.~3 of \cite{BV09}). Let us recall the definition.

\begin{df}\label{def:Ban}
We denote by $PA_u(n)$ the $C^*$-subalgebra of $A_u(n)$ generated
by $\{(u_{ij})^*u_{kl}:\,i,j,$ $k,l=1,\ldots,n\}$.
This is a CQG with the coproduct induced from $A_u(n)$.
\end{df}

\begin{rem}\label{remarkprojectiveversion}
The projective version of any quantum subgroup of  $ A_u ( n ) $ can be defined similarly.
\end{rem}

In \cite{Wan98a}, Wang defines the quantum automorphism group of $M_n(\C)$,
denoted by $A_{\mathrm{aut}}(M_n(\C))$ to be the universal object in the
category of CQGs with a coaction on $M_n(\C)$ preserving the trace
(and with morphisms given by CQGs homomorphisms intertwining the
coactions). The explicit definition is in Theorem 4.1 of \cite{Wan98a}.
In the following proposition we recall
 Th{\'e}or{\`e}me 1(iv) of \cite{Ban97}
(cf.~also Prop.~3.1(3) of \cite{BV09}).

\begin{prop}[\cite{Ban97,BV09}]
\label{thm:Ban}
We have $PA_u(n)\simeq A_{\mathrm{aut}}(M_n(\C))$.
\end{prop}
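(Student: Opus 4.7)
The plan is to match $A_{\mathrm{aut}}(M_n(\C))$ and $PA_u(n)$ via the adjoint coaction of $A_u(n)$ on $M_n(\C)$, whose trace-preservation is supplied by Lemma~\ref{lemma:trace} applied to the biunitary fundamental corepresentation of $A_u(n)$.

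First I would construct a surjective CQG morphism $\phi:A_{\mathrm{aut}}(M_n(\C))\twoheadrightarrow PA_u(n)$. Let $u=(u_{ij})$ be the fundamental biunitary of $A_u(n)$ regarded as an element of $M_n(\C)\otimes A_u(n)$, and set
$$
\alpha\colon M_n(\C)\longrightarrow M_n(\C)\otimes A_u(n),\qquad \alpha(L):=u(L\otimes 1)u^{*},
$$
equivalently $\alpha(e_{ij})=\sum_{k,l}e_{kl}\otimes u_{ki}u_{lj}^{*}$. A direct verification shows $\alpha$ is a unital $*$-homomorphism (from $uu^{*}=\mathbb{I}_n$) and coassociative (from $\Delta(u_{ij})=\sum_{k}u_{ik}\otimes u_{kj}$), while Lemma~\ref{lemma:trace} with $B=u$ gives $(\tr\otimes\mathrm{id})\alpha(L)=\tr(L)\cdot 1$, since the biunitarity in Def.~\ref{def:WangA} forces $u^{t}$ to be unitary. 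The matrix coefficients $u_{ki}u_{lj}^{*}$ generate the same $C^{*}$-subalgebra of $A_u(n)$ as the prescribed generators $\{u_{ij}^{*}u_{kl}\}$ of $PA_u(n)$ (both are matrix coefficients of the adjoint corepresentation), so $\alpha$ factors as a trace-preserving coaction of $PA_u(n)$ on $M_n(\C)$. The universal property of $A_{\mathrm{aut}}(M_n(\C))$ (Theorem~4.1 of \cite{Wan98a}) then yields the surjective morphism $\phi$ sending Wang's canonical generators $a_{kl,ij}$ to $u_{ki}u_{lj}^{*}$.

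For injectivity I would construct an inverse $\psi:PA_u(n)\to A_{\mathrm{aut}}(M_n(\C))$ via $u_{ki}u_{lj}^{*}\mapsto a_{kl,ij}$. Well-definedness is the assertion that the biunitarity of $u$ imposes no relations on the projective combinations $u_{ki}u_{lj}^{*}$ beyond those forced by requiring $\beta(e_{ij}):=\sum_{k,l}e_{kl}\otimes a_{kl,ij}$ to be a trace-preserving unital $*$-coaction. The main obstacle is exactly this ``no hidden relations'' claim. Two standard approaches resolve it: (i) Banica's argument in \cite{Ban97}, which proceeds by lifting the coaction $\beta$ to a biunitary $v$ inside the free product $A_{\mathrm{aut}}(M_n(\C))*C(U(1))$ satisfying $v_{ki}v_{lj}^{*}=a_{kl,ij}$, then using universality of $A_u(n)$ (Def.~\ref{def:WangA}) to produce a CQG morphism $A_u(n)\to A_{\mathrm{aut}}(M_n(\C))*C(U(1))$ whose restriction to $PA_u(n)$ lands in $A_{\mathrm{aut}}(M_n(\C))$ (the free $U(1)$-phase cancels on products $u_{ki}u_{lj}^{*}$); or (ii) a Tannakian reconstruction, matching the corepresentation categories generated by the single trace-preserving $M_n(\C)$-coaction on either side. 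Either way, the quantum counterpart of the classical identification $\mathrm{Aut}(M_n(\C))\cong PU(n)=U(n)/U(1)$, obstructed classically by the non-splitness of $1\to U(1)\to U(n)\to PU(n)\to 1$, is the core technical content.
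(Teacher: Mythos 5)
A preliminary remark on the comparison itself: the paper does not prove Proposition \ref{thm:Ban} at all --- it is quoted from Th\'eor\`eme 1(iv) of \cite{Ban97} (cf.\ Prop.~3.1(3) of \cite{BV09}) --- so your attempt can only be judged against the literature. The first half of your argument is correct: the adjoint coaction of the fundamental biunitary on $M_n(\C)$ is a unital $*$-homomorphic coaction, Lemma \ref{lemma:trace} gives trace preservation because $u^t$ is unitary, and universality of $A_{\mathrm{aut}}(M_n(\C))$ produces a surjection onto the $C^*$-subalgebra generated by the coefficients of that coaction. One small slip: with the convention of Def.~\ref{def:Ban}, $PA_u(n)$ is generated by $u_{ij}^*u_{kl}$, which are the coefficients of $\mathrm{Ad}_{\bar u}$, not of $\mathrm{Ad}_u$; the subalgebras generated by $\{u_{ki}u_{lj}^*\}$ and by $\{u_{ij}^*u_{kl}\}$ are genuinely distinct inside $A_u(n)$ (they span the coefficients of the alternating words beginning with $u$, resp.\ with $\bar u$, in the fusion semiring), although they are exchanged by the CQG automorphism $u_{ij}\mapsto u_{ij}^*$, so this is harmless once you say it.

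The genuine gap is injectivity, which you correctly flag as the core but do not establish, and your route (i) is not merely unjustified --- it fails. Suppose a unitary $v\in M_n\bigl(A_{\mathrm{aut}}(M_n(\C))*C(U(1))\bigr)$ satisfied $v_{ki}v_{lj}^*=a_{kl,ij}$, i.e.\ $v(L\otimes 1)v^*=\beta(L)$ for the universal coaction $\beta$. Passing to the abelianization $C(PU(n)\times U(1))$ of the free product, under which $\beta$ becomes the tautological coaction of $C(PU(n))$ on $M_n(\C)$, the image of $v$ would be a continuous family of unitaries $(g,t)\mapsto\dot v(g,t)\in U(n)$ with $\mathrm{Ad}_{\dot v(g,t)}=\mathrm{Ad}_g$, i.e.\ a section of the pullback of the $U(1)$-bundle $U(n)\to PU(n)$ along the first projection; since that projection itself has a section, this would trivialize $U(n)\to PU(n)$, which is impossible for $n\ge 2$ (compare $\pi_1$). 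So the free $U(1)$ factor does not remove the obstruction you yourself name, and this is not Banica's argument. The actual proof --- your route (ii) --- goes through the corepresentation theory: the fusion semiring of $A_u(n)$ (the main content of Th\'eor\`eme~1 of \cite{Ban97}) shows that the coefficients of the alternating words generate a subalgebra whose fusion rules are of $SO(3)$ type, $r\otimes r=1\oplus r\oplus r'$; the same fusion rules with the same dimensions hold for $A_{\mathrm{aut}}(M_n(\C))$; and a surjective CQG morphism inducing a bijection on classes of irreducible corepresentations with equal dimensions restricts to an isomorphism of the dense Hopf $*$-subalgebras, whence the claim. None of this representation-theoretic input is computed or even quoted in your proposal, so the isomorphism is not actually proved.
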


\begin{df}\label{def:amalg}
We denote by $Q_n(n')$ the amalgamated free product of $n$ copies of $A_u (n')$ over the common
Woronowicz $ C^* $-subalgebra $ PA_u(n')$. This is the CQG generated by the matrix entries of
$n$ biunitary matrices $u_m$ ($m=1,\ldots,n$) of size $n'$, with relations
$$
(u_m^*)_{i,j}(u_m)_{k,l}=(u_{m'}^*)_{i,j}(u_{m'})_{k,l} \qquad\forall\; i,j,k,l=1,\ldots,n',\; m,m'=1,\ldots,n\,,
$$
and with standard matrix coproduct: $\Delta((u_m)_{ij})=\sum_{k=1}^{n'}(u_m)_{ik}\otimes (u_m)_{kj}$
for all $m=1,\ldots,n$.
\end{df}

The next lemma will be needed later on.

\begin{lemma}\label{lemma:6.10}
Let $Q$ be a CQG and $X,Y\in M_N(Q)$, $N\in\N$, be matrices with entries in $Q$
satisfying
$\Delta(X_{ik})=\sum_{j=1}^NX_{ij}\otimes X_{jk}$ and 
$\Delta(Y_{ik})=\sum_{j=1}^NY_{ij}\otimes Y_{jk}$.
Let $A\in M_N(\C)$.
Then the ideal $I\subset Q$ generated by the matrix entries
of the matrix $XA-AY$ is a Woronowicz $C^*$-ideal.
\end{lemma}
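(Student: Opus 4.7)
The plan is to verify directly the defining condition of a Woronowicz $C^*$-ideal, namely $\Delta(I)\subset\ker(\pi_I\otimes\pi_I)$. Write $Z := XA - AY \in M_N(Q)$, so that by definition $I$ is the closed two-sided ideal of $Q$ generated by the entries $Z_{ik}$. Since $\ker(\pi_I\otimes\pi_I)$ is a closed two-sided ideal of $Q\otimes Q$ containing $I\otimes Q$ and $Q\otimes I$, and since the preimage of any closed two-sided ideal of $Q\otimes Q$ under the $*$-homomorphism $\Delta$ is a closed two-sided ideal of $Q$, it is enough to exhibit $\Delta(Z_{ik})\in Q\otimes I + I\otimes Q$ for every $i,k$.

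The key computation uses only the matrix-coproduct hypotheses together with the fact that the entries of $A$ are scalars and hence commute past tensor products. Starting from $\Delta(X_{ij})=\sum_l X_{il}\otimes X_{lj}$ (and analogously for $Y$) a direct expansion yields
\begin{align*}
\Delta\bigl((XA)_{ik}\bigr) &= \sum_l X_{il}\otimes (XA)_{lk}\,, \\
\Delta\bigl((AY)_{ik}\bigr) &= \sum_l (AY)_{il}\otimes Y_{lk}\,,
\end{align*}
so that $\Delta(Z_{ik}) = \sum_l X_{il}\otimes (XA)_{lk} - \sum_l (AY)_{il}\otimes Y_{lk}$.

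The whole proof then rests on a single telescoping observation: insert $\pm\sum_l X_{il}\otimes(AY)_{lk}$, noting that this equals $\sum_m (XA)_{im}\otimes Y_{mk}$ because $A$ has scalar entries. Regrouping the four resulting terms in pairs gives the identity
$$
\Delta(Z_{ik}) \;=\; \sum_l X_{il}\otimes Z_{lk} \;+\; \sum_l Z_{il}\otimes Y_{lk}\,,
$$
which manifestly lies in $Q\otimes I + I\otimes Q \subset \ker(\pi_I\otimes\pi_I)$, completing the argument.

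I do not expect a genuine obstacle here: the statement is a bookkeeping lemma whose content is essentially the one-line identity above. The only step requiring any thought is spotting the add-and-subtract trick, and once it is in place the inclusion $\Delta(I)\subset\ker(\pi_I\otimes\pi_I)$ follows immediately from the ideal property and the fact that $I$ is generated by the $Z_{ik}$.
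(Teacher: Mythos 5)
Your proposal is correct and follows essentially the same route as the paper: both reduce to the generators $Z_{ik}$ of $I$, and both establish the telescoping identity $\Delta(Z_{ik})=\sum_l X_{il}\otimes Z_{lk}+\sum_l Z_{il}\otimes Y_{lk}$ via the add-and-subtract trick using that $A$ has scalar entries, concluding that $\Delta(I)\subset Q\otimes I+I\otimes Q\subset\ker(\pi_I\otimes\pi_I)$. No gaps.
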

\begin{proof}
We now prove that $\Delta(I)\subset Q\otimes I+I\otimes Q\subset
\ker(\pi_I\otimes\pi_I)$, where $\pi_I:Q\to Q/I$ is the quotient map,
and hence $I$ is a Woronowicz $C^*$-ideal. Since $I$ is a (two-sided) ideal
and $\Delta$ a $C^*$-algebra homomorphism, it is enough to give the proof
for the generators $Z_{ij}:=\sum_{k=1}^N(X_{ik}A_{kj}-A_{ik}Y_{kj})$ of $I$.
The following algebraic identity holds
\begin{align*}
\Delta(Z_{il})&=\sum\nolimits_{j=1}^N\Delta(X_{ij}A_{jl}-A_{ij}Y_{jl})
\\
&=\sum\nolimits_{j,k=1}^N  X_{ij}\otimes X_{jk}A_{kl}
-A_{ij}Y_{jk}\otimes Y_{kl} \\
&=
\sum\nolimits_{j,k=1}^N  X_{ij}\otimes (X_{jk}A_{kl}-A_{jk}Y_{kl})
+(X_{ij}A_{jk}-A_{ij}Y_{jk})\otimes Y_{kl} \\
&=\sum\nolimits_{j=1}^N  (X_{ij}\otimes Z_{jl}+Z_{ij}\otimes Y_{jl}) \;.
\end{align*}
This concludes the proof.
\end{proof}

\subsection{Noncommutative Geometry and quantum isometries}
In noncommutative geometry, compact Riemannian spin manifolds are replaced by real spectral triples.
Recall that a \emph{unital spectral triple} $(\A,\HH,D)$ is the datum of:
a complex Hilbert space $\HH$, a  complex unital associative involutive
algebra $\A$ with a faithful unital $*$-representation $\pi:\A\to\B(\HH)$ (the representation
symbol is usually omitted), a (possibly unbounded) self-adjoint operator $D$ on $\HH$ with
compact resolvent and having bounded commutators with all $a\in\A$.
The canonical commutative example is given by $(C^\infty(M),L^2(M,S),\D)$,
where $C^\infty(M)$ is the algebra of complex-valued smooth functions on a compact Riemannian
spin manifold with no boundary, $L^2(M,S)$ is the Hilbert space of square integrable
spinors and $\D$ is the Dirac operator.

A spectral triple is \emph{even} if there is a $\Z_2$-grading $\gamma$ on $\HH$ commuting
with $\A$ and anticommuting with $D$. We will set $\gamma=1$ when the spectral triple is odd.

A spectral triple is \emph{real} if there is an antilinear isometry $J:\HH\to\HH$, called
the \emph{real structure}, such that
\begin{equation}\label{eq:yyy}
J^2=\epsilon 1\;,\qquad JD=\epsilon' DJ\;,\qquad
J\gamma=\epsilon''\gamma J\;,
\end{equation}
and
\begin{equation}\label{eq:real}
[a,JbJ^{-1}]=0\;,\qquad
[[D,a],JbJ^{-1}]=0\;,
\end{equation}
for all $a,b\in\A$~\footnote{Notice that in some examples, although not in the present case, the condition
\eqref{eq:real} has to be slightly relaxed, cf.~\cite{DDLW07,DDL06,DLPS05,DLS05}.}.
$\epsilon$, $\epsilon'$ and $\epsilon''$ are signs and determine the KO-dimension of the space \cite{Con95}.

For the finite part of the Standard Model $\epsilon = +1$, $\epsilon' = +1$, $\epsilon'' = - 1$
and the KO-dimension is $6$ \cite{CCM06}.
Imposing a few additional conditions, it is possible to reconstruct a compact
Riemannian spin manifold from any commutative real spectral triple \cite{Con08}.

In the example $(C^\infty(M),L^2(M,S),\D,J,\gamma)$ of the spectral triple associated to a compact Riemannian spin manifold $M$ with no boundary, there exists a covering group $\widetilde{G}$ of the group of orientation preserving isometries $ G $ of $M$ having a unitary representation $U$ on the Hilbert space of spinors $L^2(M,S)$ commuting with $ \D, J, \gamma $ whose adjoint action $\mathrm{Ad_U}$ on $\B(L^2(M,S))$ preserves the subalgebra $C^\infty(M)$. This picture is used to generalize the notion of isometries as follows (cf.~Def.~3 and 4 of \cite{Gos10}).

\begin{df}\label{def:qisot}
A compact quantum group $Q$ coacts by ``orientation and real structure preserving isometries'' on the spectral triple
$(\A,\HH,D,\gamma,J)$ if there is a unitary corepresentation
$U\in\M(\K(\HH)\otimes Q)$ such that
\begin{subequations}
\begin{align}
& \hspace{-3.5cm}
\text{$U$ commutes with $D\otimes 1$ and $\gamma\otimes 1$;}\label{eq:cond1} \\[4pt]
& \hspace{-3.5cm}
\text{$(J\otimes *) U(\xi\otimes 1_Q) = U(J\xi\otimes 1_Q)$ for all $\xi\in\HH$;} \label{eq:cond2} \\[4pt]
& \hspace{-3.5cm}
\text{$(\mathrm{id} \otimes\varphi)\mathrm{Ad_U} (a)\in\A''$ for all $a\in \A$ and every state $\varphi$ on $Q$,} \label{eq:cond3}
\end{align}
\end{subequations}
where $\mathrm{Ad_U} =U(\,.\,\otimes 1_Q)U^*$ is the adjoint coaction and $\A''$ is the double commutant of $\A$.
\end{df}

Note that in Definition 4 of \cite{Gos10} two antilinear operators $J$ and $\tilde J$ appear. $\tilde J$ is a generalized real structure (it is not assumed to be
an isometry) and $J$ is its antiunitary part. As in the case of this article the real structure is an antilinear isometry $J$ and $\tilde J$ coincide and hence our definition is a particular instance of Definition 4 of \cite{Gos10}.

We end this section by recalling Theorem 1 of \cite{Gos10}. Let $(\A,\HH,D,\gamma,J)$ be a real spectral triple with $ \epsilon^{\prime} = 1 $
and $\catA$ be the category with objects $ ( Q, U ) $ as in Definition \ref{def:qisot} and morphisms given by CQG morphisms intertwining the corresponding corepresentations. 
We recall that an object $(Q,U)$ in the category $\catA$ is said to be a sub-object of $(Q_0,U_0)$ in the same category if there exists a
CQG morphism $\varphi:Q_0\to Q$ such that $(id\otimes\varphi)(U_0)=U$. An object $(Q_0,U_0)$ is \emph{universal} if for any other object $(Q,U)$ in $\catA$
there exists unique such $\varphi$.

\begin{thm}[\cite{Gos10}]
The category $\catA$ has a universal object
denoted by $\widetilde{\rule{0pt}{7pt}\smash[t]{\mathrm{QISO}}}\rule{0pt}{8pt}^+(\A,\HH,D,\gamma,J)$ (or simply $\qisot(D)$)
whose unitary corepresentation, say $ U_0, $ is faithful.  The \emph{quantum isometry group}, denoted
by $\mathrm{QISO}^+(\A,\HH,D,\gamma,J)$ (or simply $\qiso(D)$),
is given by the Woronowicz $C^*$-subalgebra of $\qisot(D)$
generated by the elements 
$\inner{\xi\otimes 1, \mathrm{Ad_{U_0}} ( a )(\eta\otimes 1)}$, where $a\in\A$,
$\xi,\eta\in\HH$ and $\inner{\,,\,}$ is the $\qisot(D)$-valued inner
product on the Hilbert
module $\HH\otimes \qisot(D)$ (cf.~Def.~5 in \cite{Gos10}).
\end{thm}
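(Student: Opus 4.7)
The plan is to construct $\qisot(D)$ as the quotient of a free CQG, generated by all formal matrix coefficients of unitary corepresentations satisfying \eqref{eq:cond1}--\eqref{eq:cond3}, by the Woronowicz $C^*$-ideal encoding these constraints. Since $D$ has compact resolvent, $\HH$ decomposes as an orthogonal sum $\bigoplus_\lambda V_\lambda$ of finite-dimensional eigenspaces of $D$. For any $(Q,U)\in\catA$, the commutation with $D\otimes 1$ required in \eqref{eq:cond1} forces $U$ to restrict to a finite-dimensional unitary corepresentation $U_\lambda$ on each $V_\lambda$; combining \eqref{eq:cond1} with \eqref{eq:cond2} (the conditions on $\gamma$ and $J$) singles out, on each $V_\lambda$, a positive invertible matrix $R_\lambda$ depending only on the spectral data, such that $U_\lambda$ is the image of the fundamental corepresentation of $A_u(R_\lambda)$ under a unique CQG morphism (cf.~Def.~\ref{def:WangA}).

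Let $\mathcal{F}$ be the free product of the $A_u(R_\lambda)$'s, amalgamated along the identifications of matrix entries imposed by \eqref{eq:cond2} (in the spirit of Def.~\ref{def:amalg}). Every object of $\catA$ is then a quotient of $\mathcal{F}$ by a Woronowicz $C^*$-ideal, and I would define $\qisot(D):=\mathcal{F}/I_0$, where $I_0$ is the $C^*$-ideal of $\mathcal{F}$ generated by the remaining relations: commutation with $\gamma\otimes 1$ from \eqref{eq:cond1}, and the condition \eqref{eq:cond3}. The relations coming from \eqref{eq:cond1} are polynomial in the matrix entries of the $U_\lambda$'s and fall within the scope of Lemma \ref{lemma:6.10}. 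Condition \eqref{eq:cond3} is not of algebraic form as stated, since it quantifies over all states of $Q$; the key reformulation is the double-commutant identity $\A''=(\A')'$, which makes \eqref{eq:cond3} equivalent to $[b\otimes 1,\mathrm{Ad}_U(a)]=0$ in $\mc{M}(\K(\HH)\otimes Q)$ for every $a\in\A$ and $b\in\A'$. These are genuinely bilinear relations in the matrix entries of $U$, again amenable to a Lemma \ref{lemma:6.10}-type argument, so that $I_0$ is a Woronowicz $C^*$-ideal and $\qisot(D)$ inherits a CQG structure.

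The main obstacle will be exactly this last point: converting the state-quantified condition \eqref{eq:cond3} into an ideal generated by algebraic relations and verifying its compatibility with the coproduct. Once this is done, the tautological corepresentation $U_0$ of $\qisot(D)$ lies in $\catA$ by construction, and the universal property follows because any $(Q,U)\in\catA$ induces, via the first paragraph, a unique CQG morphism $\mathcal{F}\to Q$ that kills $I_0$ and hence descends to $\qisot(D)\to Q$ intertwining $U_0$ with $U$. Faithfulness of $U_0$ is automatic since $\qisot(D)$ is generated as a $C^*$-algebra by the matrix entries of $U_0$. For the last assertion about $\qiso(D)$, I would observe that the Woronowicz $C^*$-subalgebra generated by the matrix coefficients $\inner{\xi\otimes 1,\mathrm{Ad}_{U_0}(a)(\eta\otimes 1)}$ is precisely the largest one on which $\mathrm{Ad}_{U_0}$ restricts to a coaction on $\A$ (rather than merely on $\A''$), yielding the quantum isometry group as stated.
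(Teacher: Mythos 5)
This theorem is not proved in the paper at all: it is imported verbatim from \cite{Gos10}, so your attempt has to be measured against the construction in \cite{BG09,Gos10}. The first half of your plan does match that construction: decompose $\HH$ into the finite-dimensional eigenspaces $V_\lambda$ of $D$, note that \eqref{eq:cond1} forces any $U$ to restrict to a finite-dimensional unitary corepresentation on each $V_\lambda$ (refined by the $\gamma$-eigenspaces), that \eqref{eq:cond2} forces each $U_\lambda$ to satisfy the defining relations of $A_u(R_\lambda)$ for an explicit $R_\lambda$ built from $J|_{V_\lambda}$, and take the free product over $\lambda$. (One correction: no amalgamation occurs at this stage, since $J$ and $\gamma$ preserve each $V_\lambda$; the plain free product of the $A_u(R_\lambda)$ is already the universal object for the subcategory cut out by \eqref{eq:cond1}--\eqref{eq:cond2} alone.)

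The genuine gap is the point you yourself flag, and your proposed resolution does not work. The reformulation of \eqref{eq:cond3} as $[b\otimes 1,\mathrm{Ad_U}(a)]=0$ for all $a\in\A$, $b\in\A'$ is correct (states span $Q^*$), but these relations are not of the intertwiner form $XA=AY$ treated in Lemma~\ref{lemma:6.10}: they assert that the adjoint corepresentation maps the subspace $\A\subset\B(\HH)$ into the larger subspace $\A''$. An arbitrary linear relation on the entries of a corepresentation matrix does \emph{not} generate a Woronowicz $C^*$-ideal --- for instance the ideal of $A_u(2)$ generated by $u_{11}$ is not one, since $\Delta(u_{11})=u_{11}\otimes u_{11}+u_{12}\otimes u_{21}$. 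For relations of the type ``$W$ maps $P_0\HH$ into $P\HH$'' the coproduct computation closes up only when $P_0=P$; here $P_0$ (projection onto $\A$) is in general strictly smaller than $P$ (projection onto $\A''$), the cross terms in $\Delta$ of your generators are not visibly in $I\otimes Q+Q\otimes I$, and in infinite dimensions $\mathrm{Ad_U}$ does not split into finite blocks compatible with $\A$, while $\A''$ is only a weak closure. This is precisely why \cite{BG09,Gos10} abandon generators-and-relations at this step: they consider the family of all Woronowicz $C^*$-ideals $I$ of the free-product object whose quotients satisfy \eqref{eq:cond3}, prove this family is stable under intersection, and define $\qisot(D)$ as the quotient by its minimum. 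Your argument would need either that lattice argument or an actual verification that the specific ideal you write down is a Woronowicz ideal; as written, the decisive step is asserted rather than proved. (Two smaller points: faithfulness of $U_0$ is indeed automatic by construction; and the subalgebra defining $\qiso(D)$ is the \emph{smallest} Woronowicz $C^*$-subalgebra over which $\mathrm{Ad_{U_0}}$ restricts to a coaction, not the largest --- that it is a Woronowicz subalgebra at all follows from coassociativity of $\mathrm{Ad_{U_0}}$ and should be checked.)
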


$\qisot(D)$ is the quantum analogue of
the covering $\widetilde{G}$ of the classical group $G$ of orientation preserving isometries of a spin manifold $M$. It's projective version (in the sense of Sec.~3 of \cite{BV09}) is the quantum group $\qiso ( D )$, which is the quantum analogue of $G$.

%%% ======================================================================
\section{Quantum isometries of the internal non-commutative space of the Standard Model}\label{sec:tre}
\subsection{The finite non-commutative space $F$}\label{sec:SM}

The spectral triple $(A_F,H_F,D_F,\gamma_F,J_F)$
describing the internal space $F$ of the Standard Model is defined as follows
(cf.~\cite{CM08} and references therein).
The algebra $A_F$ is
\begin{equation}\label{eq:AF}
A_F:=\C\oplus\Q\oplus M_3(\C) \;,
\end{equation}
where we identify $\Q$ with the real subalgebra of
$M_2(\C)$ with elements
\begin{equation}\label{eq:XXX}
q=\begin{mat}
\alpha & \beta \\
-\bar\beta & \bar\alpha
\end{mat}
\end{equation}
for $\alpha,\beta\in\C$ (cf.~Cayley-Dickson construction).

Let us denote by $\C[v_1,\ldots,v_k]\simeq\C^k$ the vector space with basis
$v_1,\ldots,v_k$. For our convenience, we adopt the following notation for the
Hilbert space $H_F$. It can be written as a tensor product
$$
H_F:=\C^2\otimes\C^4\otimes\C^4\otimes\C^n \;,
$$
where, in the notations of~\cite{CM08}, we have
\begin{itemize}\itemsep=0pt
\item[i)] the first two factors $\C^2\otimes\C^4$ with
$$
\C^2=\C[\uparrow,\downarrow] \;,\qquad
\C^4=\C[\ell,\{q_c\}_{c=1,2,3}]  \;,
$$
where $\uparrow$ and $\downarrow$ stand for weak isospin up and down,
$\ell$ and $q_c$ stand for lepton and quark of color $c$ respectively.
These may be combined into
$$
\C^8=\C[\nu,e,\{u_c,d_c\}_{c=1,2,3}] \;,
$$
where
$\nu$ stands for ``neutrino'', $e$ for ``electron'', $u_c$ and $d_c$ for
quarks with weak isospin $+1/2$ and $-1/2$ respectively and of color $c$.
Explicitly, the isomorphism $\C^2\otimes\C^4\to\C^8$
is the map
$$
{\uparrow}\otimes\ell\mapsto \nu\;,\quad
{\downarrow}\otimes\ell\mapsto e\;,\quad
{\uparrow}\otimes q_c\mapsto u_c\;,\quad
{\downarrow}\otimes q_c\mapsto d_c\;.
$$

\item[ii)] a factor
$$
\C^4=\C[p_L,\bar p_R,\bar p_L,p_R] \;,
$$
where $L,R$ stand for the two chiralities, $p$ for ``particle'' and
$\bar p$ for ``antiparticle'';

\item[iii)] a factor $\C^n$ since each particle
comes in $n$ generations. Presently only $3$ generations have been observed,
but for the sake of generality we will work with an arbitrary
$n\geq 3$.
\end{itemize}
From a physical point of view, rays (lines through the origin) of $H_F$ are states describing the internal degrees of freedom of the elementary fermions.
The \emph{charge conjugation} $J_F$ changes a particle into its antiparticle,
and is the composition of the componentwise
complex conjugation on $H_F$ with the linear operator
\begin{equation}\label{eq:J0}
J_0:=
1\otimes 1\otimes {\footnotesize
\begin{pmatrix}
0 & 0 & 1 & 0 \\
0 & 0 & 0 & 1 \\
1 & 0 & 0 & 0 \\
0 & 1 & 0 & 0
\end{pmatrix}}
\otimes 1 \;.
\end{equation}
The grading is
$$
\gamma_F:=
1\otimes 1\otimes
\mathrm{diag}(1,1,-1,-1)
\otimes 1 \;.
$$
The element $a=(\lambda,q,m)\in A_F$ (with $\lambda\in\C$, $q\in\Q$ and $m\in M_3(\C)$) is represented by
\begin{align}
\pi(a) &=
q\otimes 1\otimes e_{11}\otimes 1 +
{\small\begin{mat}
\lambda & 0 \\ 0 & \bar\lambda
\end{mat}}\otimes 1
\otimes e_{44}\otimes 1 \notag \\[2pt]  & \qquad +
1\otimes
{\small
\begin{pmatrix}
\lambda & 0 & 0 & 0 \\
0 \\
0 && {\normalsize m} \\
0
\end{pmatrix}}
\otimes (e_{22}+e_{33})\otimes 1 \;,\label{eq:repPi}
\end{align}
where $m$ is a $3\times 3$ block and
$\{e_{ij}\}_{i,j=1,\ldots,k}$ is the canonical basis
of $M_k(\C)$ ($e_{ij}$ is the matrix with $1$ in the $(i,j)$-th
position and $0$ everywhere else). In particular, in \eqref{eq:repPi}
$e_{11}$ projects on the space $\C[p_L]$ of particles with left chirality,
$e_{22}$ on $\C[\bar p_R]$, $e_{33}$ on $\C[\bar p_L]$ and
$e_{44}$ on $\C[p_R]$.

The Dirac operator is
\begin{align}
D_F &:=e_{11}\otimes e_{11}\otimes
\begin{mfour}
0 & 0 & 0 & \Upsilon_\nu \\
0 & 0 & \Upsilon_\nu^t & \Upsilon_R \\
0 & \bar\Upsilon_\nu & 0 & 0 \\
\Upsilon_\nu^* & \Upsilon_R^* & 0 & 0
\end{mfour}
+e_{11}\otimes (1-e_{11})\otimes
\begin{mfour}
0 & 0 & 0 & \Upsilon_u \\
0 & 0 & \Upsilon_u^t & 0 \\
0 & \bar\Upsilon_u & 0 & 0 \\
\Upsilon_u^* & 0 & 0 & 0
\end{mfour}
\notag\\[5pt] & \quad
+e_{22}\otimes e_{11}\otimes
\begin{mfour}
0 & 0 & 0 & \Upsilon_e \\
0 & 0 & \Upsilon_e^t & 0 \\
0 & \bar\Upsilon_e & 0 & 0 \\
\Upsilon_e^* & 0 & 0 & 0
\end{mfour}
+e_{22}\otimes (1-e_{11})\otimes
\begin{mfour}
0 & 0 & 0 & \Upsilon_d \\
0 & 0 & \Upsilon_d^t & 0 \\
0 & \bar\Upsilon_d & 0 & 0 \\
\Upsilon_d^* & 0 & 0 & 0
\end{mfour} \;,
\label{eq:Dirac}
\end{align}
where each of the $\Upsilon$ matrices are in $M_n(\C)$, $\bar{m}:=(m^*)^t$ is the matrix
obtained from $m$ by conjugating each entry, and
we identify $\B(H_F)=
M_2(\C)\otimes M_4(\C)\otimes \big(M_4(\C)\otimes M_n(\C)\big)$
with $M_2(\C)\otimes M_4(\C)\otimes M_{4n}(\C)$ by writing
$M_{4n}(\C)$ as a $4\times 4$ matrix with entries in $M_n(\C)$; in particular
$e_{ij}\otimes m\in M_4(\C)\otimes M_n(\C)$ will be the matrix
with the $n\times n$ block $m$ in position $(i,j)$.

The matrix $\Upsilon_R$ is symmetric, the other $\Upsilon$ matrices are positive.
Their physical meaning is explained in section 17.4 of \cite{CM08}:
for $x=e,u,d$ the eigenvalues of $\Upsilon_x^*\Upsilon_x$ give the square of
the masses of the $n$ generations of the particle $x$;
the eigenvalues of $\Upsilon^*_\nu\Upsilon_\nu$ give the Dirac masses of neutrinos;
the eigenvalues of $\Upsilon^*_R\Upsilon_R$ give the Majorana masses of neutrinos.

If we replace a spectral triple with one that is unitary equivalent we do not change the
symmetries. From Theorem 1.187(3) (and analogously to Lemma 1.190) of \cite{CM08}
it follows that, modulo an unitary equivalence, we can diagonalize one element of each pair
$(\Upsilon_\nu,\Upsilon_e)$ and $(\Upsilon_u,\Upsilon_d)$. We choose to diagonalize
$\Upsilon_u$ and $\Upsilon_e$. 

Thus, we make the following hypothesis on the $\Upsilon$ matrices:
\begin{itemize}
\item $\Upsilon_u$ and $\Upsilon_e$ are positive, diagonal
and their eigenvalues are non-zero.
\item
$\Upsilon_d$ and $\Upsilon_\nu$ are positive,
the eigenvalues of $\Upsilon_d$ are non-zero.
Let us denote by $C$ the $SU(n)$ matrices such that 
$\Upsilon_d=C \delta_{\downarrow}C^*$, where $\delta_{\downarrow}$
is a diagonal matrix with non-negative eigenvalues.
$C$ is the so-called Cabibbo-Kobayashi-Maskawa matrix,
responsible for the quark mixing, cf.~Sec.~9.3 of \cite{CM08}.
Similarly the unitary diagonalizing $\Upsilon_\nu$ is the so-called Pontecorvo-Maki-Nakagawa-Sakata matrix,
responsible for the neutrino mixing, cf.~Sec.~9.6 of \cite{CM08}.
\item $\Upsilon_R$ is symmetric.
\item
For physical reasons, we assume that: $\Upsilon_x$ and
$\Upsilon_y$ have distinct eigenvalues, for all $x,y\in\{\nu,e,u,d\}$
with $x\neq y$; eigenvalues of $\Upsilon_e$, $\Upsilon_u$ and $\Upsilon_d$
are non-zero and with multiplicity one.
\end{itemize}

\begin{rem} \label{Upsilon_positive}
We will often use the fact that $  \Upsilon^t= \bar{ \Upsilon} $
for any positive matrix $\Upsilon$.
\end{rem}

%%% ======================================================================

\subsection{Quantum isometries of $F$}\label{sec:iso}

Since the definition of quantum isometry group is given for spectral triples over complex $\ast$-algebras, we first need
to explain how to canonically associate one to any spectral triple over a real $\ast$-algebra.

\begin{lemma}\label{lemma:spectral}
To any real spectral triple $(\A,\HH,D,\gamma,J)$ over a real $*$-algebra $\A$
we can associate a real spectral triple $(\B,\HH,D,\gamma,J)$ over
the complex $*$-algebra $\B\simeq\A_{\C}/\ker\pi_{\C}$, where
$\A_{\C}\simeq\A\otimes_{\R}\C$ is the complexification of $\A$,
with conjugation defined by $ ( a \otimes_{\R} z )^* = a^* \otimes_{\R} \bar{z}  $ for $ a \in \A $ and $ z \in \C$,
and $\pi_{\C}:\A_{\C}\to\B(\HH)$ is the $*$-representation
\begin{equation}\label{eq:piC}
\pi_{\C}(a\otimes_{\R}z)=z\pi(a) \;,\qquad a\in\A\,,\;z\in\C\,.
\end{equation}
\end{lemma}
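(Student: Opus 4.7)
The plan is to check the statement in three passes: first that $\A_{\C}$ is a complex $*$-algebra and $\pi_{\C}$ a $*$-representation (so that $\B$ inherits a complex $*$-algebra structure and a faithful $*$-representation on $\HH$), then that the data $(D,\gamma,J)$ still satisfy the axioms of a real spectral triple with respect to $\B$. The crucial point will be to handle the antilinearity of $J$ when extending the commutation relations \eqref{eq:real} from $\A$ to $\B$; everything else is essentially $\C$-linearity.

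First I would verify that $\A_{\C}=\A\otimes_{\R}\C$ equipped with the product $(a_1\otimes z_1)(a_2\otimes z_2)=a_1a_2\otimes z_1z_2$ and involution $(a\otimes_{\R}z)^*=a^*\otimes_{\R}\bar z$ is a unital complex $*$-algebra. Then I would check that the map $\pi_{\C}$ in \eqref{eq:piC} is a unital $*$-homomorphism: multiplicativity is immediate from $\pi_{\C}(a_1a_2\otimes z_1z_2)=z_1z_2\pi(a_1)\pi(a_2)$, and the $*$-property follows from $\pi_{\C}(a^*\otimes \bar z)=\bar z\pi(a)^*=(z\pi(a))^*$. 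Hence $\B:=\A_{\C}/\ker\pi_{\C}$ is a unital complex $*$-algebra and $\pi_{\C}$ descends to a faithful unital $*$-representation $\B\hookrightarrow\B(\HH)$. I will tacitly identify $\B$ with its image $\pi_{\C}(\A_{\C})\subset\B(\HH)$.

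Next, each $b\in\B$ is a finite $\C$-linear combination $b=\sum_k z_k\pi(a_k)$ of elements $\pi(a_k)$ with $a_k\in\A$. Since $[D,\pi(a_k)]$ is bounded by assumption and the commutator is $\C$-linear in its second argument, $[D,b]=\sum_k z_k[D,\pi(a_k)]$ is bounded, giving the spectral triple axiom. The grading identity $\gamma b=b\gamma$ and the anticommutation $D\gamma=-\gamma D$ follow by $\C$-linear extension from the corresponding statements for $\A$, while the KO-sign relations in \eqref{eq:yyy} involve only $J$, $D$ and $\gamma$ and are therefore unaffected.

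The last thing to check is condition \eqref{eq:real} for pairs $b,c\in\B$. Writing $b=\sum_k z_k\pi(a_k)$ and $c=\sum_l w_l\pi(a'_l)$, antilinearity of $J$ gives
\[
JcJ^{-1}=\sum\nolimits_l \bar w_l\,J\pi(a'_l)J^{-1},
\]
so that
\[
[b,JcJ^{-1}]=\sum\nolimits_{k,l} z_k\bar w_l\,[\pi(a_k),J\pi(a'_l)J^{-1}]=0
\]
by \eqref{eq:real} applied to $a_k,a'_l\in\A$, and likewise $[[D,b],JcJ^{-1}]=\sum_{k,l} z_k\bar w_l\,[[D,\pi(a_k)],J\pi(a'_l)J^{-1}]=0$. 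The bilinear (rather than sesquilinear) dependence on the scalars $z_k,\bar w_l$ is irrelevant: each summand already vanishes, so there is no obstruction to extending from $\A$ to $\B$. The only place where a little care is needed is precisely this antilinear behaviour of $J$ on the $\C$-factor — and as the computation shows, it does not spoil \eqref{eq:real}. This completes the verification that $(\B,\HH,D,\gamma,J)$ is a real spectral triple.
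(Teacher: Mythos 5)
Your verification is correct and complete. The paper actually states this lemma without proof, treating it as a routine check, so there is nothing to compare against step by step; your write-up supplies exactly the verification that is being taken for granted. You correctly identified the only point requiring any care --- that $J$ is antilinear, so conjugating $c=\sum_l w_l\pi(a'_l)$ produces the coefficients $\bar w_l$ --- and observed, rightly, that this is harmless because each commutator $[\pi(a_k),J\pi(a'_l)J^{-1}]$ and $[[D,\pi(a_k)],J\pi(a'_l)J^{-1}]$ already vanishes termwise by \eqref{eq:real} for the real algebra; all remaining axioms (bounded commutators, the grading relations, the sign relations \eqref{eq:yyy}, compact resolvent) either extend by $\C$-linearity or do not involve the algebra at all.
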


\noindent
Notice that $\,\ker\pi_{\C} $ may be nontrivial since the representation $\pi_{\C}$ is not always faithful. For example, if
$\A$ is itself a complex $*$-algebra (every complex $*$-algebra is also a
real $*$-algebra) and $\pi$ is complex linear, then for any $a\in\A$ the element
$\,a\otimes_{\R}1+ia\otimes_{\R}i\,$ of $\A_{\C}$
is in the kernel of $\pi_{\C}$.
This happens in the Standard Model case, where the complexification of $A_F=\C\oplus\Q\oplus M_3(\C)$
is the algebra $(A_F)_{\C}:=\C\oplus\C\oplus M_2(\C)\oplus M_3(\C)\oplus M_3(\C)$,
where we have used the complex
$*$-algebra isomorphism $M_n(\C)\otimes_{\R}\C\to M_n(\C)\oplus M_n(\C)$
given by
$$
m\otimes_{\R}z \mapsto (mz,\bar m z)
$$
having inverse
\begin{equation}\label{eq:isoinv}
(m,m')\mapsto
\tfrac{m+\bar m'}{2}\otimes_{\R}1
+\tfrac{m-\bar m'}{2i}\otimes_{\R}i
\end{equation}
for all $m,m'\in M_n(\C)$, $z\in\C$.

Using \eqref{eq:piC}, \eqref{eq:isoinv} and \eqref{eq:repPi} we
get $\pi_{\C}(\lambda,\lambda',q,m,m')=\inner{\lambda,\lambda',q,m}$,
where
\begin{align}
\inner{\lambda,\lambda',q,m} &:=
q\otimes 1\otimes e_{11}\otimes 1 +
{\small\begin{mat}
\lambda & 0 \\ 0 & \lambda'
\end{mat}}\otimes 1
\otimes e_{44}\otimes 1 \notag\\[2pt]  & \qquad +
1\otimes
{\small
\begin{pmatrix}
\lambda & 0 & 0 & 0 \\
0 \\
0 && {\normalsize m} \\
0
\end{pmatrix}}
\otimes (e_{22}+e_{33})\otimes 1 \;.
\label{eq:B}
\end{align}
The complex $\ast$-algebra $B_F:=(A_F)_{\C}/\ker\pi_{\C}$ is simply
the algebra $B_F\simeq \C\oplus\C\oplus M_2(\C)\oplus M_3(\C)$
with elements $\inner{\lambda,\lambda',q,m}$.
With $A_F$ replaced by $B_F,$ we can now study quantum isometries.

We notice that in the case of the spectral triple of the internal part of the Standard Model, the conditions (\ref{eq:cond2}-\ref{eq:cond3}) are equivalent to
\begin{subequations}
\begin{align}
& \hspace{-3.5cm}
(J_0\otimes 1)\bar U=U(J_0\otimes 1) \;; \label{eq:cond5} \\[4pt]
& \hspace{-3.5cm}
\mathrm{Ad_U}(B_F)\subset B_F\otimes_{\mathrm{alg}}Q\;; \label{eq:cond6}
\end{align}
\end{subequations}
with $J_0$ given by \eqref{eq:J0}.
The equivalence between \eqref{eq:cond2} and \eqref{eq:cond5} is an immediate
consequence of the definition of $J_F$.
The equivalence between \eqref{eq:cond3} and \eqref{eq:cond6} follows from the equality of
$B_F''$ and $B_F,$ since the latter is a finite-dimensional $C^*$-algebra.

We need a preparatory lemma before our main proposition.

\begin{lemma}\label{prop:mainA}
Let $Q$ be the universal $C^*$-algebra generated by
unitary elements $x_k$ ($k=0,\ldots,n$),
the matrix entries of $3\times 3$ biunitaries
$T_m$ ($m=1,\ldots,n$) and of an $n\times n$
biunitary $V$, with relations
\begin{subequations}\label{eq:qisot}
\begin{gather}
\mathrm{diag}(x_0x_1,...,x_0x_n)\Upsilon_\nu =\Upsilon_\nu \mathrm{diag}(x_0x_1,...,x_0x_n)=
\bar V\Upsilon_\nu=\Upsilon_\nu\bar V\,,
 \qquad
V\Upsilon_R =\Upsilon_R\hspace{1pt}\bar V \,, \label{eq:qisotA} \\[4pt]
\sum\nolimits_{m=1}^nC_{rm}\bar C_{sm}(T_m)_{j,k}=0 \,,\qquad\forall\;r\neq s\,,\quad (\,r,s=1,\ldots,n;\,j,k=1,2,3\,) \label{eq:qisotC} \\[4pt]
(T_m^*)_{i,j}(T_m)_{k,l}=(T_{m'}^*)_{i,j}(T_{m'})_{k,l} \,,\qquad\forall\;m,m'\,,\quad (\,i,j,k,l=1,2,3,\; m,m'=1,\ldots,n\,)
\label{eq:qisotB}
\end{gather}
\end{subequations}
where $C=((C_{r,s}))$ is the CKM matrix.
Then $Q$ with matrix coproduct
\begin{equation}\label{eq:zzz}
\Delta(x_k)=x_k\otimes x_k \,,\quad
\Delta((T_m)_{ij})=\sum_{l=1,2,3}
(T_m)_{il}\otimes (T_m)_{lj} \,,\quad
\Delta(V_{ij})=\sum_{l=1,\ldots,n}
V_{il}\otimes V_{lj} \,,
\end{equation}
is a quantum subgroup of the free product
\begin{equation}\label{eq:qsubG}
\underbrace{C(U(1))*C(U(1))*\ldots*C(U(1))}_{n+1}
\,*\,\,Q_n(3) *A_u(n) \;.
\end{equation}
The Woronowicz $C^*$-ideal of \eqref{eq:qsubG} defining $Q$
is determined by the relations \eqref{eq:qisotA} and \eqref{eq:qisotC}.
\end{lemma}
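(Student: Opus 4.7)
The plan is to present $Q$ as a quotient of the free product $P := C(U(1))^{*(n+1)}*Q_n(3)*A_u(n)$ by a $C^*$-ideal $J$ generated by the matrix entries of \eqref{eq:qisotA} and \eqref{eq:qisotC}, and then to prove that $J$ is a Woronowicz $C^*$-ideal of $P$ so that the matrix coproduct \eqref{eq:zzz} descends to $Q$. The universal property of the free product, together with the universal properties of its factors (which already encode biunitarity and the amalgamation relation \eqref{eq:qisotB}), makes the first identification automatic once we read off $x_0,\ldots,x_n$, the $T_m$'s and $V$ as the canonical generators of the corresponding factors. All the content therefore lies in showing that $J$ is a Woronowicz ideal.

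Before invoking Lemma~\ref{lemma:6.10}, I would record that each of the relevant matrices carries the standard matrix coproduct. The diagonal matrix $D:=\mathrm{diag}(x_0x_1,\ldots,x_0x_n)$ satisfies $\Delta(D_{ij})=\sum_k D_{ik}\otimes D_{kj}$ because its nonzero entries are group-like; the matrices $V$, $\bar V$ and each $T_m$ satisfy the analogous relation by the very definitions of $A_u(n)$ and $Q_n(3)$ (the $\bar V$ case by taking adjoints). With this in hand, Lemma~\ref{lemma:6.10} disposes of most of \eqref{eq:qisotA} directly: apply it with $(X,Y,A)$ successively equal to $(D,D,\Upsilon_\nu)$, $(D,\bar V,\Upsilon_\nu)$, $(\bar V,\bar V,\Upsilon_\nu)$ and $(V,\bar V,\Upsilon_R)$. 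The remaining equality $D\Upsilon_\nu=\bar V\Upsilon_\nu$ inside the chain \eqref{eq:qisotA} is an algebraic consequence of these (combine $D\Upsilon_\nu=\Upsilon_\nu\bar V$ with $\Upsilon_\nu\bar V=\bar V\Upsilon_\nu$), so no separate argument for it is required.

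The main obstacle is \eqref{eq:qisotC}, because its entries $\sum_m C_{rm}\bar C_{sm}(T_m)_{jk}$ do not a priori sit in the $XA-AY$ shape demanded by Lemma~\ref{lemma:6.10}. The trick I have in mind is to assemble all the $T_m$'s into a single $3n\times 3n$ matrix $T:=\sum_m e_{mm}\otimes T_m$ and then twist by the CKM matrix, setting $\tilde T:=(C\otimes\mathbb{I}_3)\,T\,(C^*\otimes\mathbb{I}_3)$. A short computation using $C^*C=\mathbb{I}_n$ shows that $\tilde T$ inherits the standard matrix coproduct, while a direct expansion identifies the entry of $\tilde T$ in position $((r,i),(s,j))$ with $\sum_m C_{rm}\bar C_{sm}(T_m)_{ij}$. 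Thus \eqref{eq:qisotC} is exactly the condition that $\tilde T$ be block-diagonal with respect to the tensor splitting $\C^n\otimes\C^3$. Choosing a scalar matrix $A_0:=\mathrm{diag}(\lambda_1,\ldots,\lambda_n)\otimes\mathbb{I}_3$ with pairwise distinct $\lambda_r\in\C$, the entries of $\tilde T A_0-A_0\tilde T$ are $(\lambda_a-\lambda_b)\tilde T_{(ai),(bj)}$, which generate the same ideal as the off-block-diagonal entries of $\tilde T$. Lemma~\ref{lemma:6.10} applied with $X=Y=\tilde T$ and $A=A_0$ then shows that this ideal is a Woronowicz ideal.

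Since the sum of Woronowicz $C^*$-ideals is again Woronowicz (immediate from $\Delta(I_1+I_2)\subset P\otimes(I_1+I_2)+(I_1+I_2)\otimes P$), $J$ is a Woronowicz ideal of $P$. Therefore $P/J$ is a CQG with the induced matrix coproduct, and by construction it coincides with $Q$, presenting $Q$ as a quantum subgroup of \eqref{eq:qsubG} with defining ideal generated by \eqref{eq:qisotA} and \eqref{eq:qisotC}.
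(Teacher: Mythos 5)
Your proposal is correct, and for the relations \eqref{eq:qisotA} it follows the paper's proof exactly (repeated applications of Lemma~\ref{lemma:6.10} with $A\in\{\Upsilon_\nu,\Upsilon_R\}$ and $X,Y$ among $\mathrm{diag}(x_0x_1,\ldots,x_0x_n)$, $V$, $\bar V$); in fact you are slightly more careful than the paper in listing which pairs $(X,Y)$ are needed and in observing that $\bar V$, not just $V$, carries the standard matrix coproduct. Where you genuinely diverge is in the treatment of \eqref{eq:qisotC}. The paper does \emph{not} route this through Lemma~\ref{lemma:6.10}: it computes $\Delta$ directly on the generators $X_{r,s,j,k}:=\sum_m C_{rm}\bar C_{sm}(T'_m)_{j,k}$ and shows, using $\sum_p\bar C_{pm}C_{pm'}=\delta_{mm'}$, that $\Delta(X_{r,s,j,k})=\sum_{l,p}X_{r,p,j,l}\otimes X_{p,s,l,k}$, so that each summand has at least one tensor factor among the generators with $p\neq r$ or $p\neq s$ and hence $\Delta(I)\subset Q\otimes I+I\otimes Q$. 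Your conjugation trick --- assembling $T=\sum_m e_{mm}\otimes T_m$, passing to $\tilde T=(C\otimes\mathbb{I}_3)T(C^*\otimes\mathbb{I}_3)$, and commuting against $A_0=\mathrm{diag}(\lambda_1,\ldots,\lambda_n)\otimes\mathbb{I}_3$ with distinct $\lambda_r$ --- is valid: conjugation by a scalar unitary preserves the matrix-coproduct property (this is exactly where $C^*C=\mathbb{I}_n$ enters, the same identity the paper uses), the entries of $\tilde T$ in position $((r,i),(s,j))$ are precisely $\sum_m C_{rm}\bar C_{sm}(T_m)_{ij}$, and the entries of $\tilde TA_0-A_0\tilde T$ are the nonzero multiples $(\lambda_s-\lambda_r)\tilde T_{(r,i),(s,j)}$, which generate the same ideal as the off-block-diagonal entries. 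What your route buys is uniformity (every defining relation becomes an instance of Lemma~\ref{lemma:6.10}); what the paper's direct computation buys is brevity and the explicit observation that the family $X_{r,s,j,k}$ itself satisfies a corepresentation-type coproduct identity. Your closing remark that a (closed) sum of Woronowicz $C^*$-ideals is again a Woronowicz $C^*$-ideal is a standard fact and is used implicitly in the paper as well.
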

\begin{proof}
$Q_n(3)$ is by definition generated by $3\times 3$ biunitaries $T'_m$ ($m=1,\ldots,n$)
with the relation \eqref{eq:qisotB},
$A_u(n)$ is generated by the matrix entries of a $n\times n$ biunitary $V'$, and 
$C(U(1))*C(U(1))*\ldots*C(U(1))$ is freely generated by unitary
elements $x'_k$ ($k=0,\ldots,n$).
The map $T'_m\mapsto T_m$, $V'\mapsto V$ and $x_k'\mapsto x_k$ defines a surjective
$C^*$-algebra morphism from the CQG in \eqref{eq:qsubG} to $Q$.

From Lemma \ref{lemma:6.10}, it follows that the kernel of the morphism $(V',x_k')\mapsto (V,x_k)$
is a Woronowicz $C^*$-ideal, i.e.~the relations \eqref{eq:qisotA} define a quantum
subgroup of $C(U(1))*C(U(1))*\ldots*C(U(1))*A_u(n)$ (apply the Lemma to $A=\Upsilon_\nu$ and
$X,Y\in\{\mathrm{diag}(x_0x_1,...,x_0x_n),V\}$).

It remains to prove that the kernel $I$ of the morphism $T_m'\mapsto T_m$ is also a
Woronowicz $C^*$-ideal, i.e.~the quotient of $Q_n(3)$ by the relation \eqref{eq:qisotC}
is a CQG. The ideal $I$ is generated by the elements 
$X_{r,s,j,k}:=\sum\nolimits_{m=1}^nC_{rm}\bar C_{sm}(T'_m)_{j,k}$ for
all $j,k=1,2,3$, $r,s=1,\ldots,n$ and $r\neq s$. An easy computation shows that
\begin{align*}
\Delta(X_{r,s,j,k}) &=
\sum\nolimits_{m=1}^n\sum\nolimits_{l=1}^3C_{rm}\bar C_{sm}(T'_m)_{j,l}\otimes (T'_m)_{l,k} \\
&=
\sum\nolimits_{l,p=1}^3
\sum\nolimits_{m=1}^nC_{rm}\bar C_{pm}(T'_m)_{j,l}
\otimes 
\sum\nolimits_{m'=1}^nC_{pm'}\bar C_{sm'}(T'_{m'})_{l,k} \\
&=
\sum\nolimits_{l,p=1}^3 X_{r,p,j,l}\otimes X_{p,s,l,k} \;,
\end{align*}
where the second equality follows from
$\sum_{p=1}^3\bar C_{pm}C_{pm'}=(C^*C)_{mm'}=\delta_{mm'}$ (recall
that $C$ is a unitary matrix).
Hence $\Delta(I)\subset I\otimes I$, so that $I$ is a Woronowicz $C^*$-ideal.
This concludes the proof.
\end{proof}

\begin{prop}\label{prop:main}
The universal object $\qisot(D_F)$ of the category $\catA$ is given by the CQG in 
Lemma~\ref{prop:mainA} with corepresentation
\begin{align}
U\;= & \;\;
e_{11}\otimes e_{11}\otimes e_{11}\otimes \sum^n_{k = 1} e_{kk}\otimes x_0 x_k
+e_{22}\otimes e_{11}\otimes (e_{11}+e_{44})\otimes \sum^n_{k = 1} e_{kk}\otimes x_k
\notag\\[2pt]
& +
e_{11}\otimes e_{11}\otimes e_{33} \otimes \sum^n_{k = 1} e_{kk}\otimes x^*_k x^*_0
+e_{22}\otimes e_{11}\otimes (e_{22}+e_{33})\otimes \sum^n_{k = 1} e_{kk}\otimes x^*_k
\notag\\[2pt]
& +
e_{11}\otimes e_{11}\otimes e_{22}\otimes\sum^n_{j,k = 1}e_{jk}\otimes ( V )_{jk}
+e_{11}\otimes e_{11}\otimes e_{44} \otimes \sum^n_{j,k = 1} e_{jk}\otimes (\bar{V} )_{jk}
\notag\\[2pt]
& +
e_{11}\otimes \sum_{j,k = 1,2,3} e_{j+1,k+1}\otimes (e_{11}+e_{44}) \otimes \sum^n_{m = 1} e_{mm}\otimes  (T_m)_{j,k}
\notag\\[2pt]
& +
e_{22}\otimes \sum_{j,k = 1,2,3} e_{j+1,k+1}\otimes (e_{11}+e_{44}) \otimes \sum^n_{m = 1} e_{mm}\otimes x^*_0 (T_m)_{j,k}
\notag\\[2pt]
& +
e_{11}\otimes \sum_{j,k = 1,2,3} e_{j+1,k+1}\otimes (e_{22}+e_{33}) \otimes \sum^n_{m = 1} e_{mm}\otimes (\bar T_m)_{j,k}
\notag\\[2pt]
& +
e_{22}\otimes \sum_{j,k = 1,2,3} e_{j+1,k+1}\otimes (e_{22}+e_{33}) \otimes \sum^n_{m = 1} e_{mm}\otimes (\bar T_m)_{j,k} x_0 \;.
\label{eq:Ufourth}
\end{align}
$\qisot(D_F)$ coacts trivially on the two summands $\C$ of
$B_F=\C\oplus\C\oplus M_2(\C)\oplus M_3(\C)$, while
on the remaining summands the coaction is
\begin{subequations}\label{eq:Ucoact}
\begin{align}
\alpha(\inner{0,0,e_{ii},0}) &= \inner{0,0,e_{ii},0}\otimes 1 \;, \\
\alpha(\inner{0,0,e_{12},0}) &= \inner{0,0,e_{12},0}\otimes x_0 \;, \\
\alpha(\inner{0,0,e_{21},0}) &= \inner{0,0,e_{21},0}\otimes x_0^* \;, \\
\alpha(\inner{0,0,0,e_{ij}}) &= \sum\nolimits_{k,l=1,2,3}\inner{0,0,0,e_{kl}}\otimes
  ( T_1^*)_{ i,k} (T_1)_{ l,j} \;.
\end{align}
\end{subequations}
\end{prop}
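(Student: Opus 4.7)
The plan is to produce, from any object $(Q',U')\in\catA$, a morphism $\Phi\colon Q\to Q'$ witnessing universality of the CQG $Q$ of Lemma~\ref{prop:mainA}. The morphism will be constructed by extracting from $U'$ candidates for the generators $x_k,T_m,V$ and verifying that they satisfy the defining relations \eqref{eq:qisot}. In parallel I would check that the explicit corepresentation $U$ in \eqref{eq:Ufourth} meets Definition~\ref{def:qisot} and yields the coaction \eqref{eq:Ucoact}, so that $(Q,U)\in\catA$ and $\Phi$ is compatible with the corepresentations.

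First I would use the $\Z_2$-grading: since $\gamma_F=1\otimes 1\otimes\mathrm{diag}(1,1,-1,-1)\otimes 1$, condition \eqref{eq:cond1} forces $U'$ to split as a direct sum of two corepresentations on the particle and antiparticle subspaces; the reality condition \eqref{eq:cond5} then identifies the antiparticle block with the componentwise conjugate of the particle one through $J_0$ of \eqref{eq:J0}. To exploit \eqref{eq:cond6}, I would look at the action of $\mathrm{Ad}_{U'}$ on each of the four minimal central projections of $B_F$ read off from \eqref{eq:B}: each must be sent to itself tensored with a central element of $Q'$, which forces the particle block of $U'$ to be block-diagonal along the four subspaces on which the simple summands $\C,\C,M_2(\C),M_3(\C)$ of $B_F$ act non-trivially. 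On the $M_3(\C)$-sector, Lemma~\ref{lemma:trace} applied to the trace on $M_3(\C)$ (preserved, up to a scalar, by the restriction of $\mathrm{Ad}_{U'}$) produces $3\times 3$ biunitaries $T_m$ indexed by the flavor $m$; on the $\C$-sectors I obtain scalar unitaries $x_k$; on the $M_2(\C)$-sector (the complexification of $\Q$) a further unitary $x_0$ emerges; and biunitarity of $V$ on the $\bar p_R$-block comes from the same trace argument.

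Second, I would impose $[U',D_F\otimes 1]=0$. Expanding $D_F$ through \eqref{eq:Dirac}, every elementary term yields an intertwining equation of the form $X\Upsilon=\Upsilon Y$ between two blocks of $U'$ and one of the mass matrices. Under the standing hypotheses: $\Upsilon_u$ and $\Upsilon_e$, being diagonal with simple nonzero spectra, force the corresponding blocks to be diagonal — exactly the $x_k$-pattern visible in \eqref{eq:Ufourth}; $\Upsilon_d=C\delta_{\downarrow}C^*$, after conjugation by $C$ and use of the simplicity of the spectrum of $\delta_{\downarrow}$, directly translates into the mixing relations \eqref{eq:qisotC}; and $\Upsilon_\nu$ (positive, possibly degenerate) together with the symmetric $\Upsilon_R$ produce, invoking Remark~\ref{Upsilon_positive}, the two relations of \eqref{eq:qisotA}. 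By Lemma~\ref{prop:mainA} the assembled relations cut out exactly $Q$ as a quantum subgroup of the free product \eqref{eq:qsubG}, so the assignment of the extracted data to the matching generators extends to a CQG morphism $\Phi\colon Q\to Q'$. A direct computation, using biunitarity of $T_m$ and $V$ and the relations \eqref{eq:qisot}, then shows that the $U$ of \eqref{eq:Ufourth} is itself unitary, commutes with $D_F\otimes 1$ and $\gamma_F\otimes 1$, satisfies \eqref{eq:cond5}, and coacts on $B_F$ according to \eqref{eq:Ucoact}.

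The hard part will be the derivation of \eqref{eq:qisotC}: because $\Upsilon_d$ is diagonal only up to conjugation by the CKM matrix $C$, and because the intertwining equation involves the flavor-indexed family $T_m$ rather than a single matrix, one must separate variables by first conjugating to the eigenbasis of $\delta_{\downarrow}$, use the simplicity of the eigenvalues to isolate each flavor, and then conjugate back — producing a constraint not on the individual $T_m$ but on the $C$-weighted combinations $\sum_m C_{rm}\bar C_{sm}(T_m)_{j,k}$ for $r\neq s$. A secondary source of friction is the four-fold tensor factorisation of $H_F$: I would handle this by fixing once and for all basis labels for weak-isospin, lepton/quark, chirality-and-particle-type, and generation, and then running the whole calculation sector by sector.
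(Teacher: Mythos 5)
Your overall architecture (grading and reality first, then the coaction condition, then the Dirac intertwining relations, then assembly of generators and appeal to Lemma~\ref{prop:mainA}) is the same as the paper's, but there is a concrete gap in the step you yourself flag as the hard one: the mixing relation \eqref{eq:qisotC} cannot be extracted from $[U',D_F\otimes 1]=0$ alone. In $D_F$ (see \eqref{eq:Dirac}) the up-quark term carries $e_{11}\otimes(1-e_{11})$ and the down-quark term carries $e_{22}\otimes(1-e_{11})$, so the Dirac commutation constrains the two weak-isospin quark blocks \emph{separately}: the $\Upsilon_u$-equations make the up blocks (which define the $T_m$) diagonal in generation, while the $\Upsilon_d$-equations only yield that the down blocks $\beta^{2,j,k}_{11}$ commute with $\Upsilon_d\Upsilon_d^*$, i.e.\ that $C^*\beta^{2,j,k}_{11}C$ is diagonal (point 4 of Lemma~\ref{lemma:5}). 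At that stage the down blocks have no relation whatsoever to the $T_m$, so the ``intertwining equation involving the flavor-indexed family $T_m$'' that you propose to separate in the eigenbasis of $\delta_\downarrow$ does not exist yet. The missing input is the coaction condition \eqref{eq:cond6} applied to the \emph{off-diagonal} element $e_{12}$ of the summand $M_2(\C)\subset B_F$: since $e_{12}$ intertwines the two weak-isospin sectors, $\mathrm{Ad}_{U'}(\inner{0,0,e_{12},0})\in B_F\otimes Q'$ forces $\sum_j\beta^{1,i,j}_{11}(\beta^{2,k,j}_{11})^*=\delta_{i,k}\,1\otimes x_0$ (this is also where the unitary $x_0$ is born), whence $\beta^{2,i,j}_{11}=\mathrm{diag}(x_0^*(T_1)_{i,j},\ldots,x_0^*(T_n)_{i,j})$ as in Lemma~\ref{lemma:Utenth}. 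Only by combining this generation-diagonality with the $C$-conjugated diagonality from the Dirac constraint does one obtain $\sum_m C_{rm}\bar C_{sm}(T_m)_{j,k}=0$ for $r\neq s$. Your outline omits this linkage entirely, and without it the relation \eqref{eq:qisotC} — and hence the identification of the universal object with the CQG of Lemma~\ref{prop:mainA} — does not follow.

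A secondary inaccuracy: you assert that \eqref{eq:cond6} by itself forces $U'$ to be block-diagonal along the four subspaces attached to the simple summands of $B_F$. In the paper the relevant four invariant subspaces are the $\nu,e,u,d$ sectors, singled out because they carry distinct spectra of $D_F$ (Lemma~\ref{lemma:appendix2}); the coinvariance of the four summands of $B_F$ (Lemma~\ref{lemma:split}) is then proved using this Dirac-induced block structure, not prior to it. A priori a coaction could, for instance, mix the two one-dimensional summands $\C$ of $B_F$, so this ordering is not cosmetic. Both points are repairable, but they require importing the algebra-preservation condition into the middle of your ``Dirac'' step rather than keeping the two conditions in separate passes.
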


\begin{proof}
The proof is in Sec.~\ref{sec:7}.
\end{proof}

\begin{df}
Let $Q_{n,C}(3)$ be the quantum subgroup of $Q_n(3)$, cf.~Def.~\ref{def:amalg},
defined by the relation $\sum\nolimits_{m=1}^nC_{rm}\bar C_{sm}(u_m)_{j,k}=0$.
\end{df}

\begin{rem}\label{rem:QnC}
It is easy to see that $Q_{n,C}(3)$ is noncommutative as a $C^*$-algebra.
Indeed, if $u$ is a $3\times 3$ biunitary generating $A_u(3)$,
the map
$$
(u_m)_{jk}\mapsto u_{jk}\,,\;\forall\;m=1,\ldots,n\,,\;j,k=1,2,3\,,
$$
is a $C^*$-algebra morphism
($C$ is a unitary matrix, hence \eqref{eq:qisotC}
and \eqref{eq:qisotB} are automatically satisfied). 
Thus $ A_u ( 3 ) $ is a quantum subgroup of $ Q_{n,C}(3)$.
\end{rem}

\begin{prop}
The quantum isometry group of the internal space of the Standard
Model is
$$
\qiso ( D_F ) =C(U(1))*A_{\mathrm{aut}}(M_3(\C)) \;.
$$
Its abelianization is given by (complex functions on) the classical
group $U(1)\times PU(3)$.
\end{prop}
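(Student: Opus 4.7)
The plan is to apply the characterisation of $\qiso(D_F)$ as the Woronowicz $C^*$-subalgebra of $\qisot(D_F)$ generated by the matrix coefficients of $\mathrm{Ad}_{U_0}$ restricted to $B_F$, using the explicit formula for this coaction already given in \eqref{eq:Ucoact}. Since $\alpha$ is trivial on the two $\C$-summands and on $e_{11},e_{22}$ of the $M_2(\C)$-summand, the only nontrivial coefficients are $x_0$, $x_0^*$ and the bilinear products $(T_1^*)_{i,k}(T_1)_{l,j}$ for $i,j,k,l\in\{1,2,3\}$. Thus $\qiso(D_F)$ is the Woronowicz $C^*$-subalgebra of $\qisot(D_F)$ generated by these elements.

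First I would identify the two sub-$C^*$-algebras generated individually by $\{x_0\}$ and by $\{(T_1^*)_{i,k}(T_1)_{l,j}\}$. By relation \eqref{eq:qisotB} the bilinears are independent of $m$, and by Proposition \ref{thm:Ban} the abstract $C^*$-algebra generated by such bilinears inside $A_u(3)$ is $A_{\mathrm{aut}}(M_3(\C))$. To show that the natural surjection of this CQG onto the subalgebra of $\qisot(D_F)$ is an isomorphism, I would construct a CQG morphism $\rho: \qisot(D_F) \to A_u(3)$ via the substitution $x_k\mapsto 1$, $V\mapsto \mathbb{I}_n$, $T_m\mapsto u$ (the canonical biunitary of $A_u(3)$): \eqref{eq:qisotA} is then trivially true, and \eqref{eq:qisotC} reduces to $(CC^*)_{rs}u_{jk}=\delta_{rs}u_{jk}$, which vanishes for $r\neq s$ by unitarity of the CKM matrix $C$. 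Analogously, a CQG morphism $\rho_1: \qisot(D_F)\to C(U(1))$ given by $x_0\mapsto y$, $x_k\mapsto y^*$ for $k\geq 1$, $V\mapsto \mathbb{I}_n$, $T_m\mapsto \mathbb{I}_3$ shows that $C^*(x_0)\cong C(U(1))$.

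To establish the full free product structure I would combine these into a CQG morphism $\sigma: \qisot(D_F)\to C(U(1)) * A_u(3)$ defined by $x_0\mapsto y$, $x_k\mapsto y^*$ ($k\geq 1$), $V\mapsto \mathbb{I}_n$, $T_m\mapsto u$; all relations of \eqref{eq:qisot} are satisfied by the same calculations. The inclusions $C(U(1))\cong C^*(x_0)\hookrightarrow \qiso(D_F)$ and $A_{\mathrm{aut}}(M_3(\C))\hookrightarrow \qiso(D_F)$ produce, via the universal property of the free product, a canonical surjection $\Psi: C(U(1)) * A_{\mathrm{aut}}(M_3(\C)) \twoheadrightarrow \qiso(D_F)$. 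Composing $\Psi$ with the inclusion $\qiso(D_F)\hookrightarrow \qisot(D_F)$ and then with $\sigma$ yields on generators exactly the canonical inclusion $C(U(1)) * A_{\mathrm{aut}}(M_3(\C)) \hookrightarrow C(U(1)) * A_u(3)$, which is injective because $A_{\mathrm{aut}}(M_3(\C))\simeq PA_u(3)$ is a Woronowicz $C^*$-subalgebra of $A_u(3)$ (and the Haar state of the larger free product restricts to that of the smaller one, faithful on the dense Hopf $*$-algebra). Hence $\Psi$ is injective and thus an isomorphism, proving the first equality.

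Finally, the abelianisation of a unital free product of $C^*$-algebras is the tensor product of the abelianisations. Since $C(U(1))$ is already abelian and $A_{\mathrm{aut}}(M_3(\C))^{\mathrm{ab}}\cong C(PU(3))$ (classical automorphisms of $M_3(\C)$ are inner by Skolem--Noether), we obtain $\qiso(D_F)^{\mathrm{ab}}\cong C(U(1))\otimes C(PU(3))\cong C(U(1)\times PU(3))$, as stated. The main technical obstacle is verifying that the substitutions defining $\rho$, $\rho_1$ and $\sigma$ satisfy \emph{all} relations of $\qisot(D_F)$ simultaneously: the crucial observations are that the choice $V\mapsto \mathbb{I}_n$ forces $\bar V=\mathbb{I}_n$ and collapses \eqref{eq:qisotA}, while unitarity of the CKM matrix $C$ collapses \eqref{eq:qisotC}.
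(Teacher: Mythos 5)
Your argument is correct and follows the same two--sided strategy as the paper: read off from \eqref{eq:Ucoact} that $\qiso(D_F)$ is generated by $x_0$ and the bilinears $(T_1^*)_{i,k}(T_1)_{l,j}$, so that it is a quotient of $C(U(1))*PA_u(3)$, and then exhibit $C(U(1))*A_u(3)$ as a sub-object of $\qisot(D_F)$ in $\catA$ via an explicit substitution to obtain the reverse surjection. Three points of comparison are worth recording. First, your substitution $x_0\mapsto y$, $x_k\mapsto y^*$, $V\mapsto\mathbb{I}_n$, $T_m\mapsto u$ is actually the right one: the paper's map ($x_i\mapsto 1$, $V\mapsto x_0 1_n$) sends $\mathrm{diag}(x_0x_1,\ldots,x_0x_n)$ to $x_0\mathbb{I}_n$ but $\bar V$ to $x_0^*\mathbb{I}_n$, so the first chain of relations in \eqref{eq:qisotA} fails whenever $\Upsilon_\nu\neq 0$ (and $V\Upsilon_R=\Upsilon_R\bar V$ fails whenever $\Upsilon_R\neq 0$); your choice, which forces $x_0x_k\mapsto 1$, matches the pattern of Remark \ref{rem:gauge} and repairs this. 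Second, you make explicit a step the paper's proof silently uses as well, namely that the canonical map $C(U(1))*PA_u(3)\to C(U(1))*A_u(3)$ is injective; your Haar-state argument only gives injectivity on the dense Hopf $*$-algebra, and to promote this to the $C^*$-level one should invoke the embedding theorem for full unital free products amalgamated over $\C$ (or, equivalently, observe that both $C(U(1))$ and $PA_u(3)\simeq A_{\mathrm{aut}}(M_3(\C))$ are universal $C^*$-completions of their Hopf $*$-algebras, so the free product carries the maximal norm). Third, you actually prove the abelianization statement --- free products abelianize to tensor products, and $A_{\mathrm{aut}}(M_3(\C))^{\mathrm{ab}}\simeq C(PU(3))$ by Skolem--Noether --- whereas the paper's proof does not address it. None of these differences changes the architecture of the proof, but the first one is a genuine correction to the published argument.
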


\begin{proof}
From \eqref{eq:Ucoact} it follows that $\qiso ( D_F ) $ is generated by
$x_0$ and $(T_1^*)_{ i,k}(T_1)_{ l,j}$: then $\qiso ( D_F )$ is a quantum
subgroup of $C(U(1))*PA_u(3)$.
On the other hand $A_u(3)$ is a quantum subgroup of $Q_{n,C}(3)$
(Rem.~\ref{rem:QnC}), and with the map $x_0\mapsto x_0$,
$x_i\mapsto 1$ ($i=1,\ldots,n$), $V\mapsto x_0 1_n$ and
$(T_m)_{jk}\mapsto u_{jk}\;\forall\;m=1,\ldots,n$ (with $u_{jk}$
the usual generators of $A_u(3)$), one proves that 
$C(U(1))*A_u(3)$ is a sub-object of $\qisot(D_F)$
in the category $\catA$ and
$C(U(1))*PA_u(3)$ is a quantum subgroup of $\qiso (D_F)$;
hence $\qiso ( D_F )$ and $C(U(1))*PA_u(3)$ coincide.
Recalling that $PA_u(3)\simeq A_{\mathrm{aut}}(M_3(\C)) $
(cf.~Def.~\ref{def:Ban} and Prop.~\ref{thm:Ban})
the proof is concluded.
\end{proof}

Although $\qisot ( D_F ) $ depends on $\Upsilon_\nu$ , $\Upsilon_R$ and the CKM matrix $C$
(cf.~\eqref{eq:qisot}), the quantum group $\qiso ( D_F )$ does not depend on the explicit form
of these two matrices. We stress the importance of this results,
since neutrino masses are not known (at the moment, we only
know that they are all distinct \cite{Fuk98,Ahm02}).
Also, $\qiso ( D_F )$ is independent on the number of generations.

Let us conclude this section by explaining how elementary particles transform
under the corepresentation $U$ in physics notation.
As explained in Sec.~\ref{sec:SM}, we have
\begin{align*}
\nu_{L,k} &:=e_1\otimes e_1\otimes e_1\otimes e_k \;,
&& \text{(left-handed neutrino, generation $k$)}
\\
\nu_{R,k} &:=e_1\otimes e_1\otimes e_4\otimes e_k \;,
&& \text{(right-handed neutrino, generation $k$)}
\\
e_{L,k} &:=e_2\otimes e_1\otimes e_1\otimes e_k \;,
&& \text{(left-handed electron, generation $k$)}
\\
e_{R,k} &:=e_2\otimes e_1\otimes e_4\otimes e_k \;,
&& \text{(right-handed electron, generation $k$)}
\\
u_{L,c,k} &:=e_1\otimes e_{c+1}\otimes e_1\otimes e_k \;,
&& \text{(left-handed up-quark, color $c$, generation $k$)}
\\
u_{R,c,k} &:=e_1\otimes e_{c+1}\otimes e_4\otimes e_k \;,
&& \text{(right-handed up-quark, color $c$, generation $k$)}
\\
d_{L,c,k} &:=e_2\otimes e_{c+1}\otimes e_1\otimes e_k \;,
&& \text{(left-handed down-quark, color $c$, generation $k$)}
\\
d_{R,c,k} &:=e_2\otimes e_{c+1}\otimes e_4\otimes e_k \;,
&& \text{(righ-handed down-quark, color $c$, generation $k$)}
\end{align*}
where $\{e_i\,,\,i=1,\ldots,r\}$ is the canonical orthonormal basis of $\C^r$,
$c=1,2,3$ and $k=1,\ldots,n$.
These together with the corresponding antiparticles form a linear basis
of $H_F$. A straightforward computation using \eqref{eq:Ufourth} proves
that we have the following transformation laws
\begin{align*}
U(\nu_{L,k}) &:=\nu_{L,k}\otimes x_0x_k \;,
&
U(\nu_{R,k}) &:=\sum\nolimits_{j=1}^n\nu_{R,j}\otimes\bar V_{jk} \;,
\\
U(e_{L,k}) &:=e_{L,k}\otimes x_k \;,
&
U(e_{R,k}) &:=e_{R,k}\otimes x_k \;,
\\[3pt]
U(u_{L,c,k}) &:=\sum\nolimits_{c'=1}^3u_{L,c',k}\otimes (T_k)_{c'c} \;,
&
U(u_{R,c,k}) &:=\sum\nolimits_{c'=1}^3u_{R,c',k}\otimes (T_k)_{c'c} \;,
\\
U(d_{L,c,k}) &:=\sum\nolimits_{c'=1}^3d_{L,c',k}\otimes x_0^*(T_k)_{c'c} \;,
&
U(d_{R,c,k}) &:=\sum\nolimits_{c'=1}^3d_{R,c',k}\otimes x_0^*(T_k)_{c'c} \;,
\end{align*}
where $U(v)$, $v\in H_F$, is a shorthand notation for $U(v\otimes 1_Q)$.
Antiparticles transform according to the conjugate corepresentations.

We comment now on the meaning of $\qisot(D_F)$.
\begin{rem}\label{rem:gauge}
Let $z$ be the generator of $C(U(1))$, $T=((T_{jk}))$ be the generators of $A_u(3)$
and consider the corepresentation $H_F\to H_F\otimes (C(U(1))*A_u(3))$
determined by 
\begin{subequations}\label{eq:symbreak}
\begin{align}
\nu_{\bullet,k} &\mapsto \nu_{\bullet,k}\otimes 1 \;,
&
e_{\bullet,k} &\mapsto e_{\bullet,k}\otimes (z^*)^3 \;,\\
u_{\bullet,c,k} &\mapsto 
\sum\nolimits_{c'=1}^3u_{\bullet,c',k}\otimes z^2T_{c'c} \;,
&
d_{\bullet,c,k} &\mapsto
\sum\nolimits_{c'=1}^3d_{\bullet,c',k}\otimes z^*T_{c'c} \;,
\end{align}
\end{subequations}
where $\bullet$ is $L$ or $R$. Let $q$ be a third root of unity,
and consider the $\Z_3$ action on $C(U(1))*A_u(3)$ given by $z\mapsto qz$,
$T_{jk}\mapsto qT_{jk}$. 
The elements appering in the image of the above corepresentation
generate the fixed point subalgebra for this action, that is 
$\{C(U(1))*A_u(3)\}^{\Z_3}$.

The quantum group $\{C(U(1))*A_u(3)\}^{\Z_3}$ with the corepresentation
above is a sub-object of $\qisot(D_F)$ in the category $\catA$.
The surjective CQG homorphism $\qisot( D_F )\to\{C(U(1))*A_u(3)\}^{\Z_3}$
is given by
$$
x_0\mapsto z^3 ,\quad
x_m\mapsto (z^*)^3 ,\;\;\forall m=1,\ldots,n\,,\quad
(T_m)_{jk}\mapsto z^2T_{jk},\;\;\forall m=1,\ldots,n\,,\quad
V\mapsto 1_n \,.
$$
The kernel of this map ---
the ideal generated by $V_{jk}$ and by products $x_0x_k$
and $(T_m^*T_{m'})_{kl}$ for all $m\neq m'$
--- is given by elements that do not appear in the adjoint
coaction on $B_F$. Roughly speaking, modulo terms ``commuting''
with the algebra $B_F$, we have that $\qisot( D_F )\sim\{C(U(1))*A_u(3)\}^{\Z_3}$
is the ``free version'' of the ordinary gauge group
after symmetry breaking.

If we pass to the abelianization $C(U(1)\times U(3))^{\Z_3}\simeq
C\big(\bigl(U(1)\times U(3)\bigr)/\Z_3\big)$ of $\{C(U(1))*A_u(3)\}^{\Z_3}$
and from the corresponding corepresentation to the dual representation
of $(\tau,g)\in U(1)\times U(3)$,
from \eqref{eq:symbreak} we find the usual global gauge transformations
after symmetry breaking:
%generated (as a Lie group) by the electric charge and the color:
$$
\nu_{\bullet,k}\mapsto \nu_{\bullet,k}, \,\quad
e_{\bullet,k} \mapsto (\tau^*)^3e_{\bullet,k}, \,\quad
u_{\bullet,c,k}\mapsto \sum\nolimits_{c'=1}^3\tau^2 g_{c'c}u_{\bullet,c',k}, \,\quad
d_{\bullet,c,k}\mapsto\sum\nolimits_{c'=1}^3 \tau^* g_{c'c}d_{\bullet,c',k}.
$$
\end{rem}

\subsection[Special cases]{$\widetilde{\text{QISO}}^+_J$ in two special cases}\label{minimalmodel}

As we already noticed, $\qisot ( D_F ) $ depends upon the explicit form of
$\Upsilon_\nu$, $\Upsilon_R$ and $C$. In particular, on one extreme we
have the case when $\Upsilon_\nu$ is invertible
(this is the case of the Dirac operator in the moduli space as in Prop.~1.192 of
\cite{CM08}) and on the other extreme we have the case $\Upsilon_\nu=0$.

\begin{prop}
If $ \Upsilon_\nu $ is invertible, $\qisot ( D_F ) $ is the free product
of $Q_{n,C}(3)$ with the quotient of
$$
\underbrace{C(U(1))*C(U(1))*\ldots*C(U(1))}_{n+1}
$$
by the relations
\begin{align*}
x^*_i x^*_0 &= x_0 x_j \qquad\forall\;i,j\;\text{such that}\;(\Upsilon_R)_{ij}\neq 0\;,\\
x_i & = x_j \qquad\forall\;i,j\;\text{such that}\;(\Upsilon_\nu)_{ij}\neq 0\;.
\end{align*}
\end{prop}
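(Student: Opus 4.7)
The plan is to specialize the description of $\qisot(D_F)$ obtained in Proposition~\ref{prop:main} and Lemma~\ref{prop:mainA}: namely, $\qisot(D_F)$ is the quantum subgroup of
\[
\underbrace{C(U(1))*\cdots*C(U(1))}_{n+1}*Q_n(3)*A_u(n)
\]
cut out by relations \eqref{eq:qisotA}--\eqref{eq:qisotB}. The first observation is that these relations split cleanly into two disjoint blocks of generators: the biunitarity together with \eqref{eq:qisotB} and \eqref{eq:qisotC} only involves the $T_m$, and by definition the corresponding quotient of $Q_n(3)$ is $Q_{n,C}(3)$; while \eqref{eq:qisotA} only involves the $x_k$'s and the biunitary $V$. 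Therefore $\qisot(D_F)$ is the free product of $Q_{n,C}(3)$ with the CQG $Q'$ obtained as the quotient of $C(U(1))^{*(n+1)}*A_u(n)$ by \eqref{eq:qisotA}, and all that remains is to identify $Q'$.

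To identify $Q'$, I use the invertibility of $\Upsilon_\nu$. Cancelling $\Upsilon_\nu$ in the equalities $\mathrm{diag}(x_0x_1,\ldots,x_0x_n)\Upsilon_\nu=\bar V\Upsilon_\nu$ and $\Upsilon_\nu\bar V=\Upsilon_\nu\mathrm{diag}(x_0x_1,\ldots,x_0x_n)$ of \eqref{eq:qisotA} forces
\[
\bar V=\mathrm{diag}(x_0x_1,\ldots,x_0x_n),
\]
i.e.\ $V_{jj}=x_j^*x_0^*$ and $V_{jk}=0$ for $j\neq k$. Since each $x_0x_k$ is unitary, this diagonal matrix is automatically biunitary, so the biunitarity constraint on $V$ imposes no extra relation: $V$ is simply eliminated as an independent generator, and the $A_u(n)$-factor collapses into a subalgebra of $C(U(1))^{*(n+1)}$.

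With $V$ eliminated, the remaining content of \eqref{eq:qisotA} becomes relations purely among the $x_k$'s. Reading off entry $(i,j)$ of the commutator equality $\mathrm{diag}(x_0x_1,\ldots)\Upsilon_\nu=\Upsilon_\nu\mathrm{diag}(x_0x_1,\ldots)$ gives $x_0x_i(\Upsilon_\nu)_{ij}=(\Upsilon_\nu)_{ij}x_0x_j$, and since $(\Upsilon_\nu)_{ij}\in\C$ is central we may cancel it whenever it is nonzero and multiply by $x_0^*$ to obtain $x_i=x_j$. Similarly, substituting $V_{ii}=x_i^*x_0^*$ and $\bar V_{jj}=x_0x_j$ into entry $(i,j)$ of $V\Upsilon_R=\Upsilon_R\bar V$ yields $x_i^*x_0^*=x_0x_j$ whenever $(\Upsilon_R)_{ij}\neq 0$. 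These are precisely the two families of relations in the statement, so $Q'$ is the asserted quotient of $(n+1)$ copies of $C(U(1))$. I do not expect any serious obstacle: the only point to verify is that these relations define a Woronowicz $C^*$-ideal, which is immediate from the group-like coproduct $\Delta(x_k)=x_k\otimes x_k$ (or alternatively from Lemma~\ref{lemma:6.10}).
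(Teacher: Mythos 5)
Your proposal is correct and follows essentially the same route as the paper: invertibility of $\Upsilon_\nu$ forces $\bar V=\mathrm{diag}(x_0x_1,\ldots,x_0x_n)$ so the $A_u(n)$ factor disappears, and the remaining parts of \eqref{eq:qisotA} read off entrywise give exactly the two families of relations $x_i=x_j$ for $(\Upsilon_\nu)_{ij}\neq 0$ and $x_i^*x_0^*=x_0x_j$ for $(\Upsilon_R)_{ij}\neq 0$. The extra remarks you include (the split of the relations into disjoint blocks of generators, the automatic biunitarity of the resulting diagonal $V$, and the Woronowicz-ideal check via the group-like coproduct) are correct and only make explicit what the paper leaves implicit.
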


\begin{proof}
If $\Upsilon_\nu$ is invertible, the first equation in
\eqref{eq:qisotA} gives $V={\rm diag} ( x^*_1 x^*_0,\ldots, x^*_n x^*_0 )$
(so that the factor $A_u(n)$ in \eqref{eq:qsubG} disappears) and
also $(\Upsilon_\nu)_{ij}x_0(x_i - x_j)=0$.
The latter implies $x_i=x_j$ whenever $(\Upsilon_\nu)_{ij}\neq 0$.

The second equation in \eqref{eq:qisotA} becomes $(\Upsilon_R)_{ij}(x^*_i x^*_0 - x_0 x_j)=0$,
which implies $x^*_i x^*_0 = x_0 x_j$ whenever $(\Upsilon_R)_{ij}\neq 0$.
\end{proof}

Although disproved by experiment, it is an interesting exercise
to study the case of massless ($\Upsilon_\nu=0$) left-handed
neutrinos, that is the so-called \emph{minimal} Standard Model.

\begin{prop}\label{prop:min}
If $\Upsilon_\nu=0$, $\qisot ( D_F )$ is isomorphic to
$$
\underbrace{C(U(1))*C(U(1))*\ldots*C(U(1))}_{n + 1 }\,\,*\,\,Q_{n,C}(3) * A',
$$
where $A':= \,A_u(n)/\!\sim\;$, $A_u(n)$ is generated by the $n\times n$
biunitary $V$ and ``$\sim$'' is the relation $V\Upsilon_R =\Upsilon_R\bar{ V }$.
\end{prop}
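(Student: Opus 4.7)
The strategy is to specialize Proposition~\ref{prop:main}---via the explicit presentation of Lemma~\ref{prop:mainA}---to the case $\Upsilon_\nu = 0$, and to observe that the surviving defining relations live entirely inside individual factors of the ambient free product, so that $\qisot(D_F)$ splits accordingly.

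First I would substitute $\Upsilon_\nu = 0$ into the system \eqref{eq:qisot}. The four equalities in \eqref{eq:qisotA} involving $\Upsilon_\nu$, namely
\[
\mathrm{diag}(x_0x_1,\ldots,x_0x_n)\Upsilon_\nu = \Upsilon_\nu\mathrm{diag}(x_0x_1,\ldots,x_0x_n) = \bar V\Upsilon_\nu = \Upsilon_\nu\bar V\,,
\]
collapse to the tautology $0=0$ and therefore disappear from the presentation. Consequently the unitaries $x_0,\ldots,x_n$ become completely unconstrained and the biunitary $V$ is no longer coupled to any of them. The only surviving relation from \eqref{eq:qisotA} is the Majorana condition $V\Upsilon_R = \Upsilon_R\bar V$, which involves only the entries of $V$. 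The CKM relations \eqref{eq:qisotC} and the projective identifications \eqref{eq:qisotB} (already built into $Q_n(3)$) involve only the $T_m$'s.

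Next I would invoke the universal property of the free product of CQGs: when all relations imposed on a free product lie inside individual factors, the quotient is the free product of the corresponding factor quotients. Applied to the ambient CQG
\[
\underbrace{C(U(1))*\cdots*C(U(1))}_{n+1}\,*\,Q_n(3)\,*\,A_u(n)
\]
of Lemma~\ref{prop:mainA}, the $x_k$-block is left untouched, the $T_m$-block is quotiented by \eqref{eq:qisotC} yielding $Q_{n,C}(3)$ by the very definition preceding Remark~\ref{rem:QnC}, and the $V$-block is quotiented by the Majorana relation yielding $A'$. The matrix coproducts in \eqref{eq:zzz} are intrinsic to each factor and agree with those of the free product, so one concludes
\[
\qisot(D_F) \simeq \underbrace{C(U(1))*\cdots*C(U(1))}_{n+1}\,*\,Q_{n,C}(3)\,*\,A'\,.
\]

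The only substantive technical point is verifying that $V\Upsilon_R = \Upsilon_R\bar V$ does define a Woronowicz $C^*$-ideal of $A_u(n)$, so that $A'$ is genuinely a CQG. This is a direct application of Lemma~\ref{lemma:6.10} with $X = V$, $Y = \bar V$ and $A = \Upsilon_R$: the identity $\Delta(V_{ik}) = \sum_j V_{ij}\otimes V_{jk}$ is built into $A_u(n)$, while the analogous identity $\Delta(\bar V_{ik}) = \sum_j \bar V_{ij}\otimes \bar V_{jk}$ follows by applying the $*$-homomorphism $\Delta$ to $V_{ik}^*$. This is in fact the same argument already used inside the proof of Lemma~\ref{prop:mainA} to handle the Majorana relation, and it is the only step in the present proof that is not mere bookkeeping of the presentation \eqref{eq:qisot} after setting $\Upsilon_\nu=0$.
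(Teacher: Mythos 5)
Your proposal is correct and follows exactly the route the paper intends (the paper states this proposition without writing out the proof, treating it as the direct specialization of Lemma~\ref{prop:mainA} and Proposition~\ref{prop:main} to $\Upsilon_\nu=0$, just as Proposition~3.9 handles the invertible case): setting $\Upsilon_\nu=0$ kills all of \eqref{eq:qisotA} except the Majorana relation, the surviving relations each live in a single free factor, and the universal property of the free product gives the splitting. Your observation that Lemma~\ref{lemma:6.10} should be applied with $X=V$, $Y=\bar V$, $A=\Upsilon_R$ to see that $A'$ is a genuine CQG is the right (and only) nontrivial verification, and it is indeed the same argument already used inside the proof of Lemma~\ref{prop:mainA}.
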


\smallskip

As a consequence of Noether's theorem, any Lie group symmetry is associated
to a corresponding conservation law. 
We shall see in Sec.~\ref{sec:4.2} that $\qisot(D_F)$ is indeed a symmetry of the dynamics.
In Rem.~\ref{rem:gauge}, roughtly speaking,
we discussed the part of $\qisot$ that is relevant in the coaction on the algebra
$B_F$: it is the free version of the gauge group which corresponds to the conservation
of color and electric charge.
We complete here the analysis by discussing the additional symmetries that are present
in the case of the minimal Standard Model in Prop.~\ref{prop:min}.

The factor $A'$ coacts only on the subspace
$(e_{11}\otimes e_{11}\otimes (e_{22}+e_{44})\otimes 1)H_F$
of right-handed neutrinos, and can be neglected in the
minimal Standard Model (where we consider only left-handed
neutrinos).
As a consequence of Noether's theorem, there exists a conservation law corresponding to each classical group
of symmetries.

It is easy to give an interpretation to the $C(U(1))$
factors generated by $x_i$, $i=1,\ldots,n$. Passing from the $C(U(1))$
coaction to the dual $U(1)$ action, one easily sees that for $ i > 0, $
$x_i$ gives a phase transformation of the $i$-th
generation of $\nu_L,e_L,e_R$ (plus the opposite transformation
for the antiparticles).
In the minimal Standard Model, which has only
left-handed (massless) neutrinos, these symmetries give
the conservation laws of the total number of leptons in
each generation (electron number, muon number, tau number,
plus other $n-3$ for the other families of leptons).

To conclude the list of conservation laws, there is still
one classical $U(1)$ subgroup of the factor $Q_{n,C}(3)$ that
should be mentioned.
If we denote by $y$ the unitary generator of
$C(U(1))$, a surjective CQG homomorphism $\varphi:\qisot ( D_F )\to C(U(1))$
is given by
$$
x_0 \mapsto 1\;, \qquad
x_i \mapsto 1\;, \qquad
V_{j,k} \mapsto \delta_{j,k} \;, \qquad
(T_i)_{j,k} \mapsto\delta_{j,k} y\;,
$$
for all $i=1,\ldots,n$ and $j,k=1,2,3$.
From $U$ we get the following corepresentation of this $U(1)$ subgroup
on $H_F$:
\begin{align*}
(id\otimes\varphi)(U) &=
1\otimes e_{11}\otimes 1\otimes 1\otimes 1_{C(U(1))}
\\[2pt] & \quad +
1\otimes (1-e_{11})\otimes (e_{11}+e_{44})\otimes 1\otimes y
\\[2pt] & \quad +
1\otimes (1-e_{11})\otimes (e_{22}+e_{33})\otimes 1\otimes y^*
\;.
\end{align*}
The representation of $U(1)$ dual to this corepresentation of $C(U(1))$
is given by a phase transformation on the subspace
$\C^2\otimes(1-e_{11})\C^4\otimes (e_{11}+e_{44})\C^4\otimes\C^n$
of quarks and the inverse transformation on the subspace
$\C^2\otimes(1-e_{11})\C^4\otimes (e_{22}+e_{33})\C^4\otimes\C^n$
of anti-quarks and is called in physics the ``baryon phase symmetry''.
It corresponds to the conservation of the baryon number (total number
of quarks minus the number of anti-quarks).

\medskip

In this section we discussed conservation laws associated to
classical subgroups of $\qisot ( D_F )$ in the massless
neutrino case.
It would be interesting to extend this study to the full quantum
group $\qisot ( D_F )$ in the sense of a suitable Noether analysis extended to the
quantum group framework.
If we consider massive neutrinos, we lose a lot of
classical symmetries, but we still have many quantum symmetries.
A natural question is whether quantum symmetries are suitable
for deriving conservation laws (i.e.~physical predictions).
A first step in this direction is to investigate whether the spectral action
is invariant under quantum isometries. We discuss this point in the next
section.

%%% ======================================================================

\section{Quantum isometries of $M\times F$}\label{sec:5}
\subsection{Quantum isometries of a product of spectral triples} \label{sec:5a}
Before discussing the spectral action, we want to understand whether the quantum isometry group of the finite geometry $F$ is also a quantum group
of orientation preserving isometries of the full spectral triple of the Standard Model, that is the product of $F$ with the canonical spectral
triple of a compact Riemannian spin manifold $M$ with no boundary. The answer is affirmative and we can prove it in a more
general situation:
\begin{itemize}
\punto
Let $(\A_1,\HH_1,D_1,\gamma_1,J_1)$ be any unital real spectral triple ($\gamma_1=1$ if the spectral triple is odd).

\punto
Let $(\A_2,\HH_2,D_2,\gamma_2,J_2)$ be a finite-dimensional unital even real spectral triple.

\punto
Let $(\A,\HH,D,\gamma,J)$ be the product triple, i.e.
\begin{gather*}
\A:=\A_1\otimes_{\mathrm{alg}}\A_2 \;, \qquad
\HH:=\HH_1\otimes\HH_2 \;, \qquad
D:=D_1\otimes\gamma_2+1\otimes D_2 \;,\\[2pt]
\gamma:=\gamma_1\otimes \gamma_2 \;, \qquad
J:=J_1\otimes J_2 \;.
\end{gather*}
\end{itemize}
In the case of the Standard Model, $(\A_1,\HH_1,D_1,\gamma_1,J_1)$ and $(\A_2,\HH_2,D_2,\gamma_2,J_2)$ will be the canonical spectral triple of $M$ and the spectral triple $(B_F,H_F,D_F,\gamma_F,J_F)$ respectively.

We claim that:

\begin{lemma}\label{lemma:product}
 $\widetilde{\rule{0pt}{7pt}\smash[t]{\mathrm{QISO}}}\rule{0pt}{8pt}^+(\A_2,\HH_2,D_2,\gamma_2,J_2)$ coacts by ``orientation and
real structure preserving isometries'' on the product triple $(\A,\HH,D,\gamma,J)$.
\end{lemma}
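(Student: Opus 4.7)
The plan is to promote the universal corepresentation of $Q_2 := \qisot(\A_2,\HH_2,D_2,\gamma_2,J_2)$ from $\HH_2$ to the product Hilbert space $\HH = \HH_1 \otimes \HH_2$ by letting it act trivially on the first tensor factor. Concretely, denoting by $U_2 \in \mc{M}(\K(\HH_2) \otimes Q_2)$ the universal corepresentation of $Q_2$ (which we may regard as an element of $\B(\HH_2) \otimes Q_2$, since $\HH_2$ is finite-dimensional), I set $U := 1_{\HH_1} \otimes U_2$. This is a unitary in $\mc{M}(\K(\HH) \otimes Q_2)$, and the corepresentation identity $(\mathrm{id} \otimes \Delta) U = U_{(12)} U_{(13)}$ is inherited directly from the one for $U_2$. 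The rest of the proof then amounts to verifying, one by one, the three conditions of Def.~\ref{def:qisot}, each reducing to a corresponding property already known for $U_2$.

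For \eqref{eq:cond1}, commutativity of $U$ with $\gamma \otimes 1_{Q_2}$ is immediate from $[U_2, \gamma_2 \otimes 1_{Q_2}] = 0$; and since $D = D_1 \otimes \gamma_2 + 1 \otimes D_2$, commutativity of $U$ with $D \otimes 1_{Q_2}$ splits into $[U_2, D_2 \otimes 1_{Q_2}] = 0$ for the second summand and, less obviously, $[U_2, \gamma_2 \otimes 1_{Q_2}] = 0$ for the first summand $D_1 \otimes \gamma_2 \otimes 1_{Q_2}$. Condition \eqref{eq:cond2} is verified on elementary tensors $\xi = \xi_1 \otimes \xi_2$: expanding $U_2(\xi_2 \otimes 1) = \sum_i \eta_i \otimes a_i$ and using $J = J_1 \otimes J_2$, one obtains $(J \otimes *) U(\xi \otimes 1) = J_1 \xi_1 \otimes \sum_i J_2 \eta_i \otimes a_i^* = U(J\xi \otimes 1)$ by the analogous identity for $U_2$.

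For \eqref{eq:cond3}, a direct computation on simple tensors gives $\mathrm{Ad_U}(a_1 \otimes a_2) = a_1 \otimes \mathrm{Ad}_{U_2}(a_2)$, whence $(\mathrm{id} \otimes \varphi) \mathrm{Ad_U}(a_1 \otimes a_2) = a_1 \otimes (\mathrm{id} \otimes \varphi) \mathrm{Ad}_{U_2}(a_2) \in \A_1 \otimes \A_2 \subset \A''$; here we use $\A_2'' = \A_2$, which holds because the image of $\A_2$ in $\B(\HH_2)$ is a finite-dimensional, hence norm-closed, $*$-subalgebra. Linearity extends this to all $a \in \A$. I do not foresee a serious obstacle to this strategy: the first tensor factor is entirely inert, so every constraint pulls back to a property enjoyed by $U_2$ by virtue of $Q_2 = \qisot(\A_2,\HH_2,D_2,\gamma_2,J_2)$; the only mildly delicate moment is the crossed term $D_1 \otimes \gamma_2$ in the product Dirac operator, where one exploits the $\gamma_2$-compatibility of $U_2$ rather than any structure coming from $\HH_1$.
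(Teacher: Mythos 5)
Your proposal is correct and follows essentially the same route as the paper: both take $U=1_{\HH_1}\otimes U_2$ and reduce each of the three conditions of Def.~\ref{def:qisot} to the corresponding property of $U_2$, with the crossed term $D_1\otimes\gamma_2$ handled via the $\gamma_2$-commutativity of $U_2$ and condition \eqref{eq:cond3} handled via finite-dimensionality of $\A_2$ (so that $\mathrm{Ad}_{U_2}(\A_2)\subset\A_2\otimes_{\mathrm{alg}}Q_2$ and $\A_2''=\A_2$). No gaps.
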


\begin{proof}
Let $Q_0$ be the quantum group $\widetilde{\rule{0pt}{7pt}\smash[t]{\mathrm{QISO}}}\rule{0pt}{8pt}^+_{J_2}( D_2 )$ and $U$ its corepresentation on $\HH_2$.
Then $\hat{U}:=1\otimes U$ is a unitary corepresentation on $\HH_1\otimes\HH_2$, and we need to prove that
it satisfies \eqref{eq:cond1}, \eqref{eq:cond2}, and \eqref{eq:cond3}.
The first two conditions are easy to check. Indeed, if $U$ commutes with $D_2$ and $\gamma_2$, clearly $1\otimes U$
commutes with $D=D_1\otimes\gamma_2+1\otimes D_2$ and $\gamma=\gamma_1\otimes \gamma_2$.
Moreover, for any vector $\xi=\xi_1\otimes\xi_2\in\HH_1\otimes\HH_2$,
\begin{align*}
(J\otimes *)\hat{U}(\xi\otimes 1) &=
(J_1\otimes J_2\otimes *)(1\otimes U)(\xi_1\otimes\xi_2\otimes 1) \\
&=J_1\xi_1\otimes (J_2\otimes *)U(\xi_2\otimes 1) \\
&=J_1\xi_1\otimes U(J_2\xi_2\otimes 1) \\
&=(1\otimes U)(J_1\xi_1\otimes J_2\xi_2\otimes 1) \\
&=\hat{U}(J\xi\otimes 1) \;,
\end{align*}
and thus \eqref{eq:cond2} is proved.

Any element of $\A$ is a finite sum of tensors $a_1\otimes a_2$, with $a_1\in\A_1$ and
$a_2\in\A_2$, and since $\A_2$ is finite dimensional implies  $U(a_2\otimes 1_{Q_0})U^*\in\A_2\otimes_{\mathrm{alg}} Q_0$, we have
$$
\mathrm{Ad_{\hat{U}}}  (a_1\otimes a_2)=\hat{U}(a_1\otimes a_2\otimes 1_{Q_0})\hat{U}^*=
a_1\otimes U(a_2\otimes 1_{Q_0})U^*\in \A_1\otimes_{\mathrm{alg}}\A_2\otimes_{\mathrm{alg}}Q_0
$$
which implies \eqref{eq:cond3}.
\end{proof}

\subsection{Invariance of the spectral action}\label{sec:4.2}
The dynamics of a unital spectral triple $(\A,\HH,D,\gamma)$
--- with $\gamma=1$ in the odd case ---
is governed by an action functional~\cite{CC97}
$$
S[A,\psi]:=S_b[A]+S_f[A,\psi] \;,
$$
whose variables are 
a self-adjoint one-form $A\in \Omega^{1,s.a.}_D\subset\B(\HH)$
and $\psi$ either in $\HH$ or in $\HH_+:=(1+\gamma)\HH$.
While one uses $\HH$ in Yang-Mills theories, the reduction to $\HH_+$ is employed in
the Standard Model to solve the fermion doubling problem \cite{LMMS96,CM08}.
The fermionic part of the spectral action is either
\begin{equation}\label{eq:Sfa}
S_f[A,\psi]=\inner{\psi,D_A\psi} \;,
\qquad
D_A:=D+A \;,
\end{equation}
or for a real spectral triple
\begin{equation}\label{eq:Sfb}
S_f[A,\psi]:=\inner{J\psi,D_A\psi} \;,
\qquad
D_A:=D+A+\epsilon' JAJ^{-1} \;,
\end{equation}
where $\epsilon'$ is the sign in \eqref{eq:yyy}.
The bosonic part is
$$
S_b[A]=\tr\,f(D_A/\Lambda) \;,
$$
where $D_A$ is either the operator in \eqref{eq:Sfa} or \eqref{eq:Sfb},
and $f$ is a suitable cut-off function (with $\Lambda>0$).
More precisely, $f$ is a smooth approximation of the characteristic function of the interval $[-1,1]$,
so that $f(D_A/\Lambda)$ --- defined via the continuous functional calculus --- is a trace class operator on $\HH$ and $S_b[A]$ is well defined.

In the rest of the section we focus on the fermionic action $S_f$ and
the operator $D_A$ given by \eqref{eq:Sfb}, although all the proofs
can be repeated in the case \eqref{eq:Sfa} as well.

Assume that $Q$ is a CQG with a unitary corepresentation $\hat U$ on $\HH$ commuting with $D$
and $\gamma$, and such that $\mathrm{Ad_{\hat U}}$ maps $\A$ into $\A\otimes_{\text{alg}}Q$
(rather than \eqref{eq:cond3}).
Then $\HH_+$ is preserved by $\hat U$, and for any $1$-form
$A=\sum_ia_i[D,b_i]$, with $a_i,b_i\in\A$, the operator
$\mathrm{Ad_{\hat U}}(A)=\hat{U}(A\otimes 1){\hat{U}}^*=
\sum_i\mathrm{Ad_{\hat U}}(a_i)[D\otimes 1,\mathrm{Ad_{\hat U}}(b_i)]$ is an element
of $\Omega^1_D\otimes_{\text{alg}}Q$. Therefore a coaction of $Q$ on $\Omega^1_D\oplus\HH_+$ is given by
$$
\beta:(A,\psi)\mapsto\bigl(\hat{U}(A\otimes 1){\hat{U}}^*,\hat{U}(\psi \otimes 1)\bigr) \;.
$$
To discuss the (co)invariance of the spectral action we need to extend it to the latter space.
There is a natural way to do it. The inner product $\inner{\,,\,}:\HH_+\otimes\HH_+\to\C$ can be
extended in a unique way to an Hermitian structure $\inner{\,,\,}_Q:\mathcal{M}\otimes\mathcal{M}\to Q$
on the right $Q$-module $\mathcal{M}:=\HH_+\otimes Q$ by the rule $\inner{\psi\otimes q,\psi'\otimes q'}_Q=q^*q'\inner{\psi,\psi'}$.
Unitary (resp.~antiunitary) maps $L$ on $\HH_+$ are extended in a unique way to $Q$-linear
(resp.~antilinear) maps on $\mathcal{M}$ as $L\otimes 1$ (resp.~$L\otimes *$).
The corresponding extension of the spectral action is given by the $Q$-valued functional
\begin{align*}
\tilde S[\tilde A,\tilde \psi] &:=\tilde S_b[\tilde A]+\tilde S_f[\tilde A,\tilde \psi] \;,
\intertext{where}
\tilde S_b[\tilde A] &:= ( \tr_{\HH} \otimes {\rm id} ) \,f(D_{\hspace{-1pt}\tilde A}/\Lambda) \;,\\[4pt]
\tilde S_f[\tilde A,\tilde \psi] &:=\bigl<(J\otimes *)\tilde \psi,D_{\hspace{-1pt}\tilde A}\tilde \psi\bigr>_Q \;,
\end{align*}
and $\tilde A$ is a self-adjoint element of $\Omega^1_D\otimes_{\text{alg}}Q$, $\tilde \psi\in\HH_+\otimes Q$,
$D_{\hspace{-1pt}\tilde A}:=D\otimes 1+\tilde A+\epsilon' (J\otimes*)\tilde A(J\otimes *)^{-1}$.

Here $f(D_{\hspace{-1pt}\tilde A}/\Lambda)$ is defined in the following way:
if $L^2(Q)$ is the GNS representation associated to the Haar state of $Q$, then
$\tilde A+\epsilon' (J\otimes*)\tilde A(J\otimes *)^{-1}$ is a bounded self-adjoint
operator on $\HH\otimes L^2(Q)$ and $D_{\hspace{-1pt}\tilde A}$ is a (unbounded) self-adjoint operator on the Hilbert space $\HH\otimes L^2(Q)$. The operator $f(D_{\hspace{-1pt}\tilde A}/\Lambda)$ is then defined using the continuous functional calculus.

\smallskip

By (co)invariance of the action functional we mean the property
\begin{equation}\label{eq:inv}
\tilde S[\beta(A,\psi)]=S[A,\psi]\cdot 1_Q.
\end{equation}
Notice that 
since $A$ is a self-adjoint $1$-form,
$\tilde A=\hat{U}(A\otimes 1){\hat{U}}^*$ is 
a self-adjoint element of $\Omega^1_D\otimes_{\text{alg}}Q$ as required above
so that $\tilde S[\beta(A,\psi)]$ is well defined.
In the remaining part of the section we discuss the invariance of
the action. We study separately the fermionic and the bosonic part.

\begin{prop} \label{invariance-fermionic}
If $\, \hat U $ satisfies \eqref{eq:cond1} and \eqref{eq:cond2}, then
$$\tilde S_f[\beta(A,\psi)]=S_f[A,\psi]\cdot 1_Q$$
for all $(A,\psi)\in \Omega^{1,s.a.}_D\oplus\HH_+$.
\end{prop}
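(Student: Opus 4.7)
The plan is to verify the pointwise identity $D_{\hspace{-1pt}\tilde A}\hat U(\psi\otimes 1_Q) = \hat U(D_A\psi\otimes 1_Q)$ for $\psi\in\HH_+$, and then to combine it with \eqref{eq:cond2} and the fact that the unitary corepresentation $\hat U$ preserves the $Q$-valued inner product, so that the fermionic action collapses to the original one.

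First, I would apply $D_{\hspace{-1pt}\tilde A}$ to $\hat U(\psi\otimes 1)$ summand by summand. Condition \eqref{eq:cond1} gives $(D\otimes 1)\hat U(\psi\otimes 1) = \hat U(D\psi\otimes 1)$. The middle term unwinds immediately via the definition of $\tilde A$ and the unitarity of $\hat U$: $\tilde A\hat U(\psi\otimes 1) = \hat U(A\otimes 1)\hat U^{*}\hat U(\psi\otimes 1) = \hat U(A\psi\otimes 1)$. For the third term, applying $(J\otimes *)^{-1}$ to \eqref{eq:cond2} yields $(J\otimes *)^{-1}\hat U(\psi\otimes 1) = \hat U(J^{-1}\psi\otimes 1)$, whence
\begin{align*}
(J\otimes *)\tilde A(J\otimes *)^{-1}\hat U(\psi\otimes 1)
&= (J\otimes *)\hat U(A\otimes 1)\hat U^{*}\hat U(J^{-1}\psi\otimes 1) \\
&= (J\otimes *)\hat U(AJ^{-1}\psi\otimes 1) = \hat U(JAJ^{-1}\psi\otimes 1),
\end{align*}
the last equality being another use of \eqref{eq:cond2}. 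Summing the three contributions produces $D_{\hspace{-1pt}\tilde A}\hat U(\psi\otimes 1) = \hat U\bigl((D + A + \epsilon' JAJ^{-1})\psi\otimes 1\bigr) = \hat U(D_A\psi\otimes 1)$.

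Second, the fermionic action calculation is then routine:
\begin{align*}
\tilde S_f[\beta(A,\psi)]
&= \bigl\langle (J\otimes *)\hat U(\psi\otimes 1),\,\hat U(D_A\psi\otimes 1)\bigr\rangle_Q \\
&= \bigl\langle \hat U(J\psi\otimes 1),\,\hat U(D_A\psi\otimes 1)\bigr\rangle_Q \\
&= \bigl\langle J\psi\otimes 1,\,D_A\psi\otimes 1\bigr\rangle_Q = \inner{J\psi,\,D_A\psi}\cdot 1_Q = S_f[A,\psi]\cdot 1_Q,
\end{align*}
where the second equality is again \eqref{eq:cond2} applied to the bra and the third uses that $\hat U$, being a unitary element of $\M(\K(\HH)\otimes Q)$, preserves the $Q$-valued inner product on the Hilbert $Q$-module $\HH\otimes Q$.

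The main subtlety is avoiding the temptation to phrase matters as an operator identity $D_{\hspace{-1pt}\tilde A} = \hat U(D_A\otimes 1)\hat U^*$, which is awkward because $(J\otimes *)\tilde A(J\otimes *)^{-1}$ need not be right $Q$-linear; evaluating directly on the vector $\hat U(\psi\otimes 1)$ sidesteps the interplay between the antilinear operator $J\otimes *$ and the $Q$-module structure. Finally, since $\hat U$ commutes with $\gamma\otimes 1$ by \eqref{eq:cond1}, it preserves the subspace $\HH_+\otimes Q$, so $\beta(A,\psi)$ lies in the domain in which $\tilde S_f$ is defined.
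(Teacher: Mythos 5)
Your proof is correct and is essentially the paper's argument: the paper records your pointwise identity as the operator identity $D_{\hat U(A\otimes 1)\hat U^*}=\hat U(D_A\otimes 1)\hat U^*$ (its equation \eqref{eq:UD}) and then performs the same inner-product computation using \eqref{eq:cond2} and the unitarity of $\hat U$. Your worry about the operator-level formulation is unfounded --- conjugating a right $Q$-linear operator by the $Q$-antilinear map $J\otimes *$ again yields a right $Q$-linear operator, so the identity \eqref{eq:UD} makes perfectly good sense --- but your pointwise verification is equally valid and changes nothing of substance.
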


\begin{proof}
This is a simple algebraic identity. Since $\hat U$ commutes with $D$ and $J\otimes *$,
we have
\begin{equation}\label{eq:UD}
D_{\hat U(A\otimes 1)\hat U^*}=
D\otimes 1+\hat U(A\otimes 1)\hat U^*+\epsilon' (J\otimes*)\hat U(A\otimes 1)\hat U^*(J\otimes *)^{-1}
=\hat U(D_A\otimes 1)\hat U^*.
\end{equation}
Thus,
\begin{align*}
\tilde S_f[\beta(A,\psi)] &=\bigl<(J\otimes *)\hat
U(\psi\otimes 1_Q),D_{\hat U(A\otimes 1)\hat U^*}\hat U(\psi\otimes 1_Q)\bigr>_Q \\
&=\bigl<\hat U(J\psi\otimes 1_Q),\hat U(D_A\psi\otimes 1_Q)\bigr>_Q \\
&=\inner{J\psi,D_A\psi}\cdot 1_Q  =S_f[A,\psi]\cdot 1_Q \;,
\end{align*}
by the unitarity of $ \hat U. $
\end{proof}

For the rest of the subsection, we will assume that $(\A,\HH,D,J,\gamma)$ is the product of two real spectral triples, one of them being even and finite-dimensional. In fact, we will use the notations in Subsection \ref{sec:5a}. Moreover we assume that $\hat U:=1\otimes U$, where $ U $ is a unitary corepresentation of the compact quantum group $ Q $ such that $ ( Q, U ) $ coacts by orientation and real structure preserving isometries on the finite dimensional spectral triple $(\A_2,\HH_2,D_2,\gamma_2,J_2) .$ Under these assumptions, we now establish the invariance for the bosonic part.

\begin{lemma}\label{lemma:traceB}
For any trace-class operator $L$ on $\HH=\HH_1\otimes\HH_2$
$$
( \tr_{\HH} \otimes {\rm id} ) \hat{U}(L\otimes 1)\hat{U}^*=\tr_{\HH}(L)\cdot 1_Q\ .
$$
\end{lemma}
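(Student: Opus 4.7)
The plan is to reduce the claim to a statement purely about the corepresentation $U$ on the finite-dimensional space $\HH_2$, then invoke Lemma \ref{lemma:trace}. Since $\hat U = 1 \otimes U$ factorizes, and trace-class operators on $\HH_1 \otimes \HH_2$ are norm-limits of sums of elementary tensors $L_1 \otimes m$ with $L_1 \in \K(\HH_1)$ trace-class and $m \in \B(\HH_2)$, bilinearity and continuity of the trace reduce the problem to showing
$$
(\tr_{\HH_2} \otimes {\rm id})\, U(m \otimes 1_Q)U^* = \tr_{\HH_2}(m)\cdot 1_Q
$$
for every $m \in \B(\HH_2)$. By Lemma \ref{lemma:trace} (applied to $B = U$ viewed as an $N \times N$ matrix over $Q$, where $N = \dim \HH_2$), this identity is equivalent to $U^t$ being unitary, i.e.\ to $U$ being biunitary.

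The next step is to extract biunitarity of $U$ from the real structure condition \eqref{eq:cond2}. Fix an orthonormal basis $\{e_i\}$ of $\HH_2$ and write $U = \sum_{ij} e_{ij} \otimes u_{ij}$; let $J_{ki} \in \C$ be defined by $J_2 e_i = \sum_k J_{ki} e_k$ and assemble these into a complex matrix $\mathbf{J} = (J_{ki})$. Unwinding \eqref{eq:cond2} on basis vectors $\xi = e_l$ and comparing coefficients of $e_k$ yields the identity
$$
\sum_i J_{ki}\,u_{il}^* \;=\; \sum_m u_{km}\,J_{ml},
$$
i.e.\ $\mathbf{J}\,\bar U = U\,\mathbf{J}$ as matrices in $M_N(Q)$, where $\bar U = (U^t)^*$ is the entry-wise adjoint. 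Moreover the fact that $J_2$ is an antiunitary isometry gives $\sum_k \bar J_{ki}J_{kj} = \langle J_2 e_i, J_2 e_j\rangle = \overline{\langle e_i, e_j\rangle} = \delta_{ij}$, so $\mathbf{J}^*\mathbf{J} = I$ and $\mathbf{J}$ is a unitary complex matrix with $\mathbf{J}^{-1} = \mathbf{J}^*$.

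Biunitarity of $U$ is then immediate: from $\bar U = \mathbf{J}^{-1} U \mathbf{J}$,
$$
\bar U^*\bar U \,=\, \mathbf{J}^* U^* (\mathbf{J}^{-1})^* \mathbf{J}^{-1} U \mathbf{J} \,=\, \mathbf{J}^* U^* U \mathbf{J} \,=\, \mathbf{J}^*\mathbf{J} \,=\, I,
$$
and symmetrically $\bar U\bar U^* = I$; hence $\bar U = (U^t)^*$ is unitary, which is exactly the condition that $U^t$ is unitary. Combining this with the reduction of the first paragraph and Lemma~\ref{lemma:trace} finishes the proof.

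The main obstacle I expect is purely bookkeeping: carefully translating the antilinear relation \eqref{eq:cond2}, which lives on $\HH_2 \otimes Q$ and mixes the antilinear action of $J_2$ with the $C^*$-involution on $Q$, into a clean complex-matrix identity in $M_N(Q)$, and recognizing that the matrix $\mathbf{J}$ (as opposed to the antilinear operator $J_2$) is literally unitary in the ordinary sense. Everything else is a short calculation, and the conclusion does not depend on the sign $\epsilon$ appearing in $J_2^2 = \epsilon\,1$.
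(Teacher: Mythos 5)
Your proof is correct and follows essentially the same route as the paper's: decompose the trace-class operator into elementary tensors, reduce to the identity $(\tr_{\HH_2}\otimes\mathrm{id})\,U(m\otimes 1_Q)U^*=\tr_{\HH_2}(m)\cdot 1_Q$ on the finite-dimensional factor, and invoke Lemma~\ref{lemma:trace}. The one substantive difference is that you also verify the hypothesis of Lemma~\ref{lemma:trace} --- that $U^t$ is unitary --- by extracting the matrix identity $\mathbf{J}\bar U=U\mathbf{J}$ from \eqref{eq:cond2} and using the unitarity of the complex matrix $\mathbf{J}$ of the antiunitary $J_2$; the paper applies Lemma~\ref{lemma:trace} without comment, implicitly taking biunitarity of $U$ for granted (it is established only later, in Lemma~\ref{lemma:U11th}, for the specific corepresentation of the Standard Model triple, not in the generality of Section~\ref{sec:5}). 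So your argument closes a small gap, and your derivation of biunitarity from the real-structure condition is correct. One cosmetic point: since $\HH_2$ is finite dimensional, every trace-class operator on $\HH_1\otimes\HH_2$ is already a \emph{finite} sum of elementary tensors $L_1\otimes m$ with $L_1$ trace-class, so the appeal to norm-limits (under which the trace would not even be continuous) is unnecessary; the paper uses the finite-sum decomposition directly, and so should you.
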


\begin{proof}
Let $L=L_1\otimes L_2$ with $L_1\in\mathcal{L}^1(\HH_1)$ and $L_2\in\B(\HH_2).$ Since
$$
\hat U(L\otimes 1)\hat U^*=L_1\otimes U(L_2\otimes 1)U^*,
$$
by Lemma \ref{lemma:trace}, we have:
\begin{align*}
( \tr_{\HH_1\otimes\HH_2} \otimes {\rm id} ) \hat{U}(L\otimes 1)\hat{U}^* &=
\tr_{\HH_1}(L_1)\cdot ( \tr_{\HH_2} \otimes {\rm id} )\,U(L_2\otimes 1)U^*\cdot 1_Q \\
&=\tr_{\HH_1\otimes\HH_2}(L)\cdot 1_Q \;.
\end{align*}
Since $\HH_2$ is finite dimensional, any element of $\mathcal{L}^1(\HH_1\otimes\HH_2)$ is
a finite sum of elements of the form $L:=L_1\otimes L_2$, with $L_1\in\mathcal{L}^1(\HH_1)$ and $L_2 \in\B(\HH_2), $ and thus by the linearity of the trace, the proof is finished.
\end{proof}

\begin{prop} \label{invariance-bosonic}
For any $A\in \Omega^{1,s.a.}_D$,
$\tilde S_b[\mathrm{Ad_{\hat{U}}}(A)]=S_b[A]\cdot 1_Q$.
\end{prop}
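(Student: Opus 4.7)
The plan is to combine the operator identity for the conjugated gauged Dirac operator derived in the fermionic case, the unitary invariance of the continuous functional calculus, and Lemma~\ref{lemma:traceB}.

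First I would recall identity~\eqref{eq:UD} established in the proof of Proposition~\ref{invariance-fermionic}: setting $\tilde A:=\mathrm{Ad_{\hat U}}(A)=\hat U(A\otimes 1)\hat U^*$, one has $D_{\tilde A}=\hat U(D_A\otimes 1)\hat U^*$ as self-adjoint operators on $\HH\otimes L^2(Q)$. Since $\hat U$ is a genuine unitary on the Hilbert space $\HH\otimes L^2(Q)$, unitary conjugation commutes with the continuous functional calculus, so
$$
f(D_{\tilde A}/\Lambda)=f\bigl(\hat U((D_A/\Lambda)\otimes 1)\hat U^*\bigr)=\hat U\bigl(f(D_A/\Lambda)\otimes 1\bigr)\hat U^*.
$$

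Next, $D_A=D+A+\epsilon' JAJ^{-1}$ differs from $D$ by a bounded perturbation and therefore still has compact resolvent, so $f(D_A/\Lambda)$ is a trace-class operator on $\HH$ by the rapid decay of $f$. I would then apply Lemma~\ref{lemma:traceB} with $L=f(D_A/\Lambda)$ to conclude
$$
\tilde S_b[\tilde A]=(\tr_{\HH}\otimes\mathrm{id})\,\hat U\bigl(f(D_A/\Lambda)\otimes 1\bigr)\hat U^*=\tr_{\HH}\bigl(f(D_A/\Lambda)\bigr)\cdot 1_Q=S_b[A]\cdot 1_Q,
$$
which is the claimed invariance.

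The only mildly delicate point is making the identity $f(\hat U X\hat U^*)=\hat U f(X)\hat U^*$ rigorous in the Hilbert-module / multiplier-algebra picture used to define $\tilde S_b$. Since $\hat U$ is represented as an honest unitary on $\HH\otimes L^2(Q)$ and $D_A\otimes 1$ is self-adjoint there, this is simply the standard spectral-theoretic fact that unitary conjugation commutes with continuous functional calculus, and presents no real obstacle. Beyond this verification, the proof is a three-step reduction to Lemma~\ref{lemma:traceB}.
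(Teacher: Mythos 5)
Your proof is correct and follows essentially the same route as the paper: the identity \eqref{eq:UD}, the fact that unitary conjugation commutes with the continuous functional calculus, and then an application of Lemma~\ref{lemma:traceB} to $L=f(D_A/\Lambda)$. The extra remark about $f(D_A/\Lambda)$ being trace class is a welcome (if implicit in the paper) justification, but the argument is the same.
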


\begin{proof}
From \eqref{eq:UD} we have
\begin{align*}
\tilde S_b[\hat U(A\otimes 1)\hat U^*] &= ( \tr_{\HH} \otimes {\rm id} ) \,f(D_{\hat U(A\otimes 1)\hat U^*}/\Lambda) \\
&= ( \tr_{\HH} \otimes {\rm id} ) \,f\bigl(\hat U(D_A\otimes 1)\hat U^*/\Lambda\bigr) \;.
\end{align*}
By continuous functional calculus,
$$
f\bigl(\hat U(D_A\otimes 1)\hat U^*/\Lambda\bigr)=\hat Uf\bigl((D_A\otimes 1)/\Lambda\bigr)\hat U^*
=\hat U\bigl(f(D_A/\Lambda)\otimes 1\bigr)\hat U^*
$$
and applying Lemma \ref{lemma:traceB} to the trace-class operator $L:=f(D_A/\Lambda)$ we get
\begin{align*}
\tilde S_b[\hat U(A\otimes 1)\hat U^*] &= ( \tr_{\HH} \otimes {\rm id} ) \,\hat{U}(L\otimes 1)\hat{U}^* \\
&=\tr_{\HH}(L)\cdot 1_Q \equiv \tr_{\HH}f(D_A/\Lambda)\cdot 1_Q \\
&=S_b[A]\cdot 1_Q \;,
\end{align*}
which concludes the proof.
\end{proof}

\begin{prop}
The bosonic and the fermionic part of the spectral action of the Standard Model are preserved by the compact
quantum group
$\widetilde{\rule{0pt}{7pt}\smash[t]{\mathrm{QISO}}}\rule{0pt}{8pt}^+(B_F,H_F,D_F,\gamma_F,J_F)$.
\end{prop}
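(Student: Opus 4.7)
The proof is essentially an assembly of the three preparatory results already established: Lemma \ref{lemma:product}, Proposition \ref{invariance-fermionic}, and Proposition \ref{invariance-bosonic}. The plan is to take $(\A_1,\HH_1,D_1,\gamma_1,J_1)$ to be the canonical spectral triple of the compact Riemannian spin manifold $M$ and $(\A_2,\HH_2,D_2,\gamma_2,J_2)$ to be the finite spectral triple $(B_F,H_F,D_F,\gamma_F,J_F)$. The latter is even and finite-dimensional, so the hypotheses of Lemma \ref{lemma:product} are satisfied. Let $Q:=\qisot(D_F)$ with its universal corepresentation $U$ on $H_F$, and set $\hat U:=1\otimes U$ on $L^2(M,S)\otimes H_F$.

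By Lemma \ref{lemma:product}, the pair $(Q,\hat U)$ coacts on the product spectral triple by orientation and real structure preserving isometries; in particular $\hat U$ satisfies both \eqref{eq:cond1} and \eqref{eq:cond2}. This is exactly the hypothesis needed for Proposition \ref{invariance-fermionic}, which therefore yields
\[
\tilde S_f[\beta(A,\psi)] = S_f[A,\psi]\cdot 1_Q
\]
for every self-adjoint $A\in\Omega^{1,s.a.}_D$ and every $\psi\in\HH_+$, giving the fermionic invariance.

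For the bosonic part, I invoke Proposition \ref{invariance-bosonic}. That proposition is stated under precisely the running assumption of this paragraph — namely that $\hat U=1\otimes U$ with $U$ a corepresentation of a CQG coacting by orientation and real structure preserving isometries on a finite-dimensional even real spectral triple — so its hypotheses are automatically met here. It yields $\tilde S_b[\mathrm{Ad}_{\hat U}(A)]=S_b[A]\cdot 1_Q$ for every $A\in\Omega^{1,s.a.}_D$.

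Combining the two invariances gives \eqref{eq:inv} for $Q=\qisot(D_F)$ and the proposition follows. There is no real obstacle in the argument: all the analytic content (the trace identity via Lemma \ref{lemma:trace} and Lemma \ref{lemma:traceB}, and the compatibility of the functional calculus with the unitary conjugation by $\hat U$) has already been extracted into the preparatory results, so the final step is purely a verification of hypotheses.
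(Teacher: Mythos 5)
Your proposal is correct and follows essentially the same route as the paper's own (very brief) proof: the paper likewise verifies that $\qisot(D_F)$ with $\hat U=1\otimes U$ satisfies the hypotheses of Lemma \ref{lemma:product} and of Propositions \ref{invariance-fermionic} and \ref{invariance-bosonic}, and concludes. Your version merely makes the verification of hypotheses more explicit.
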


\begin{proof}
The compact quantum group
$Q:=\widetilde{\rule{0pt}{7pt}\smash[t]{\mathrm{QISO}}}\rule{0pt}{8pt}^+
(B_F,H_F,D_F,\gamma_F,J_F)$ 
has a corepresentation preserving $\HH_+$ and it satisfies the hypothesis of Lemma \ref{lemma:product}
and Prop.~\ref{invariance-fermionic} and \ref{invariance-bosonic}, hence the result
follows.
\end{proof}

%%% ======================================================================

\section{Some remarks on real $*$-algebras and their symmetries}\label{sec:6}

In Sec.~\ref{sec:iso} we computed the quantum isometry group of the finite
part of the Standard Model by replacing the real $ C^* $-algebra $A_F$ with
the complex $ C^* $-algebra $B_F$. Here we explain what happens if we
work with $ A_F. $

Any real $*$-algebra $\A$ (i.e.~unital, associative, involutive algebra over $\R$)
can be thought of as the fixed point
subalgebra of its complexification $\A_{\C}=\A\otimes_{\R}\C$
with respect to the involutive (conjugate-linear) real $*$-algebra
automorphism $ \sigma $ defined by
\begin{equation}\label{eq:cansigma}
\sigma(a\otimes_{\R}z)=a\otimes_{\R}\bar{z}\quad\forall\;a\in\A,z\in\C \;,
\end{equation}
that is
$$
\A=\{a\in\A_{\C}:\sigma(a)=a\} \;.
$$
A crucial observation is that we can characterize the automorphisms
of $\A$ as those automorphisms of $\A_{\C}$ which commute with $\sigma$,
as proved in the following lemma.

\begin{lemma}\label{lemma:autSigma}
For any real $*$-algebra $\A$,
\begin{equation}\label{eq:autsigma}
\mathrm{Aut}(\A)\simeq\bigl\{\phi\in\mathrm{Aut}(\A_{\C}) \;:\;\sigma\phi=\phi\hspace{1pt}\sigma\bigr\} \;.
\end{equation}
\end{lemma}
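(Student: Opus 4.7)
The plan is to exhibit explicit mutually inverse maps in both directions. Define
$\Phi:\mathrm{Aut}(\A)\to\bigl\{\phi\in\mathrm{Aut}(\A_{\C}):\sigma\phi=\phi\sigma\bigr\}$
by $\Phi(\psi)(a\otimes_{\R}z):=\psi(a)\otimes_{\R}z$, extended $\C$-linearly. First I would check that $\Phi(\psi)$ is a well-defined $\C$-linear $*$-automorphism of $\A_{\C}$: the assignment factors through the real tensor product since $\psi$ is $\R$-linear, multiplicativity and $*$-preservation follow from the corresponding properties of $\psi$ and the definition of the product and involution on $\A_{\C}$, and bijectivity is immediate from the fact that $\Phi(\psi^{-1})$ is an inverse. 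The commutation $\sigma\Phi(\psi)=\Phi(\psi)\sigma$ is clear from \eqref{eq:cansigma}.

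In the opposite direction, define $\Psi(\phi):=\phi|_{\A}$. The key step is to observe that when $\sigma\phi=\phi\sigma$, any $a\in\A$ satisfies $\sigma(\phi(a))=\phi(\sigma(a))=\phi(a)$, hence $\phi(a)\in\A$; applying the same argument to $\phi^{-1}$ (which also commutes with $\sigma$) shows that $\phi|_{\A}$ is a bijection of $\A$. Since $\phi$ is a $\C$-linear (hence $\R$-linear) $*$-homomorphism of $\A_{\C}$, its restriction is an $\R$-linear $*$-automorphism of $\A$.

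Finally I would check that $\Phi$ and $\Psi$ are mutually inverse. Obviously $\Psi\circ\Phi=\mathrm{id}$ on $\mathrm{Aut}(\A)$. For $\Phi\circ\Psi=\mathrm{id}$, note that every element of $\A_{\C}$ is a finite sum $\sum_k a_k\otimes_{\R} z_k$ with $a_k\in\A$ (writing $a_k\otimes_{\R}1\in\A$ and pulling out the scalars by $\C$-linearity); then
\[
\phi\Bigl(\sum\nolimits_k a_k\otimes_{\R} z_k\Bigr)=\sum\nolimits_k z_k\hspace{1pt}\phi(a_k\otimes_{\R}1)=\sum\nolimits_k\Psi(\phi)(a_k)\otimes_{\R}z_k=\Phi(\Psi(\phi))\Bigl(\sum\nolimits_k a_k\otimes_{\R} z_k\Bigr),
\]
where we used that $\phi(a_k\otimes_{\R}1)\in\A$ by the previous paragraph. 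Both maps are clearly compatible with composition, hence group homomorphisms, and the claimed isomorphism follows.

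I do not anticipate a serious obstacle: the only substantive point is recognizing that $\sigma$-equivariance of $\phi$ forces $\phi(\A)\subset\A$, after which everything reduces to the universal property of the complexification.
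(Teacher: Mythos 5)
Your proposal is correct and follows essentially the same route as the paper: complexify an automorphism of $\A$ by $\psi\mapsto\psi\otimes_{\R}\mathrm{id}$, and conversely use $\sigma$-equivariance to show that $\phi$ preserves the fixed-point subalgebra $\A\simeq\A\otimes_{\R}1$, then check the two constructions are mutually inverse. Your version is slightly more careful on a couple of points the paper leaves implicit (bijectivity of the restriction via $\phi^{-1}$, and the explicit verification of $\Phi\circ\Psi=\mathrm{id}$), but the argument is the same.
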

\begin{proof}
If $\varphi$ is any (real) $*$-algebra morphism of $\A$, $\phi(a\otimes_{\R}z):=
\varphi(a)\otimes_{\R}z$ defines a (complex) $*$-algebra morphism of $\A_{\C}$
clearly satisfying $\sigma\phi=\phi\hspace{1pt}\sigma$. The map $\varphi\mapsto\phi$
gives an inclusion of the left hand side of \eqref{eq:autsigma} into the right hand side.
Conversely, if $\phi\in\mathrm{Aut}(\A_{\C})$ satisfies
$\sigma\phi=\phi\hspace{1pt}\sigma$, then it maps the real subalgebra
$\A\simeq\A\otimes_{\R}1\subset\A_{\C}$ into itself, since
$$
\sigma\phi(a\otimes_{\R}1)=\phi\hspace{1pt}\sigma(a\otimes_{\R}1)=\phi(a\otimes_{\R}1)
$$
for any $a\in\A$. Therefore, we can define an element $\varphi\in\mathrm{Aut}(\A)$
by $\varphi(a)\otimes_{\R}1:=\phi(a\otimes_{\R}1)$.

The two group homomorphisms $\varphi\mapsto\phi$ and $\phi\mapsto\varphi$
are the inverses of each other and thus, we have the isomorphism in \eqref{eq:autsigma}.
\end{proof}

\noindent
From a dual point of view,
if $G=\mathrm{Aut}(\A)$, the right coaction of $C(G)$ on $\A_{\C}$
is the map $\alpha:\A_{\C}\to\A_{\C}\otimes C(G)\simeq C(G;\A_{\C})$ defined by
$$ ( {\rm id} \otimes {\rm ev}_{\phi} ) \alpha(a):=\phi(a), ~ \phi ~ \in  G, ~ a ~ \in  \A_{\C} \;.
$$
We can rephrase Lemma \ref{lemma:autSigma} as follows.

\begin{lemma}
For a finite dimensional real $ C^* $-algebra $\A$, the condition $\sigma\,\phi=\phi\,\sigma\;\forall\;\phi\in G$ is equivalent to
$$
(\sigma\otimes \ast_{C ( G ) })\alpha=\alpha\sigma \;.
$$
\end{lemma}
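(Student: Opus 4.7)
The plan is to unwind both sides of the claimed equivalence by pairing with the evaluation functionals $\mathrm{ev}_\phi$ at each $\phi\in G$, using the standard identification $\A_{\C}\otimes C(G)\simeq C(G;\A_{\C})$. This identification is automatic because $\A_{\C}$ is finite dimensional, and it is through this identification that $\alpha$ was defined in the first place (as the map sending $a$ to the function $\phi\mapsto\phi(a)$).

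First I would compute what $\sigma\otimes\ast_{C(G)}$ looks like under the above identification. On a simple tensor $a\otimes f$ it produces $\sigma(a)\otimes f^{\ast}$, which corresponds to the function
\[
\phi\longmapsto \overline{f(\phi)}\,\sigma(a)=\sigma\bigl(f(\phi)\,a\bigr),
\]
where the last equality uses the conjugate-linearity of $\sigma$ together with the fact that $\sigma(a)=\sigma(a\otimes_{\R}1)$ has scalar coefficients conjugated. By linearity and continuity this extends to arbitrary $F\in C(G;\A_{\C})$ and gives the clean formula $(\sigma\otimes \ast_{C(G)})(F)=\sigma\circ F$. (The only small point to verify here is that $\sigma\otimes\ast_{C(G)}$, being a tensor product of two conjugate-linear maps, really is a well-defined complex-linear map on the $C^{\ast}$-tensor product; finite-dimensionality of $\A_{\C}$ makes this immediate.)

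Next I would apply this to $\alpha(a)$, which by definition is the function $\phi\mapsto\phi(a)$. The previous step then gives
\[
\bigl[(\sigma\otimes \ast_{C(G)})\alpha(a)\bigr](\phi)=\sigma\bigl(\phi(a)\bigr),
\]
while on the other hand
\[
[\alpha\sigma(a)](\phi)=\phi\bigl(\sigma(a)\bigr).
\]
Finally, since two elements of $C(G;\A_{\C})$ coincide if and only if they agree pointwise on $G$, the equation $(\sigma\otimes \ast_{C(G)})\alpha=\alpha\sigma$ holds in $\A_{\C}\otimes C(G)$ if and only if $\sigma\bigl(\phi(a)\bigr)=\phi\bigl(\sigma(a)\bigr)$ for every $\phi\in G$ and every $a\in\A_{\C}$, which is exactly the condition $\sigma\phi=\phi\sigma$ for all $\phi\in G$. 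The argument is essentially a bookkeeping exercise and I do not foresee any serious obstacle; the only subtle ingredient is the correct interpretation of $\sigma\otimes \ast_{C(G)}$ as described above.
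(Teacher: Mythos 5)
Your argument is correct and is essentially the paper's own proof: both reduce the identity in $\A_{\C}\otimes C(G)\simeq C(G;\A_{\C})$ to its evaluations at each $\phi\in G$ (the paper phrases this as composing with $\mathrm{ev}_\phi$ and invoking that these functionals separate points), use $\mathrm{ev}_\phi\circ\ast_{C(G)}=\ast_{\C}\circ\mathrm{ev}_\phi$ together with the conjugate-linearity of $\sigma$, and conclude pointwise. One tiny slip: $\sigma\otimes\ast_{C(G)}$ is a well-defined \emph{conjugate}-linear (not complex-linear) map, which is exactly what makes $\sigma\bigl(f(\phi)a\bigr)=\overline{f(\phi)}\,\sigma(a)$ work.
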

\begin{proof}
Let $ \alpha_\phi \sigma =  ( \sigma \otimes ev_{\phi} ~ \ast_{C ( G ) } ) \alpha $ and $ \phi \in G, ~ a \in \A_{\C}. $ Let us suppose that $(\sigma\otimes \ast_{C ( G ) })\alpha=\alpha\sigma $.  Then $ \sigma \phi ( a ) = ( {\rm id} \otimes  ev_{\phi} ) \alpha \sigma ( a )  = ( \sigma \otimes  ev_{\phi}~ \ast_{C ( G ) } ) \alpha ( a ) = ( \sigma \otimes \ast_{\C} ~ {\rm ev}_{\phi} ) \alpha ( a ) = \phi \sigma ( a ) $  by the antilinearity of $ \sigma. $
Conversely, if $ \sigma\,\phi=\phi\,\sigma\;\forall\;\phi\in G $ then for all $ \phi, ~ ( {\rm id} \otimes {\rm ev}_{\phi} )  \alpha ( \sigma ( a ) ) = ( \sigma \otimes {\rm ev}_{\phi} ) ( \alpha ( a ) ).$
Thus, $  ( \sigma \otimes  {\rm ev}_{\phi} ~ \ast_{C ( G ) } ) \alpha ( a ) = ( \sigma \otimes \ast_{\C} ~ {\rm ev}_{\phi} ) \alpha ( a ) = \sigma ( ( {\rm id} \otimes {\rm ev}_{\phi} ) \alpha ( a ) ) = \sigma \phi ( a ) = \phi \sigma ( a ) = ( {\rm id} \otimes {\rm ev}_{\phi} )  \alpha ( \sigma ( a ) ).$
As $ \{ {\rm ev}_{\phi} : \phi \in G \}  $ separates points on $ G, $ this proves $ (\sigma\otimes \ast_{C ( G ) })\alpha=\alpha\sigma $.
\end{proof}

Motivated by this lemma, we consider the category $\catB$ of CQGs coacting by orientation and real structure
preserving isometries via a unitary corepresentation $U$ (in the sense of Def.~\ref{def:qisot})
on the spectral triple $(B_F,H_F,D_F,\gamma_F,J_F)$ whose adjoint coaction $\mathrm{Ad_U}$ can be extended to a
coaction $\alpha$ on $(A_F)_{\C}=A_F\otimes_{\R}\C$ satisfying
\begin{equation}\label{eq:alphasigma}
(\sigma\otimes *)\alpha=\alpha\sigma \;.
\end{equation}
We notice that it is a subcategory
of $\catA$: objects of $\catB$ are those objects of $\catA$ compatible with $\sigma$ in the
sense explained above, and the morphisms in the two categories are the same.

Thus any object, say $ Q, $ of $\catB$ satisfies the relations of the universal object $\qisot( D_F )$ of $\catA$  in Prop.~\ref{prop:main}. In the rest of this subsection, with a slight abuse of notation, we will continue to denote the generators of $ Q $ by the same symbols as in Prop.~\ref{prop:main}.

\begin{thm}\label{thm:real}
A compact quantum group $Q$ is an object in $\catB$ if and only if the generators satisfy
\begin{equation}\label{eq:realrel}
(T_m)_{jk}(T_m)_{j'k'}^*(T_m)_{j''k''}=
(T_m)_{j''k''}(T_m)_{j'k'}^*(T_m)_{jk}
\end{equation}
for all $m=1,\ldots,n$ and all $j,j',j'',k,k',k''\in\{1,2,3\}$.
\end{thm}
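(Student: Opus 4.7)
By definition of $\catB$, $Q$ is in $\catB$ iff the adjoint coaction $\alpha=\mathrm{Ad}_U:B_F\to B_F\otimes Q$ admits an extension to a $*$-homomorphism coaction $\tilde\alpha:(A_F)_\C\to(A_F)_\C\otimes Q$ satisfying $(\sigma\otimes *)\tilde\alpha=\tilde\alpha\sigma$. Under the decomposition $(A_F)_\C\simeq\C\oplus\C\oplus M_2(\C)\oplus M_3(\C)\oplus M_3(\C)$ induced by \eqref{eq:isoinv}, one computes directly that $\sigma$ swaps the two $\C$ summands with conjugation, acts as $q\mapsto J_2\bar q J_2^{-1}$ on $M_2(\C)$ for an appropriate $J_2\in M_2(\C)$ fixing the quaternion subalgebra, and swaps the two $M_3(\C)$ summands with entry-wise conjugation; the kernel of $\pi_\C$ is exactly the second $M_3(\C)$ summand.

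The first step of the plan is to show that such an extension $\tilde\alpha$, if it exists, is uniquely determined. Compatibility $(\pi_\C\otimes\mathrm{id})\tilde\alpha=\alpha\pi_\C$ allows correction terms valued in $\ker(\pi_\C)\otimes Q$ on each ``visible'' summand, but the interplay of $\sigma$-compatibility with $\pi_\C$-compatibility forces them all to vanish (the image of a kernel-correction under $\sigma\otimes *$ lies in the first, visible, $M_3(\C)$, contradicting $\pi_\C$-compatibility). The $\sigma$-compatibility then transports the known coaction on the visible $M_3(\C)$ to the invisible one: using \eqref{eq:Ucoact},
\[
\tilde\alpha(0,e_{ij}) \;=\; (\sigma\otimes *)\tilde\alpha(e_{ij},0) \;=\; \sum_{k,l=1}^{3}(0,e_{kl})\otimes (T_1^*)_{jl}(T_1)_{ki}.
\]

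Multiplicativity on the visible summands and between distinct direct-sum components is automatic from $Q\in\catA$ and the block structure, so the only nontrivial axiom is multiplicativity on the invisible $M_3(\C)$. Spelling out $\tilde\alpha(0,e_{ij})\tilde\alpha(0,e_{pq})=\delta_{jp}\tilde\alpha(0,e_{iq})$ with the formula above gives the relation
\[
\sum_{l=1}^{3}(T_1^*)_{jl}(T_1)_{ki}(T_1^*)_{qn}(T_1)_{lp}\;=\;\delta_{jp}(T_1^*)_{qn}(T_1)_{ki}\qquad(\ast)
\]
for all indices; involutivity on the invisible summand together with coassociativity and density for $\tilde\alpha$ are immediate from the formula and the corresponding properties of $\alpha$.

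The last step is to establish the equivalence of $(\ast)$ with \eqref{eq:realrel}. For necessity, multiply $(\ast)$ on the right by $(T_1^*)_{pr}$ and sum over $p$: the biunitarity identity $\sum_p(T_1)_{lp}(T_1^*)_{pr}=\delta_{lr}$ (which uses that $T_1^t$ is unitary) collapses the LHS to $(T_1^*)_{rj}(T_1)_{ki}(T_1^*)_{nq}$ and the RHS to $(T_1^*)_{nq}(T_1)_{ki}(T_1^*)_{rj}$; taking adjoints yields \eqref{eq:realrel} for $m=1$. Since \eqref{eq:qisotB} gives $(T_m^*)_{jl}(T_m)_{ki}=(T_1^*)_{jl}(T_1)_{ki}$ for every $m$, the relation $(\ast)$ still holds with $T_m$ throughout, and the same manipulation delivers \eqref{eq:realrel} for every $m$. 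Conversely, \eqref{eq:realrel} in its adjoint form $(T_1^*)_{lj}(T_1)_{ki}(T_1^*)_{qn}=(T_1^*)_{qn}(T_1)_{ki}(T_1^*)_{lj}$ lets one pull $(T_1)_{ki}(T_1^*)_{qn}$ past $(T_1^*)_{lj}$ inside the sum of $(\ast)$; collapsing via $\sum_l(T_1^*)_{lj}(T_1)_{lp}=\delta_{jp}$ (unitarity of $T_1$) recovers the RHS of $(\ast)$. The main obstacle I anticipate is the uniqueness step, where one must carefully combine $\sigma$- and $\pi_\C$-compatibility on the two halves of $M_3(\C)\oplus M_3(\C)$ to rule out kernel-corrections on every summand and pin down $\tilde\alpha$ on the invisible one; the equivalence $(\ast)\Leftrightarrow$\eqref{eq:realrel} is then a clean manipulation of biunitary-matrix identities.
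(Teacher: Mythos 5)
Your proposal is correct and follows essentially the same route as the paper: you use $\sigma$-compatibility to transport the known coaction on the visible $M_3(\C)$ to the copy sitting in $\ker\pi_{\C}$, reduce everything to multiplicativity of the resulting formula there, and collapse the quartic relation so obtained to \eqref{eq:realrel} via biunitarity. The only cosmetic differences are that you verify the coaction axioms by hand where the paper invokes Theorem 4.1 of \cite{Wan98a} (whose condition (4.2), the transposed relation, you leave implicit, but it follows from \eqref{eq:realrel} exactly as the paper notes), and that you supply an explicit uniqueness argument for the extension which the paper takes for granted.
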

\begin{proof}
The real algebra $A_F=\C\oplus\Q\oplus M_3(\C)$ is the fixed point subalgebra of
$(A_F)_{\C}\simeq\C\oplus\C\oplus M_2(\C)\oplus M_3(\C)\oplus M_3(\C)$
with respect to the automorphism
$$
\sigma(\lambda,\lambda',q,m,m')=
(\bar\lambda\hspace{1pt}',\bar\lambda,\sigma_2\bar q\sigma_2,\bar m\hspace{1pt}',\bar m) \;,
$$
where $\sigma_2$ is the second Pauli matrix:
$$
\sigma_2:=
\begin{mat}
0 & -i \\ i & 0
\end{mat} \;
$$
It is easy to check that $q\in M_2(\C)$ satisfies $\sigma_2\bar q\sigma_2=q$
if an only if it is of the form \eqref{eq:XXX}, and that under the isomorphism
\eqref{eq:isoinv} $\C$ is identified with the real subalgebra of $\C\oplus\C$
with elements $(\lambda,\bar\lambda)$ and $M_3(\C)$ with the
real subalgebra of $M_3(\C)\oplus M_3(\C)$ with elements $(m,\bar m)$.

The coaction on the factor $B_F\subset (A_F)_{\C}$ is given by \eqref{eq:Ucoact},
and an extension $\widetilde{\mathrm{Ad_U}}$ to $(A_F)_{\C}$ satisfying \eqref{eq:alphasigma} exists if and only if
\begin{align*}
\widetilde{\mathrm{Ad_U}}(0,0,0,0,e_{ij}) &=(\sigma\otimes *)\widetilde{\mathrm{Ad_U}}\,\sigma(0,0,0,0,e_{ij}) \\
&=(\sigma\otimes *)\widetilde{\mathrm{Ad_U}} (0,0,0,\bar e_{ij},0) \\
&=(\sigma\otimes *)\bigl(\mathrm{Ad_U}(\inner{0,0,0,e_{ij}}),0\bigr) \\
&=(\sigma\otimes *)\sum\nolimits_{k,l=1,2,3}(0,0,0,e_{kl},0)\otimes  ( T_1 )_{ k i}^* (T_1)_{ l j} \\
&=\sum\nolimits_{k,l=1,2,3}(0,0,0,0,e_{kl})\otimes  ( T_1 )_{ l j}^* (T_1)_{ k i} \;.
\end{align*}
The only conditions left to impose is that this extension is a coaction of a CQG. As it is already a coaction on $ B_F, $ we need to impose  it for the coaction on the second copy of $ M_3 ( \C ), $ which has to be preserved by $ \widetilde{\mathrm{Ad_U}}. $ At this point, we note that as $ \widetilde{\mathrm{Ad_U}} $ is an extension of $ \mathrm{Ad_U}, $ which preserves the trace on the first copy of $ M_3 ( \C ), $ the formula $ \widetilde{\mathrm{Ad_U}}(0,0,0,0,e_{ij}) = \sum\nolimits_{k,l=1,2,3}(0,0,0,0,e_{kl})\otimes  ( T_1 )_{ l j}^* (T_1)_{ k i} $ forces $ \widetilde{\mathrm{Ad_U}} $ to preserve the trace on the second copy of  $ M_3 ( \C ). $ Thus, by Theorem 4.1 of \cite{Wan98a}, it suffices to impose the conditions (4.1-4.5) in that paper with $ a^{kl}_{ij} $ replaced by  $ ( T_m )_{ l j}^* (T_m)_{ k i}. $ It is easy to check that (4.3-4.5) are automatically satisfied. The only non trivial conditions come from (4.1) and (4.2).

From (4.1), we get
\begin{equation}\label{eq:realrelB}
\sum\nolimits_{v=1}^3(T_m)^*_{vj}(T_m)_{ki}(T_m)_{ls}^*(T_m)_{vr}=
\delta_{jr}(T_m)_{ls}^*(T_m)_{ki}
\end{equation}
From (4.2), we get the same relation with  $(T_m)^t$
instead of $T_m.$  Now we show that \eqref{eq:realrelB} and \eqref{eq:realrel}
are equivalent, which will finish the proof since if $T_m$ satisfies \eqref{eq:realrel},
then $(T_m)^t$ satisfies it too.

If we multiply both sides of \eqref{eq:realrelB} by $(T_m)_{qj}$ from the left
and sum over $j$, we get
$$
\sum\nolimits_{v=1}^3\delta_{vq}(T_m)_{ki}(T_m)_{ls}^*(T_m)_{vr}=
\sum\nolimits_{j=1}^3\delta_{jr}(T_m)_{qj}(T_m)_{ls}^*(T_m)_{ki}
$$
using biunitarity of $T_m$. The last equation is clearly equivalent to \eqref{eq:realrel}.
To prove that \eqref{eq:realrel} implies \eqref{eq:realrelB}, it is enough to multiply
both sides by $(T_m)_{j''k'''}$
from the left, then sum over $j''$ and use the biunitarity of $T_m$ again.
\end{proof}

It is easy to check that \eqref{eq:realrel} defines a Woronowicz $C^*$-ideal, and hence
the quotient of $\qisot( D_F )$ by \eqref{eq:realrel} is a CQG. This leads to the following corollary.

\begin{cor}\label{thm:univ}
Let $\,\qisotr ( D_F )$ be the quantum subgroup of the CQG $\,\qisot ( D_F )$ in Prop.~\ref{prop:main}
defined by the relations \eqref{eq:realrel}.
Then $\qisotr ( D_F )$ is the universal object in the category $\catB$.
\end{cor}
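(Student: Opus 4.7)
The claim splits into three parts: (a) verifying that the relations \eqref{eq:realrel} generate a Woronowicz $C^*$-ideal $I\subset\qisot(D_F)$, so that the quotient is a CQG; (b) showing $\qisotr(D_F):=\qisot(D_F)/I$ lies in $\catB$; and (c) establishing the universal property inside $\catB$. Steps (b) and (c) will be essentially formal consequences of Theorem~\ref{thm:real} together with the universality of $\qisot(D_F)$ in the larger category $\catA$, so the real work is confined to (a).

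For (a), I write the generators of $I$ as
\[
R^{(m)}_{jk,j'k',j''k''}:=(T_m)_{jk}(T_m)_{j'k'}^*(T_m)_{j''k''}-(T_m)_{j''k''}(T_m)_{j'k'}^*(T_m)_{jk},
\]
and use the matrix coproduct together with the fact that $\Delta$ is a $*$-homomorphism to expand $\Delta(R^{(m)}_{jk,j'k',j''k''})$ as a triple sum over $l,l',l''$ of terms of the form $A\otimes C-B\otimes D$, where $A-B=R^{(m)}_{jl,j'l',j''l''}$ and $C-D=R^{(m)}_{lk,l'k',l''k''}$. The elementary identity $A\otimes C-B\otimes D=A\otimes(C-D)+(A-B)\otimes D$ then yields
\[
\Delta(R^{(m)}_{jk,j'k',j''k''})\in \qisot(D_F)\otimes I + I\otimes \qisot(D_F)\subset\ker(\pi_I\otimes\pi_I),
\]
which is the Woronowicz condition. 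This add-and-subtract manoeuvre is the one delicate point in the whole argument; otherwise no structural analysis of $\qisot(D_F)$ is needed.

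For (b), the images of the generators of $\qisot(D_F)$ in the quotient satisfy \eqref{eq:realrel} by construction, so by Theorem~\ref{thm:real} the quotient, equipped with the image of the universal corepresentation $U_0$, is an object of $\catB$. For (c), given $(Q,U)\in\catB\subset\catA$, the universality established in Prop.~\ref{prop:main} supplies a unique CQG morphism $\pi:\qisot(D_F)\to Q$ with $(\mathrm{id}\otimes\pi)(U_0)=U$; by Theorem~\ref{thm:real} the images $\pi((T_m)_{jk})$ in $Q$ satisfy \eqref{eq:realrel}, so $\pi(I)=0$ and $\pi$ factors uniquely through a morphism $\qisotr(D_F)\to Q$ intertwining the corepresentations. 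Uniqueness at the level of $\qisotr(D_F)$ follows from uniqueness at the level of $\qisot(D_F)$ since $\qisot(D_F)\to\qisotr(D_F)$ is surjective, giving the required universal property.
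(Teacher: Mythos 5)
Your proposal is correct and follows essentially the same route as the paper: the paper asserts (without detail) that \eqref{eq:realrel} generates a Woronowicz $C^*$-ideal and then derives the corollary directly from Theorem~\ref{thm:real} combined with the universality of $\qisot(D_F)$ in $\catA$, exactly as in your steps (a)--(c). Your add-and-subtract verification of the Woronowicz condition is the cubic analogue of the computation in Lemma~\ref{lemma:6.10} and correctly fills in the detail the paper leaves to the reader.
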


Motivated by \eqref{eq:realrel}, we give the following definition.

\begin{df}
For a fixed $N$, we call $A_u^*(N)$ the universal unital $C^*$-algebra generated
by a $N\times N$ biunitary $u=((u_{ij}))$ with relations
\begin{equation}\label{eq:fine}
ab^*c=cb^*a \;,\qquad\forall\;a,b,c\in\{ u_{ij},\,i,j=1,\ldots,N\} \;.
\end{equation}
$A_u^*(N)$ is a CQG with coproduct given by $\Delta(u_{ij})=\sum\nolimits_k u_{ik} \otimes u_{kj}$.
\end{df}

We will call $A_u^*(N)$ the $N$-dimensional \emph{half-liberated unitary group}.
This is similar to the half-liberated orthogonal group $A_o^*(N)$,
that can be obtained by imposing the further relation $a=a^*$ for all $a
\in\{ u_{ij},\,i,j,=1,\ldots,N\}$ (cf.~\cite{BV09}).

\begin{rem}
We notice that there are two other possible ways to ``half-liberate'' the free unitary group.
Instead of $ab^*c=cb^*a$ (which by adjunction is equivalent to $a^*bc^*=c^*ba^*$),
one can consider respectively the relation $a^*bc=cba^*$ (which is equivalent to
$abc^*=c^*ba$ and to the adjoints $ab^*c^*=c^*b^*a$ and $a^*b^*c=cb^*a^*$)
or $abc=cba$ (equivalent to $a^*b^*c^*=c^*b^*a^*$) for any triple
$a,b,c\in\{ u_{ij},\,i,j=1,\ldots,N\}$.
\end{rem}

Like $A_o^*(N),$ the projective
version of $A_u^*(N)$ is also commutative, as proved in the next proposition.

\begin{prop}\label{prop:puk}
The CQG $PA_u^*(N) $ is isomorphic to  $ C(PU(N))$.
\end{prop}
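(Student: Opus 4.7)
The plan has three steps. First I would show that $PA_u^*(N)$ is commutative. Since this $C^*$-algebra is generated by the elements $u_{ij}^* u_{kl}$, commutativity reduces to verifying an identity of the form
\[
a^* b c^* d \;=\; c^* d a^* b
\]
for all entries $a,b,c,d$ of $u$. Taking the adjoint of the defining relation \eqref{eq:fine} gives the equivalent relation $a^* b c^* = c^* b a^*$; applying this to the first three factors on the left hand side transforms $a^* b c^* d$ into $c^* b a^* d$. Then, using \eqref{eq:fine} itself in the form $b a^* d = d a^* b$ on the last three factors, one arrives at $c^* d a^* b$, as required.

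Second, I would use this commutativity to build a surjective CQG morphism $\psi: C(PU(N)) \twoheadrightarrow PA_u^*(N)$. The canonical quotient map $A_u(N) \twoheadrightarrow A_u^*(N)$ (whose kernel is generated by the half-liberation relations) sends the biunitary generator to the biunitary generator, hence it restricts to a surjection $PA_u(N) \twoheadrightarrow PA_u^*(N)$ on the $C^*$-subalgebras generated by $u_{ij}^* u_{kl}$. Since the target is commutative by Step 1, this map factors through the abelianization of $PA_u(N)$. By Prop.~\ref{thm:Ban} we have $PA_u(N) \simeq A_{\mathrm{aut}}(M_N(\C))$, whose abelianization is $C(PU(N))$ (the group of trace-preserving automorphisms of $M_N(\C)$ being exactly $PU(N)$ by Skolem-Noether). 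This yields $\psi$, sending $\bar g_{ij} g_{kl}$ to $u_{ij}^* u_{kl}$.

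Third, I would construct the inverse by abelianizing $A_u^*(N)$. Since the relation \eqref{eq:fine} is trivially satisfied in any commutative $*$-algebra, the abelianization of $A_u^*(N)$ is $C(U(N))$, giving a surjective CQG morphism which restricts on projective versions to $\pi: PA_u^*(N) \twoheadrightarrow C(PU(N))$, sending $u_{ij}^* u_{kl}$ to $\bar g_{ij} g_{kl}$. The composition $\pi \circ \psi$ is then the identity on the generating set of $C(PU(N))$, hence on all of $C(PU(N))$. Therefore $\psi$ is injective, and both $\psi$ and $\pi$ are mutually inverse isomorphisms.

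The main obstacle is the commutativity argument in Step 1: one has to find the right sequence of applications of \eqref{eq:fine} and its adjoint. Once this is in hand, the identification with $C(PU(N))$ is a formal consequence of commutativity combined with the universal property of $PA_u(N) \simeq A_{\mathrm{aut}}(M_N(\C))$, with no further computation needed.
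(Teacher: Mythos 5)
Your proposal is correct and follows essentially the same route as the paper: the heart of both arguments is the commutativity of $PA_u^*(N)$ obtained by applying \eqref{eq:fine} and its adjoint form in succession (you apply them in the opposite order, which is immaterial), after which the identification with $C(PU(N))$ follows from sandwiching between $PA_u(N)$ and its abelianization. Your explicit verification that $\pi\circ\psi$ is the identity on generators is a slightly more careful rendering of the paper's closing remark that a commutative quantum subgroup of $PA_u(N)$ containing $C(PU(N))$ must equal $C(PU(N))$, but it is not a different method.
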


\begin{proof}
We recall (Rem.~\ref{remarkprojectiveversion}) that for a CQG  $Q$  generated by a biunitary $u=((u_{ij}))$, the projective version
 is the $C^*$-subalgebra generated by products $u_{ij}^*u_{kl}$.

Clearly $ C ( U(N) )$ is a quantum subgroup of $A_u^*(N)$, and the latter is a quantum subgroup of $A_u(N)$.
Thus,
$C ( PU(N) )$ is a quantum subgroup of $PA_u^*(N)$, which is a quantum subgroup of $PA_u(N)$.
Since the abelianization of $PA_u(N)$ is exactly $C ( PU(N) )$, any commutative (as a $ C^* $-algebra) quantum subgroup of
$PA_u(N)$ containing $C ( PU(N) )$ coincides with $C ( PU(N) )$. Thus, the proof will be over if we can show that
the $C^*$-algebra of $PA_u(N)$ is commutative, i.e.~$PA_u(N)$ is the space of continuous functions on a compact group.
This is a simple computation.
Using first \eqref{eq:fine} and then its adjoint we get:
\begin{align*}
(u_{ij}^*u_{kl})(u_{pq}^*u_{rs}) &=
u_{ij}^*(u_{kl}u_{pq}^*u_{rs}) =
u_{ij}^*(u_{rs}u_{pq}^*u_{kl}) \\ &=
(u_{ij}^*u_{rs}u_{pq}^*)u_{kl} =
(u_{pq}^*u_{rs}u_{ij}^*)u_{kl} \\ &=
(u_{pq}^*u_{rs})(u_{ij}^*u_{kl}) \;.
\end{align*}
This proves that the generators of $PA_u(N)$ commute, which concludes the proof.
\end{proof}

\noindent
In complete analogy with \eqref{eq:qsubG}, if we call
$Q_n^*(n')$ the amalgamated free product of $ n $ copies of $ A_u^* (n') $ over the common \mbox{Woronowicz} \mbox{$ C^* $-subalgebra} $C(PU(n'))$, then we have:

\begin{cor}\label{thm:mainreal}
$\qisotr ( D_F )$ is a quantum subgroup of the free product
$$
\underbrace{C(U(1))*C(U(1))*\ldots*C(U(1))}_{n+1}
\,*\,\,Q_n^*(3) *A_u(n)
$$
The Woronowicz $C^*$-ideal of this CQG defining $\qisotr ( D_F )$ is determined by \eqref{eq:qisotA} and \eqref{eq:qisotC}.
\end{cor}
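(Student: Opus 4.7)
The plan is to combine three ingredients already established. By Proposition~\ref{prop:main} and Lemma~\ref{prop:mainA}, $\qisot(D_F)$ is the quantum subgroup of the free product
$$
C(U(1))^{*(n+1)} * Q_n(3) * A_u(n)
$$
cut out by the Woronowicz $C^*$-ideal generated by the relations \eqref{eq:qisotA} and \eqref{eq:qisotC}. By Corollary~\ref{thm:univ}, $\qisotr(D_F)$ is in turn the quotient of $\qisot(D_F)$ by the Woronowicz $C^*$-ideal generated by \eqref{eq:realrel}. Since the relations \eqref{eq:realrel} involve only the generators $(T_m)_{jk}$, imposing them affects only the middle factor $Q_n(3)$. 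The entire task therefore reduces to showing that this quotient is $Q_n^*(3)$.

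To carry this out, I would observe that \eqref{eq:realrel}, applied to a single biunitary $T_m$, is literally the half-liberation relation \eqref{eq:fine} defining $A_u^*(3)$. Thus, in the amalgamated free product presentation
$$
Q_n(3) \;=\; A_u(3)\, *_{PA_u(3)} \cdots *_{PA_u(3)}\, A_u(3) \quad (n\text{ copies}),
$$
imposing \eqref{eq:realrel} for each $m=1,\ldots,n$ replaces every factor $A_u(3)$ by its quotient $A_u^*(3)$. What requires checking is that the amalgamation subalgebra $PA_u(3)$ descends to its image $PA_u^*(3)$, which by Proposition~\ref{prop:puk} is $C(PU(3))$; this is immediate since the amalgamation identities \eqref{eq:qisotB} are phrased in terms of the products $(T_m^*)_{ij}(T_m)_{kl}$, i.e.~the defining generators of the projective subalgebra on either side of the quotient. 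By Definition~\ref{def:amalg} applied to $A_u^*(3)$ in place of $A_u(3)$, the resulting amalgamated free product is exactly $Q_n^*(3)$.

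Putting these observations together, $\qisotr(D_F)$ is the quantum subgroup of
$$
C(U(1))^{*(n+1)} * Q_n^*(3) * A_u(n)
$$
determined by the Woronowicz $C^*$-ideal generated by \eqref{eq:qisotA} and \eqref{eq:qisotC}, which is the asserted statement. I do not expect a genuine obstacle here: the only point that genuinely demands care is the formal compatibility of the half-liberation quotient with the amalgamated free product structure (namely, that imposing relations in each factor of an amalgamated free product yields the amalgamated free product of the quotients, with amalgamation along the induced image subalgebra). This follows from the universal properties of these constructions once one notes that the amalgamation subalgebras are stable under the half-liberation quotient, via Proposition~\ref{prop:puk}.
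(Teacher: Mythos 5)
Your proof is correct and follows essentially the same route as the paper, which in fact states this corollary without proof, asserting it ``in complete analogy'' with Lemma~\ref{prop:mainA}; you have simply made explicit the one point the paper leaves implicit, namely that imposing the half-liberation relations \eqref{eq:realrel} factor-by-factor turns the amalgamated free product $Q_n(3)$ into $Q_n^*(3)$, with the amalgamation subalgebra descending to $PA_u^*(3)\simeq C(PU(3))$.
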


\noindent
As in the complex case, let us denote by $\qisor( D_F )$ the $ C^* $-subalgebra of $\qisotr ( D_F )$ generated by 
$\inner{\xi\otimes 1, \mathrm{Ad_{U_{\R}}} ( a )(\eta\otimes 1)}$, where $a\in B_F$,
$\xi,\eta\in H_F$ and
$ U_{\R} $ is the corepresentation of $\qisotr ( D_F ).$ An immediate corollary of Prop.~\ref{prop:puk}
and Corollary \ref{thm:mainreal} is the following.

\begin{cor}
$\qisor ( D_F ) =C(U(1))*C(PU(3))$.
\end{cor}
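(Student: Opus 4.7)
The plan is to adapt the proof of the complex-case identity $\qiso(D_F) = C(U(1)) * A_{\mathrm{aut}}(M_3(\C))$, replacing $A_u(3)$ everywhere by its half-liberated analogue $A_u^*(3)$ and Prop.~\ref{thm:Ban} by Prop.~\ref{prop:puk}, so as to build two mutually inverse surjective CQG morphisms between $\qisor(D_F)$ and $C(U(1)) * C(PU(3))$.

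First, I will read off a generating set. Since the adjoint coaction of $\qisotr(D_F)$ on $B_F$ is again given by \eqref{eq:Ucoact}, the $C^*$-algebra $\qisor(D_F)$ is, by definition, generated by $x_0$ together with the projective products $(T_1^*)_{i,k}(T_1)_{l,j}$, $i,j,k,l \in \{1,2,3\}$. Inside $\qisotr(D_F)$ the biunitary $T_1$ additionally satisfies the half-liberation relation \eqref{eq:realrel} (Thm.~\ref{thm:real}), so the $C^*$-algebra generated by its entries is a quotient of $A_u^*(3)$; consequently the projective combinations generate a quotient of $PA_u^*(3) \simeq C(PU(3))$ by Prop.~\ref{prop:puk}. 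Since $x_0$ is unitary and generates a quotient of $C(U(1))$, the universal property of the free product yields a surjective CQG morphism
$$
\Phi : C(U(1)) * C(PU(3)) \twoheadrightarrow \qisor(D_F).
$$

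For the reverse arrow I will produce a sub-object of $\qisotr(D_F)$ in the category $\catB$ whose underlying CQG is $C(U(1)) * A_u^*(3)$, copying the analogous construction in $\catA$ used in the complex case. Concretely, writing $z$ for the generator of $C(U(1))$ and $u = ((u_{jk}))$ for the biunitary generator of $A_u^*(3)$, I will define the morphism on generators of $\qisotr(D_F)$ by $x_0 \mapsto z$, $x_i \mapsto 1$ for $i \geq 1$, $V \mapsto z\,\mathbb{I}_n$ and $T_m \mapsto u$ for all $m = 1,\ldots,n$. The relations \eqref{eq:qisotA}, \eqref{eq:qisotB}, \eqref{eq:qisotC} are checked exactly as in the complex case, while \eqref{eq:realrel} holds automatically, being the defining relation of $A_u^*(3)$. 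By the universality in Cor.~\ref{thm:univ} this gives a CQG morphism $\Psi : \qisotr(D_F) \to C(U(1)) * A_u^*(3)$, and restricting to the subalgebra generated by $x_0$ and $(T_1^*)_{ik}(T_1)_{lj}$ (which map respectively to $z$ and $u_{ik}^* u_{lj}$) yields a surjection
$$
\Psi' : \qisor(D_F) \twoheadrightarrow C(U(1)) * PA_u^*(3) = C(U(1)) * C(PU(3)).
$$

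Composing, $\Phi$ and $\Psi'$ act as the identity on the distinguished generators of each free product, so they are mutually inverse and yield the desired isomorphism. The main obstacle is Step 2: one must verify that $\Psi$ respects the defining relations of $\qisotr(D_F)$ from Lemma~\ref{prop:mainA} and Cor.~\ref{thm:mainreal}. This parallels the complex-case bookkeeping and is completed by the observation that \eqref{eq:realrel} is built into $A_u^*(3)$ by its very definition.
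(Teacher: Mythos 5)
Your overall strategy is the intended one: the paper's own proof of this corollary consists of the single remark that it is ``immediate'' from Prop.~\ref{prop:puk} and Cor.~\ref{thm:mainreal}, and what you spell out --- $\qisor(D_F)$ is generated by $x_0$ and the products $(T_1^*)_{ik}(T_1)_{lj}$, hence is a quotient of $C(U(1))*PA_u^*(3)\simeq C(U(1))*C(PU(3))$, and conversely one squeezes from below by exhibiting $C(U(1))*A_u^*(3)$ as a sub-object of $\qisotr(D_F)$ in $\catB$ --- is exactly the two-sided argument behind that remark, transposed from the complex case. The surjection $\Phi$ is fine.

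There is, however, a genuine problem with the reverse morphism $\Psi$ as you define it. The assignment $x_0\mapsto z$, $x_i\mapsto 1$, $V\mapsto z\,1_n$ sends $\mathrm{diag}(x_0x_1,\ldots,x_0x_n)$ to $z\,1_n$ but sends $\bar V=((V_{ij}^*))$ to $z^*1_n$, so the relation $\mathrm{diag}(x_0x_1,\ldots,x_0x_n)\Upsilon_\nu=\bar V\Upsilon_\nu$ from \eqref{eq:qisotA} is mapped to $z\,\Upsilon_\nu=z^*\Upsilon_\nu$, which fails in $C(U(1))*A_u^*(3)$ whenever $\Upsilon_\nu\neq 0$; likewise $V\Upsilon_R=\Upsilon_R\bar V$ becomes $z\,\Upsilon_R=z^*\Upsilon_R$. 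So these relations are \emph{not} ``checked exactly as in the complex case'': they genuinely constrain the images in the neutrino sector (the same defect is in fact already present in the map $V\mapsto x_0 1_n$ used in the paper's proof of the complex-case proposition, so you have inherited it rather than introduced it). The repair is to make the neutrino sector coinvariant, following the pattern of Remark~\ref{rem:gauge}: take $x_0\mapsto z$, $x_i\mapsto z^*$ for $i\geq 1$, $V\mapsto 1_n$ and $T_m\mapsto u$. Then $\mathrm{diag}(x_0x_i)\mapsto 1_n=\bar V$, both parts of \eqref{eq:qisotA} hold trivially, \eqref{eq:qisotC} reduces to $(CC^*)_{rs}=\delta_{rs}$, and \eqref{eq:qisotB} and \eqref{eq:realrel} hold by the definition of $A_u^*(3)$, so Cor.~\ref{thm:univ} applies. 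Since this corrected $\Psi$ still sends $x_0\mapsto z$ and $(T_1^*)_{ik}(T_1)_{lj}\mapsto u_{ik}^*u_{lj}$, the remainder of your argument --- restriction to the generated subalgebras and the verification that $\Phi$ and $\Psi'$ are mutually inverse on generators --- goes through unchanged.
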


\begin{rem}
Since $\qisotr ( D_F )$ is a quantum subgroup of $\qisot ( D_F )$, its coaction still preserves the
spectral action.
\end{rem}

A detailed study of quantum automorphisms for finite-dimensional real $C^*$-algebras,
along the lines of the discussion in this section, will be reported elsewhere.

%%% ======================================================================

\section{Proof of Proposition {\protect\ref{prop:main}}}\label{sec:7}
In this section, we prove the main result, that is, Proposition \ref{prop:main}. Throughout this section, $ ( Q, U )  $ will denote an object in $\catA.$ We start by exploiting the conditions regarding $ \gamma_F $ and $ J_F $, then we  use the conditions regarding $ D_F $ and $ \mathrm{Ad_U} $   to get a neater expression for $ U $ in Lemma 6.2, 6.3 and 6.4 and then using these simplified expressions in the next Lemmas, we derive the desired form of $ U $ from which we can identify the quantum isometry group. We will use Remark \ref{Upsilon_positive} in this section without mentioning it.
Recall that $\B(H_F)=M_2(\C)\otimes M_4(\C)\otimes M_4(\C)\otimes M_n(\C)$, where $n$ is the number of generations.

\begin{lemma}\label{lemma:appendix1}
$U\in \B(H_F)\otimes Q$ satisfies
$(\gamma_F\otimes 1)U=U(\gamma_F\otimes 1)$
and $(J_0\otimes 1)\bar U=U(J_0\otimes 1)$ if{}f
\begin{align}
U &=\sum\nolimits_{IJ}
(e_{i_1j_1}\otimes e_{i_2j_2}\otimes e_{i_3j_3}\otimes e_{i_4j_4})\otimes u_{IJ}
\notag\\[2pt] & \qquad
+\sum\nolimits_{IJ}
(e_{i_1j_1}\otimes e_{i_2j_2}\otimes e_{i_3+2,j_3+2}\otimes e_{i_4j_4})\otimes \bar{u}_{IJ}
\;,\label{eq:Uprime}
\end{align}
where the multi-indices $I=(i_1,\ldots,i_4)$, $J=(j_1,\ldots,j_4)$, etc.~run
in $\{1,2\}\times\{1,2,3,4\}\times\{1,2\}\times\{1,2,\ldots,n\}$.
\end{lemma}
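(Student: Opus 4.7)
The strategy is to expand $U$ in the canonical tensor basis of $\B(H_F)\otimes Q$ and impose the two commutation relations separately. The crucial observation that simplifies everything is that both $\gamma_F$ and $J_0$ act as the identity on the first, second, and fourth tensor factors of $H_F$, so each constraint reduces to a condition on the third-factor index pair $(i_3,j_3)$ only, with $i_1,j_1,i_2,j_2,i_4,j_4$ playing the role of passive labels.

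First, I would write $U=\sum_{IJ}(e_{i_1j_1}\otimes e_{i_2j_2}\otimes e_{i_3j_3}\otimes e_{i_4j_4})\otimes u_{IJ}$ and impose $(\gamma_F\otimes 1)U=U(\gamma_F\otimes 1)$. Since $\gamma_F$ acts on the third factor as $\mathrm{diag}(+1,+1,-1,-1)$, matching coefficients yields $(\epsilon_{i_3}-\epsilon_{j_3})u_{IJ}=0$ with $\epsilon_1=\epsilon_2=+1$ and $\epsilon_3=\epsilon_4=-1$. Thus $u_{IJ}=0$ unless $(i_3,j_3)$ lies in $\{1,2\}^2\cup\{3,4\}^2$, splitting $U=U_++U_-$ into the two corresponding block sums.

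Next, I would impose $(J_0\otimes 1)\bar U=U(J_0\otimes 1)$, where $\bar U$ denotes the operator with entries $u_{IJ}^*$ (as follows from unpacking the definition, since $e_{IJ}$ has real entries and hence commutes with componentwise complex conjugation). The third-factor part of $J_0$ is the involution $P=e_{13}+e_{24}+e_{31}+e_{42}$; it implements the involution $\tau:k\mapsto k\pm 2$ on $\{1,2,3,4\}$, and satisfies $Pe_{ij}=e_{\tau(i),j}$ and $e_{ij}P=e_{i,\tau(j)}$. Comparing coefficients of each $e_{i_1j_1}\otimes e_{i_2j_2}\otimes e_{i_3j_3}\otimes e_{i_4j_4}$ on the two sides of the commutation relation then yields $u_{IJ}^*=u_{I^\tau,J^\tau}$, where $I^\tau$ differs from $I$ only by the replacement $i_3\mapsto\tau(i_3)$, and similarly for $J^\tau$.

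Combining the two constraints, the relation $u_{IJ}^*=u_{I^\tau,J^\tau}$ sends the $U_+$ block ($i_3,j_3\in\{1,2\}$) bijectively to the $U_-$ block ($i_3,j_3\in\{3,4\}$) and identifies the coefficients of the latter with the adjoints of the former. Reindexing the $U_-$ sum via $i_3\mapsto i_3+2$, $j_3\mapsto j_3+2$ with $i_3,j_3\in\{1,2\}$ (where $\tau$ is exactly the $+2$ shift) then recasts $U$ in the form \eqref{eq:Uprime}. The converse direction is a direct substitution running the same calculation in reverse, so I expect no genuine obstacle in this lemma beyond carefully tracking the involution $\tau$ in the third factor.
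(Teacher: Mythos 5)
Your proposal is correct and follows essentially the same route as the paper: first use the commutation with $\gamma_F$ to kill all coefficients except those with $(i_3,j_3)\in\{1,2\}^2\cup\{3,4\}^2$, then impose $(J_0\otimes 1)\bar U=U(J_0\otimes 1)$ to identify the coefficients of the $\{3,4\}$-block with the adjoints of those of the $\{1,2\}$-block via the shift $i_3\mapsto i_3+2$. Your write-up is merely more explicit about the involution $\tau$ and the coefficient matching than the paper's two-line argument.
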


\begin{proof}
The condition $(\gamma_F\otimes 1)U=U(\gamma_F\otimes 1)$ implies that $ u_{i_1,j_1, i_2, j_2, i_3, j_3, i_4, j_4} = 0 $ unless
$i_3, j_3$ are both 
greater or equal than $2$ or both less or equal than $3$.
Using the reduced form of $ U $ obtained from
this observation, we impose $(J_0\otimes 1)\bar U=U(J_0\otimes 1)$ and get 
$u_{i_1,j_1, i_2, j_2, i_3, j_3, i_4, j_4}=(u_{i_1,j_1, i_2, j_2, i_3-2, j_3-2, i_4, j_4})^*$
for all $i_3,j_3\geq 3$, which proves the Lemma.
\end{proof}

Let $ V_1, V_2, V_3, V_4 $ denote the subspaces $(e_{11}\otimes e_{11}\otimes 1\otimes 1)\HH$,
$(e_{22}\otimes e_{11}\otimes 1\otimes 1)\HH$,
$(e_{11}\otimes (1-e_{11})\otimes 1\otimes 1)\HH$,
and
$(e_{22}\otimes (1-e_{11})\otimes 1\otimes 1)\HH$ respectively.

\begin{lemma}\label{lemma:appendix2}
If $ U $ is of the form \eqref{eq:Uprime} and
 commutes with $ D_F, $ the subspaces $ V_i, ~ i = 1,2,3,4 $ are kept invariant by U and thus  \eqref{eq:Uprime} becomes
\begin{align}
U &=
\sum_{i=1,2}
e_{ii}\otimes e_{11}\otimes
\begin{mfour}
\alpha_{11}^i & \alpha_{12}^i & 0 & 0 \\
\alpha_{21}^i & \alpha_{22}^i & 0 & 0 \\
0 & 0 & \bar\alpha_{11}^i & \bar\alpha_{12}^i \\
0 & 0 & \bar\alpha_{21}^i & \bar\alpha_{22}^i
\end{mfour}
\notag\\[2pt]
& \qquad +\sum_{\substack{i=1,2\\ j,k=1,2,3}}e_{ii}\otimes
e_{j+1,k+1}\otimes
\begin{mfour}
\beta_{11}^{\,i,j,k} & \beta_{12}^{\,i,j,k} & 0 & 0 \\
\beta_{21}^{\,i,j,k} & \beta_{22}^{\,i,j,k} & 0 & 0 \\
0 & 0 & \bar\beta_{11}^{\,i,j,k} & \bar\beta_{12}^{\,i,j,k} \\
0 & 0 & \bar\beta_{21}^{\,i,j,k} & \bar\beta_{22}^{\,i,j,k}
\end{mfour}
\label{eq:Usecond}
\end{align}
where, as in \eqref{eq:Dirac} we identify
$M_4(\C)\otimes M_n(\C)\otimes Q$ with $M_{4n}(Q)$,
we call
$\alpha^i_{j_1k_1}$ is the $n\times n$ matrix
with entries $(\alpha^i_{j_1k_1})_{j_2k_2}:=
u_{JK}$ with $J=(i,1,j_1,j_2)$ and $K=(i,1,k_1,k_2)$
and we call
$\beta_{j_1k_1}^{\,i,j_0,k_0}$ the $n\times n$ matrix
with entries $(\beta_{j_1k_1}^{\,i,j_0,k_0})_{j_2k_2}:=
u_{JK}$ with
$J=(i,j_0+1,j_1,j_2)$ and $K=(i,k_0+1,k_1,k_2)$.
\end{lemma}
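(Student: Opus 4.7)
The plan is to exhibit the block-diagonal structure of $U$ with respect to the orthogonal decomposition $H_F = V_1 \oplus V_2 \oplus V_3 \oplus V_4$, and then to reorganise the expansion \eqref{eq:Uprime} to read off \eqref{eq:Usecond}. Inspection of \eqref{eq:Dirac} shows that each of its four summands is supported on a single $V_i$, so $D_F = \bigoplus_{i=1}^{4} D_F|_{V_i}$. Writing $U$ as a $4 \times 4$ block operator $(U_{ij})$ with $U_{ij}\colon V_j \otimes Q \to V_i \otimes Q$, the condition $[D_F \otimes 1, U] = 0$ is equivalent to the intertwining relations $(D_F|_{V_i}\otimes 1)\, U_{ij} = U_{ij}\,(D_F|_{V_j}\otimes 1)$ for all pairs $(i,j)$. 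The crux is to deduce $U_{ij} = 0$ whenever $i \neq j$, which I would obtain by showing that the four operators $D_F|_{V_i}$ have pairwise disjoint spectra, so that any $Q$-linear intertwiner between them must vanish.

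For $i \in \{2,3,4\}$, a direct block computation using positivity and diagonality of $\Upsilon_e, \Upsilon_u, \Upsilon_d$ yields $(D_F|_{V_i})^2 = \Upsilon_x^2 \otimes I_4$ (with $x = e, u, d$ according to $i = 2, 3, 4$), so $\mathrm{spec}(D_F|_{V_i}) = \pm \mathrm{spec}(\Upsilon_x)$ and these three spectra are pairwise disjoint by the hypothesis that distinct $\Upsilon$-matrices have disjoint eigenvalues. For the pairs $V_1$ versus $V_i$ with $i \in \{2,3,4\}$, I would compute $(D_F|_{V_1})^2$ in block form (it splits into two $2n \times 2n$ blocks whose entries combine $\Upsilon_\nu^2$, $\Upsilon_\nu \Upsilon_R^{(*)}$ and $\Upsilon_R \Upsilon_R^{(*)}$) and exploit the hypothesis that $\mathrm{spec}(\Upsilon_\nu)$ is disjoint from $\mathrm{spec}(\Upsilon_{e,u,d})$ to rule out coincident eigenvalues; as a cleaner alternative, once $U$ is shown to preserve $V_2, V_3, V_4$ individually, the unitarity of $U$ forces $U(V_1) \subseteq V_1$ automatically, sidestepping any direct spectral comparison for $V_1$.

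Given block diagonality, the expansion \eqref{eq:Uprime} collapses: $u_{IJ}$ vanishes unless $i_1 = j_1$ (which supplies the $e_{ii}$ on the first tensor slot) and unless the pair $(i_2, j_2)$ is either $(1,1)$ or lies in $\{2,3,4\}^2$ (which splits the second slot into the $e_{11}$-summand and the $3 \times 3$ block $e_{j+1,k+1}$). Relabelling the surviving matrix-unit coefficients as $\alpha^i_{j_1 k_1}$ for the $e_{11}$-summand on slot two and $\beta^{i,j_0,k_0}_{j_1 k_1}$ for the $3 \times 3$ block, and using the $J_0$-conjugation on rows and columns $3,4$ of the third slot inherited from \eqref{eq:Uprime}, one obtains exactly \eqref{eq:Usecond}. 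The main obstacle I anticipate is the spectral comparison for $V_1$: since $D_F|_{V_1}$ genuinely mixes $\Upsilon_\nu$ with the Majorana matrix $\Upsilon_R$, the hypotheses do not directly pin down its spectrum, and a careful eigenvalue computation (or the unitarity argument mentioned above) will be needed to close this gap.
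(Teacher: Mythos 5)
Your main line of argument is exactly the paper's: the $V_i$ are $D_F$-invariant, the restrictions $D_F|_{V_i}$ have pairwise disjoint spectra, hence every off-diagonal block $U_{ij}$ intertwines two scalar self-adjoint matrices with disjoint spectra and must vanish; the paper's proof is precisely this assertion, stated in two lines. Your computation $(D_F|_{V_i})^2=\Upsilon_x^2\otimes I_4$ for $i=2,3,4$, the appeal to the hypothesis that $\Upsilon_e,\Upsilon_u,\Upsilon_d$ have pairwise distinct eigenvalues, and the reassembly of \eqref{eq:Uprime} into \eqref{eq:Usecond} are all correct. You are also right to single out $V_1$: when $\Upsilon_R\neq 0$ the spectrum of $D_F|_{V_1}$ is not $\pm\mathrm{spec}(\Upsilon_\nu)$ but a seesaw-type spectrum mixing $\Upsilon_\nu$ and $\Upsilon_R$, so the stated hypotheses on the $\Upsilon$'s do not literally yield disjointness from $\pm\mathrm{spec}(\Upsilon_{e,u,d})$. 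The paper simply asserts that the four subspaces ``correspond to distinct sets of eigenvalues (masses of the generations of $\nu$, $e$, $u$ and $d$)'', i.e.\ it takes this disjointness as physical input; on that point your proof is no less complete than the published one.

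The genuine problem is that your ``cleaner alternative'' for $V_1$ does not close the gap. It is circular: to know that $U$ preserves $V_2$ you must show that \emph{every} block in the second column vanishes, including $U_{12}$, and $U_{12}=0$ comes from the intertwining relation only by comparing $\mathrm{spec}(D_F|_{V_1})$ with $\mathrm{spec}(D_F|_{V_2})$ --- exactly the comparison you are trying to avoid; the within-$\{2,3,4\}$ comparisons leave the whole first row and first column of blocks uncontrolled. Moreover, even granting the premise, unitarity alone does not force an operator with entries in a $C^*$-algebra to preserve the orthogonal complement of an invariant complemented submodule: $\bigl(\begin{smallmatrix}S^*&0\\ e_{00}&S\end{smallmatrix}\bigr)$, with $S$ the unilateral shift and $e_{00}$ the rank-one projection onto $\ker S^*$, is unitary and block-triangular but not block-diagonal. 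What would rescue such an argument is that $U$ is a unitary \emph{corepresentation}, for which invariant subspaces of $H_F$ do have invariant orthogonal complements --- but one still needs the $V_1$-versus-rest spectral comparison to get any invariance statement started. The honest fix is to state explicitly, as an assumption in the spirit of the paper's list of hypotheses on the $\Upsilon$'s, that the eigenvalues of $D_F|_{V_1}$ (the physical neutrino masses) are disjoint from $\pm\mathrm{spec}(\Upsilon_e)\cup\pm\mathrm{spec}(\Upsilon_u)\cup\pm\mathrm{spec}(\Upsilon_d)$, after which your first, direct argument applies verbatim to all six pairs $(i,j)$.
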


\begin{proof}
 The subspaces $ V_i, i = 1,2,3,4 $ are $D_F$-invariant and correspond to distinct
sets of eigenvalues (masses of the generations of $\nu$,
$e$, $u$ and $d$ respectively). Since $(D_F\otimes 1)U=U(D_F\otimes 1)$
these four subspaces must be preserved by $U$ and this completes the proof of the lemma.
\end{proof}

\begin{lemma}\label{lemma:split}
Let $Q$ be any CQG with $U$ as in \eqref{eq:Uprime} and satisfying \eqref{eq:cond6}.
Then each one of the four summands in
$B_F=\C\oplus\C\oplus M_2(\C)\oplus M_3(\C)$ is a coinvariant
subalgebra under the adjoint coaction $\mathrm{Ad_U}(a)=U ( a \otimes 1 ) U^*$ of $Q$.
\end{lemma}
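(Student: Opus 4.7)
The plan is to dispatch each summand $S_i$ of $B_F = \C \oplus \C \oplus M_2(\C) \oplus M_3(\C)$ in turn, combining the block structure of $U$ dictated by \eqref{eq:Uprime} with the algebraic shape of $B_F$ given by \eqref{eq:B}. First I would record two ingredients. From \eqref{eq:Uprime}, $U$ is block-diagonal on the third tensor factor of $H_F$, respecting the splitting $\C^4 = \C^{\{1,2\}} \oplus \C^{\{3,4\}}$, with bottom block equal to the entrywise $Q$-adjoint of the top block; writing $U^{\mathrm{top}} = \bigl(\begin{smallmatrix} A & B \\ C & D \end{smallmatrix}\bigr)$, with $A,B,C,D \in M_2 \otimes M_4 \otimes M_n \otimes Q$, the bottom block is $\bigl(\begin{smallmatrix} \bar A & \bar B \\ \bar C & \bar D \end{smallmatrix}\bigr)$. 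From \eqref{eq:B}, a general element of $B_F$ has the following $4\times 4$ shape on the third factor: the $e_{11}$-block is $q \otimes 1 \otimes 1$ with $q \in M_2(\C)$ on the first factor; the $e_{22}$- and $e_{33}$-blocks coincide and have the form $1 \otimes \mathrm{diag}(\lambda,m) \otimes 1$; the $e_{44}$-block is $\mathrm{diag}(\lambda,\lambda') \otimes 1 \otimes 1$ with the same $\lambda$; all off-diagonal third-factor blocks vanish.

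Next I would handle summand $S_3$ (the $M_2(\C)$-summand). For $a = q \otimes 1 \otimes e_{11} \otimes 1$, $\mathrm{Ad_U}(a)$ is supported on the top third-factor block, with $(2,2)$ entry equal to $C(q\otimes 1 \otimes 1)C^*$. The constraint $\mathrm{Ad_U}(a) \in B_F \otimes Q$ — in particular that the $(2,2)$ and $(3,3)$ blocks agree and the latter is zero — forces $C(q \otimes 1 \otimes 1)C^* = 0$; specializing to $q = I_2$ gives $CC^* = 0$, hence $C = 0$, and by the $J_0$-relation also $\bar C = 0$. Then $\mathrm{Ad_U}(a) = A(q\otimes 1 \otimes 1)A^* \otimes e_{11}$ lies in $S_3 \otimes Q$. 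A parallel argument on the bottom block (now upper-triangular) applied to $e_2$ kills the $(3,3)$ component and, via the $\lambda$-matching, also the unwanted part of the $(4,4)$ block, yielding $\mathrm{Ad_U}(e_2) \in \C e_2 \otimes Q$.

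For summands $S_1$ and $S_4$, which both have third-factor support on $\{e_{22}, e_{33}\}$, I would apply $\mathrm{Ad_U}$ to $e_1$ and to the matrix units $\inner{0,0,0,e_{ij}}$ generating $S_4$, computing their block expressions using $C=\bar C=0$. Imposing $B_F$-invariance then yields equations of three types: equality of $(2,2)$ and $(3,3)$ blocks, vanishing of off-diagonals coupling top and bottom blocks, and matching of $\lambda$ between the $e_{22}+e_{33}$ and $e_{44}$ blocks. I expect that combining these simultaneously for both summands eliminates every cross-summand contribution: the image of $S_4$ must have vanishing $e_{11}$- and $e_{44}$-components (placing it in $S_4 \otimes Q$), and the image of $e_1$ must have vanishing $e_{11}$-component while its $e_{22}+e_{33}$ and $e_{44}$ contributions reconstruct $e_1$ up to a $Q$-scalar (placing it in $\C e_1 \otimes Q$).

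The main obstacle will be this last joint step. A priori the top-block entry $B$ and its counterpart $\bar B$ could shuttle an $e_{22}$-supported element of $S_4$ into an $e_{11}$-supported piece looking like $S_3$, and similarly mix $S_1$ into $S_3$ or $S_2$ via the $e_{44}$ block; the vanishings derived from summand $S_3$ alone do not immediately rule this out. To get the decoupling, one must use simultaneously the fact that $S_4$ has no $e_{44}$-support while $S_1$ has no $e_{11}$-support, together with the $B_F$ $\lambda$-matching relation, so that the equations coming from the two summands reinforce each other and kill all the mixing terms.
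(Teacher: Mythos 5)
Your treatment of the $M_2(\C)$ summand and of the second copy of $\C$ is essentially correct: the derivation of $C=0$ from the forced equality of the $e_{22}$- and $e_{33}$-components is valid and is a clean block-matrix reformulation of what the paper does by evaluating on subspaces. The genuine gap is the step you yourself flag and leave as an expectation: the vanishing of the cross terms for $S_1$ and $S_4$ is not proved, and the mechanism you propose cannot prove it. Concretely, the dangerous term is the $e_{11}$-component (third factor) of $\mathrm{Ad_U}(\inner{0,0,0,e_{ij}})$, which in your notation is $B\,s_{ij}\,B^*$ with $s_{ij}=1\otimes \tilde e_{ij}\otimes 1$ and $\tilde e_{ij}$ the matrix unit in the lower $3\times3$ corner of the second factor; if it is nonzero, $M_3(\C)$ leaks into $M_2(\C)$. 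This term lives entirely in the $e_{11}$-block of the third factor, so it is invisible both to the $e_{22}$/$e_{33}$ matching and to the $\lambda$-matching (which only ties the $e_{22}$- and $e_{44}$-blocks together and hence only constrains the $\C\oplus\C\oplus M_3(\C)$ part of the image, never its $M_2(\C)$ part). What you actually have in hand at that point --- $C=0$, $B(e_{22}\otimes1\otimes1)=0$ with $e_{22}$ in the \emph{first} factor, and $BB^*=1-AA^*$ from unitarity of the triangular block matrix --- is compatible with $Bs_{ij}B^*\neq0$, since nothing in \eqref{eq:Uprime} and \eqref{eq:cond6} alone forces $AA^*=1$. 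The analogous leakages of $\inner{1,0,0,0}$ into $M_2(\C)$ (via $B(1\otimes e_{11}\otimes1)B^*$) and into the other summands are stuck for the same reason.

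The ingredient that closes the gap is the hypothesis you are not using: in the context where this lemma is applied, $U$ also commutes with $D_F\otimes1$, and Lemma \ref{lemma:appendix2} upgrades \eqref{eq:Uprime} to \eqref{eq:Usecond}, so that $U$ is in addition block-diagonal in the first factor and with respect to the splitting $e_{11}\C^4\oplus(1-e_{11})\C^4$ of the second factor. (The lemma's statement cites only \eqref{eq:Uprime}, but the paper's own proof visibly works with \eqref{eq:Usecond}.) Granting this, $B$ preserves the second-factor splitting, so $Bs_{ij}B^*$ is supported in $(1-e_{11})M_4(\C)(1-e_{11})$ of the second factor, whereas by \eqref{eq:B} the $e_{11}$-component of any element of $B_F\otimes Q$ has the form $\sum_i q_i\otimes1_4\otimes1_n\otimes c_i$; compressing with $1\otimes e_{11}\otimes1\otimes1$ then forces $Bs_{ij}B^*=0$, and the same device (evaluation on $(e_{ii}\otimes e_{11}\otimes e_{44}\otimes1)H_F$ and $(1\otimes e_{11}\otimes e_{11}\otimes1)H_F$) kills the remaining cross terms for $\inner{1,0,0,0}$. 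This is exactly how the paper proceeds. So your first two summands stand, but to finish the last two you must import the block structure of $U$ in the first two tensor factors coming from the $D_F$-commutation; the third-factor bookkeeping alone does not suffice.
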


\begin{proof}
We start with the basis element $\inner{0,1,0,0}$ of the second copy of $\C$.
Equation \eqref{eq:cond6} means that
\begin{align} \label{8888}
\mathrm{Ad_U}(\inner{0,1,0,0}) &=
\inner{1,0,0,0}\otimes a^{\inner{1,0,0,0}} +
\inner{0,1,0,0}\otimes a^{\inner{0,1,0,0}} \notag\\[2pt] &\qquad +
\sum_{i,j=1,2}\inner{0,0,e_{ij},0}\otimes a^{\inner{0,0,\smash[b]{e_{ij}},0}} +
\sum_{i,j=1,2,3}\inner{0,0,0,e_{ij}}\otimes a^{\inner{0,0,0,\smash[b]{e_{ij}}}} \;,
\end{align}
where $a^{\inner{.}}$ are some elements of $Q$.

By \eqref{eq:Usecond}, $U ( \inner{0,1,0,0} \otimes 1 )U^*$ has $e_{22}$ in the first position
and $e_{jk}$ in the third, with $j,k=3,4$.
Therefore, $U ( \inner{0,1,0,0} \otimes 1 ) U^*$ vanishes on the subspaces
$(e_{11}\otimes 1\otimes e_{44}\otimes 1)H_F$,
$(1\otimes 1\otimes e_{11}\otimes 1)H_F$
and
$(1\otimes (1-e_{11})\otimes (e_{22}+e_{33})\otimes 1)H_F$. Applying \eqref{8888} on these three subspaces and using \eqref{eq:B}
we get respectively:
\begin{align*}
0 &=(e_{11}\otimes 1\otimes e_{44}\otimes 1)\otimes a^{\inner{1,0,0,0}} +0+0+0
\;,\\
0&=0+0+\sum\nolimits_{i,j=1,2}\inner{0,0,e_{ij},0}\otimes a^{\inner{0,0,\smash[b]{e_{ij}},0}} +0
\;,\\
0&=0+0+0+\sum\nolimits_{i,j=1,2,3}\inner{0,0,0,e_{ij}}\otimes a^{\inner{0,0,0,\smash[b]{e_{ij}}}}
\;.
\end{align*}
Therefore $a^{\inner{1,0,0,0}}=
a^{\inner{0,0,\smash[b]{e_{ij}},0}}=a^{\inner{0,0,0,\smash[b]{e_{ij}}}}=0$
and
$\mathrm{Ad_U}(\inner{0,1,0,0})\subset\inner{0,1,0,0}\otimes Q$.
The proof for the other three factors is similar.

For the rest of the proof, let $ \lambda \in \C, ~ q \in M_2(  \C ), m \in M_3 ( \C ) $ be arbitrary.

$U ( \inner{1,0,0,0} \otimes 1 ) U^*$ vanishes on the subspaces
$(e_{22}\otimes (1-e_{11})\otimes e_{44}\otimes 1) H_F$,
$(1\otimes e_{11}\otimes e_{11}\otimes 1) H_F$ and
$(1\otimes (1-e_{11})\otimes e_{22} \otimes 1) H_F$ and hence this implies
respectively that the coefficients of $ \inner{0,\lambda,0,0},~ \inner{0,0,q,0},~ \inner{0,0,0,m} $ in $ \mathrm{Ad_U}(\inner{1,0,0,0})$ are zero.

$U (\inner{0,0,q,0} \otimes 1 ) U^*$ vanishes on the subspaces
$(e_{11}\otimes 1\otimes e_{44}\otimes 1) H_F$,
$(e_{22}\otimes 1\otimes e_{44}\otimes 1) H_F$ and
$(1\otimes (1-e_{11})\otimes e_{33}\otimes 1) H_F$ and hence this implies
respectively that the coefficients of $ \inner{\lambda,0,0,0},~ \inner{0,\lambda,0,0},~ \inner{0,0,0,m} $ in $\mathrm{Ad_U}(\inner{0,0,q,0})$ are zero.

Finally, $U(\inner{0,0,0,m} \otimes 1) U^*$ vanishes on the subspaces
$(e_{11}\otimes e_{11}\otimes e_{44}\otimes 1) H_F$,
$(e_{22}\otimes e_{11}\otimes e_{44}\otimes 1) H_F$ and
$(1\otimes e_{11}\otimes e_{11}\otimes 1) H_F$ which implies
respectively that the coefficients of $ \inner{\lambda,0,0,0},~ \inner{0,\lambda,0,0},~ \inner{0,0,q,0} $ in $\mathrm{Ad_U}(\inner{0,0,0,m})$ are zero.
\end{proof}

\begin{lemma}\label{lemma:vanish}
If \eqref{eq:cond6} is satisfied, the matrices $\alpha^i_{j_1k_1}$
and $\beta_{j_1k_1}^{\,i,j_0,k_0}$ in \eqref{eq:Usecond}
are zero for all $j_1\neq k_1$.
\end{lemma}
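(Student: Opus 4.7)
The plan is to exploit Lemma \ref{lemma:split} (coinvariance of each summand of $B_F$ under $\mathrm{Ad_U}$) together with positivity in $M_n(Q)$. Writing \eqref{eq:Usecond} as $U=U_\alpha+U_\beta$, with $U_\alpha$ supported on the second tensor factor position $e_{11}$ and $U_\beta$ on $e_{j+1,k+1}$ ($j,k\in\{1,2,3\}$), the orthogonalities $e_{11}\cdot e_{k+1,j+1}=0=e_{j+1,k+1}\cdot e_{11}$ kill all cross terms in $\mathrm{Ad_U}(X)$ for the test elements $X$ below, reducing each case to a single-block computation in the third tensor factor.

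To obtain $\alpha^i_{21}=0$, I would apply $\mathrm{Ad_U}$ to $\inner{0,0,q,0}=q\otimes 1\otimes e_{11}\otimes 1$ for $q\in M_2(\C)$. By Lemma \ref{lemma:split}, the result lies in $\inner{0,0,M_2(\C),0}\otimes Q$, whose third tensor factor is proportional to $e_{11}$ in the $4\times 4$ block structure. Only $U_\alpha$ contributes to the second-factor $e_{11}$ part, and a direct block computation shows its third factor has $(2,2)$-block $\alpha^i_{21}(\alpha^j_{21})^*$; requiring this to vanish for all $i,j$ and specializing to $i=j$ gives $\alpha^i_{21}(\alpha^i_{21})^*=0$ in $M_n(Q)$, hence $\alpha^i_{21}=0$ by positivity. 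To obtain $\alpha^i_{12}=0$, I would apply the same strategy to $\inner{1,0,0,0}$ and $\inner{0,1,0,0}$, whose $\mathrm{Ad_U}$-images lie in the respective one-dimensional $\C$ summands by Lemma \ref{lemma:split}. Expanding via \eqref{eq:B} and matching the first-factor $e_{11}$ and $e_{22}$ components isolates $(1,1)$-block identities of the form $\alpha^i_{12}(\alpha^i_{12})^*=0$, yielding $\alpha^i_{12}=0$ for $i=1,2$.

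The $\beta$ matrices are treated analogously by applying $\mathrm{Ad_U}$ to $\inner{0,0,0,m}$ for $m\in M_3(\C)$; this element has third factor $e_{22}+e_{33}$ and second factor supported on positions $\{2,3,4\}\times\{2,3,4\}$, so now only $U_\beta$ contributes. By Lemma \ref{lemma:split}, the third factor of the image is supported on block positions $\{2,3\}\times\{2,3\}$, and the $(1,1)$ and $(4,4)$ block entries of the resulting third-factor computation are $\beta^{i,j_0,k_0}_{12}(\beta^{i,j_0,k_0}_{12})^*$ and $\bar\beta^{i,j_0,k_0}_{21}(\bar\beta^{i,j_0,k_0}_{21})^*$ after specialization to diagonal indices. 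These must vanish, and positivity yields $\beta^{i,j_0,k_0}_{12}=\beta^{i,j_0,k_0}_{21}=0$.

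The main obstacle is the careful bookkeeping across four tensor factors, particularly tracking how the $J_0$-induced block-diagonal structure (the $\alpha$ versus $\bar\alpha$ halves of \eqref{eq:Usecond}) interacts with the $(\,)^*$ operation and with the third-factor support of each test element. Once the cross-term orthogonalities are organized, all conclusions reduce to the elementary fact $xx^*=0\Rightarrow x=0$ in $M_n(Q)$.
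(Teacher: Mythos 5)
Your proposal is correct and follows essentially the same route as the paper: invoke Lemma \ref{lemma:split}, apply $\mathrm{Ad_U}$ to the test elements $\inner{1,0,0,0}$, $\inner{0,0,q,0}$ and $\inner{0,0,0,m}$, read off the third-factor block components that coinvariance forces to vanish (the paper uses the projections onto $e_{11}$, $e_{22}$ and $e_{11},e_{44}$ respectively), and conclude from identities of the form $xx^*=0$. The only cosmetic difference is that the paper works directly with the basis elements $e_{ii}$ and $e_{k_0l_0}$ rather than general $q\in M_2(\C)$, $m\in M_3(\C)$, which changes nothing.
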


\begin{proof}
We use Lemma \ref{lemma:split}.
Since $\mathrm{Ad_U}(\inner{1,0,0,0})\subset \inner{1,0,0,0}\otimes Q$, it is easy to see that  $ ( e_{ii}\otimes e_{11}\otimes e_{11}\otimes 1\otimes 1_Q ) \mathrm{Ad_U}(\inner{1,0,0,0})$ equals zero for all $i=1,2$.
On the other hand,  straightforward computation gives
$$
(e_{ii}\otimes e_{11}\otimes e_{11}\otimes 1\otimes 1_Q)
\mathrm{Ad_U}(\inner{1,0,0,0})=
e_{ii}\otimes e_{11}\otimes e_{11}\otimes \alpha^i_{12}(\alpha^i_{12})^*=0 \;,
$$
from which it follows that $\alpha^i_{12}=0$ for all $i=1,2$.
Similarly,
$$
(1\otimes e_{11}\otimes e_{22}\otimes 1\otimes 1_Q)
\mathrm{Ad_U}(\inner{0,0,e_{ii},0})=
e_{ii}\otimes e_{11}\otimes e_{22}\otimes \alpha^i_{21}(\alpha^i_{21})^*=0
$$
gives $\alpha^i_{21}=0$ for all $i=1,2$. Finally,
 $\mathrm{Ad_U}(\inner{0,0,0,e_{k_0l_0}})$ applied to the projections
$1\otimes 1\otimes e_{11}\otimes 1$ and
$1\otimes 1\otimes e_{44}\otimes 1,$ we get the conditions
$$
\beta_{12}^{i,j_0,k_0}(\beta_{12}^{i,l_0,n_0})^*=
\bar\beta_{21}^{i,j_0,k_0}(\beta_{21}^{i,l_0,n_0})^t=0
$$
for all $i,j_0,k_0,l_0,n_0$.
In particular setting $j_0=l_0$ and $k_0=n_0$ we get
$\beta_{12}^{i,j_0,k_0}=\beta_{21}^{i,j_0,k_0}=0$.
\end{proof}

\smallskip

Now we impose $U(D_F\otimes 1)=(D_F\otimes 1)U$, with
$D_F$ as in \eqref{eq:Dirac}, $ U $ as in \eqref{eq:Usecond}
and using Lemma \ref{lemma:vanish}.

\begin{lemma}\label{lemma:5}
Any $U\!$ of the form \eqref{eq:Usecond}, and with $\alpha^i_{j_1k_1}=\beta_{j_1k_1}^{\,i,j_0,k_0}=0$
for all $j_1\neq k_1$,
satisfies
$U(D_F\otimes 1)=(D_F\otimes 1)U\!$ if and only if
\begin{enumerate}
\item all $\alpha^2_{ss}$ and $\beta^{\,1,j,k}_{rr}$
are diagonal $n\times n$
matrices,
\item $\alpha^2_{22}=\bar\alpha^2_{11}, ~ \beta^{\,1,j,k}_{22}=\bar\beta^{\,1,j,k}_{11}, ~ \beta^{2,j,r}_{22} = \bar{\beta^{2,j,r}_{11}}$,
\item $\alpha^1_{11}\Upsilon_\nu =\Upsilon_\nu \alpha^1_{11}=\bar\alpha^1_{22}\Upsilon_\nu=\Upsilon_\nu\,\bar\alpha^1_{22}$,
$\alpha^1_{22}\Upsilon_R =\Upsilon_R\,\bar\alpha^1_{22}$
\item $ {C}^*   \beta^{2,j,k}_{11}  C   $  is a diagonal matrix.
\end{enumerate}
\end{lemma}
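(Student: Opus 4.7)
The plan is to decompose the commutation $U(D_F\otimes 1)=(D_F\otimes 1)U$ sector by sector. Writing $p_\nu:=e_{11}\otimes e_{11}\otimes 1\otimes 1$, $p_e:=e_{22}\otimes e_{11}\otimes 1\otimes 1$, $p_u:=e_{11}\otimes (1-e_{11})\otimes 1\otimes 1$, $p_d:=e_{22}\otimes (1-e_{11})\otimes 1\otimes 1$, the operator $D_F$ is already block-diagonal in this decomposition by \eqref{eq:Dirac}, and by Lemma~\ref{lemma:appendix2} so is $U$. In each sector the chirality-level restriction of $U$ is block-diagonal of the form $\mathrm{diag}(\mu_{11},\mu_{22},\bar\mu_{11},\bar\mu_{22})$ by Lemma~\ref{lemma:vanish}, where $\mu$ stands for $\alpha^i$ in the two lepton sectors and for a color-indexed block $\beta^{i,j,k}$ in the two quark sectors. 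The lemma then reduces to four independent commutation problems.

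In each sector the restriction of $D_F$ is an anti-block matrix
\[
D_x=\begin{pmatrix} 0 & 0 & 0 & \Upsilon_x \\ 0 & 0 & \Upsilon_x^t & \ast \\ 0 & \bar\Upsilon_x & 0 & 0 \\ \Upsilon_x^* & \ast & 0 & 0 \end{pmatrix},
\]
with $\ast=\Upsilon_R$ in the $\nu$ sector and $\ast=0$ in the other three. Equating the $(1,4)$-block of $UD_x$ with that of $D_xU$ gives $\mu_{11}\Upsilon_x=\Upsilon_x\bar\mu_{22}$, while the $(2,3)$-block (after using Remark~\ref{Upsilon_positive} to rewrite $\Upsilon_x^t=\bar\Upsilon_x$) gives $\mu_{22}\bar\Upsilon_x=\bar\Upsilon_x\bar\mu_{11}$, whose complex conjugate reads $\bar\mu_{22}\Upsilon_x=\Upsilon_x\mu_{11}$. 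The remaining off-diagonal blocks just reproduce these identities by adjunction. In the $\nu$ sector the $(2,4)$-block gives the extra relation $\mu_{22}\Upsilon_R=\Upsilon_R\bar\mu_{22}$. Chaining the two identities displayed above yields $\mu_{11}\Upsilon_x^2=\Upsilon_x^2\mu_{11}$, and by positivity of $\Upsilon_x$ together with the continuous functional calculus $\Upsilon_x=\sqrt{\Upsilon_x^2}$, one deduces that $\mu_{11}$ commutes with $\Upsilon_x$; the same argument applies to $\bar\mu_{22}$.

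The sector-specific hypotheses now finish the work. In the $e$ sector $\Upsilon_e$ is diagonal with distinct nonzero eigenvalues, so $\Upsilon_e^2$ has the same property, and the commutation forces $\alpha^2_{11}$ (hence also $\alpha^2_{22}$) to be diagonal; invertibility of $\Upsilon_e$ then promotes $\alpha^2_{11}\Upsilon_e=\Upsilon_e\bar\alpha^2_{22}$ to the equality $\alpha^2_{22}=\bar\alpha^2_{11}$, giving the $\alpha^2$-parts of (1) and (2). The $u$ sector is handled uniformly in the color indices $(j,k)$ by the same reasoning applied to $\Upsilon_u$, yielding the $\beta^{1,j,k}$-parts of (1) and (2). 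For the $d$ sector we substitute $\Upsilon_d=C\delta_\downarrow C^*$: the commutation of $\beta^{2,j,k}_{11}$ with $\Upsilon_d^2=C\delta_\downarrow^2 C^*$ translates into the commutation of $C^*\beta^{2,j,k}_{11}C$ with $\delta_\downarrow^2$, a diagonal matrix with distinct nonzero eigenvalues, so that $C^*\beta^{2,j,k}_{11}C$ is diagonal, which is exactly (4); invertibility of $\Upsilon_d$ once again gives $\beta^{2,j,k}_{22}=\bar\beta^{2,j,k}_{11}$.

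The $\nu$ sector is the delicate one, because $\Upsilon_\nu$ is not assumed invertible nor to have simple spectrum. The functional-calculus step still produces the commutation of both $\alpha^1_{11}$ and $\bar\alpha^1_{22}$ with $\Upsilon_\nu$, and combining this with $\alpha^1_{11}\Upsilon_\nu=\Upsilon_\nu\bar\alpha^1_{22}$ yields precisely the chain of equalities stated in (3); the Majorana-mass identity $\alpha^1_{22}\Upsilon_R=\Upsilon_R\bar\alpha^1_{22}$ is read directly off the $(2,4)$-block. The converse (``if'') direction is a plain substitution: assuming (1)--(4) and working sector by sector, every off-diagonal block identity $U_{ii}(D_x)_{ij}=(D_x)_{ij}U_{jj}$ follows at once from the listed relations and their complex conjugates. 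The main subtle point is the functional-calculus step in the $\nu$ sector, where the lack of invertibility of $\Upsilon_\nu$ prevents the stronger cancellation $\alpha^1_{11}=\bar\alpha^1_{22}$ and forces the weaker form of commutation recorded in (3).
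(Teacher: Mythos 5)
Your proposal is correct and follows essentially the same route as the paper: both reduce the commutation with $D_F$ to the sector‑by‑sector block identities (the paper's (7.3a)--(7.3c)), chain the $(1,4)$ and $(2,3)$ relations to get commutation with $\Upsilon_x^2$ and hence with its positive square root, and then invoke the spectral hypotheses (simple diagonal spectrum for $\Upsilon_e,\Upsilon_u$, the decomposition $\Upsilon_d=C\delta_\downarrow C^*$ for the down sector, and no invertibility for $\Upsilon_\nu$) to obtain conditions 1--4. Your explicit organization by the four projections $p_\nu,p_e,p_u,p_d$ is only a presentational difference from the paper's list of equations.
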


\begin{proof}
The condition $U(D_F\otimes 1)=(D_F\otimes 1)U$ is equivalent to
the following sets of equations:
\begin{subequations}
\begin{align}
\alpha^1_{11}\Upsilon_\nu &=\Upsilon_\nu\,\bar\alpha^1_{22} \;,&
\bar\alpha^1_{22}\Upsilon_\nu &=\Upsilon_\nu\,\alpha^1_{11} \;,&
\alpha^1_{22}\Upsilon_R &=\Upsilon_R\,\bar\alpha^1_{22} \label{eq:relAlpha}
\;,\\[2pt]
\alpha^2_{11}\Upsilon_e &=\Upsilon_e\hspace{1pt}\bar\alpha^2_{22} \;,&
\alpha^2_{22}\Upsilon_e &=\Upsilon_e\hspace{1pt}\bar\alpha^2_{11} \;,&
\beta^{1,j,k}_{11}\Upsilon_u &=\Upsilon_u\hspace{1pt}\bar\beta^{1,j,k}_{22} \label{eq:condsolved}
\;,\\[2pt]
\bar\beta^{1,j,k}_{22}\Upsilon_u &=\Upsilon_u\,\beta^{1,j,k}_{11} \;,&
\beta^{2,j,k}_{11}\Upsilon_d &=\Upsilon_d\hspace{1pt}\bar\beta^{2,j,k}_{22} \;,&
\bar\beta^{2,j,k}_{22}\Upsilon_d &=\Upsilon_d\hspace{1pt}\beta^{2,j,k}_{11} \label{eq:condsolvedfivefive}
 \;,
\end{align}
\end{subequations}
Actually, there are additional $9$ relations that --- recalling that $\Upsilon_x$ ($x=e,u,d,\nu$)
are positive, $ \Upsilon_e, \Upsilon_u $ are diagonal and $\Upsilon_R$ is symmetric --- turn out to be the
``bar'' of previous ones and hence they do not give any new information.

From the first two equations in \eqref{eq:relAlpha}, we deduce
that $\alpha^1_{11}$ commute with $\Upsilon_\nu^2$:
$$
\alpha^1_{11}\Upsilon_\nu^2=\Upsilon_\nu\bar\alpha^1_{22}\Upsilon_\nu
=\Upsilon_\nu^2\alpha^1_{11} \;,
$$
and hence it commutes with its positive square root $\Upsilon_\nu$. Similarly
$\alpha^1_{11}$ commutes with $\Upsilon_\nu$ and the conditions
\eqref{eq:relAlpha} turn out to be equivalent to point 3. of the Lemma.

In a similar way 
from  \eqref{eq:condsolved} and \eqref{eq:condsolvedfivefive}
we deduce that all $\alpha^2_{ss}$ commute with
$\Upsilon_e^2$ and all $\beta^{\,1,j,k}_{rr}$ commute with $\Upsilon_u^2$.
Since $\Upsilon_x^2$ ($x=e,u$) are diagonal with distinct
eigenvalues, we deduce that all $\alpha^2_{ss}$ and
$\beta^{\,1,j,k}_{rr}$ must be diagonal $n\times n$
matrices. This proves 1.

As all $\alpha^2_{ss}$ and
$\beta^{\,1,j,k}_{rr}$ are diagonal, \eqref{eq:condsolved} implies that $\alpha^2_{22}=\bar\alpha^2_{11}$ and
$\beta^{\,1,j,k}_{22}=\bar\beta^{\,1,j,k}_{11},$ where we have used that $\Upsilon_e$ and $ \Upsilon_u $  are diagonal invertible matrices. Thus the first two equations of 2. are proved.

The second and third equation of \eqref{eq:condsolvedfivefive} implies respectively
\begin{equation}
  \beta^{2,j,k}_{22}  =  {(\Upsilon^t_d)}^{- 1}  \bar{\beta^{2,j,k}_{11}} \Upsilon^t_d,~  \beta^{2,j,k}_{22}  =  \Upsilon^t_d  \bar{\beta^{2,j,k}_{11}}  ( \Upsilon^t_d )^{ - 1 } .\label{jyotishmannew}
 \end{equation}

These two equations taken together means
   $$ {(\Upsilon^t_d)}^{- 1}  \bar{\beta^{2,j,k}_{11}} \Upsilon^t_d    =    \Upsilon^t_d   \bar{\beta^{2,j,k}_{11}}  ( \Upsilon^t_d )^{ - 1 }  .$$

Thus,
$ \beta^{2,j,k}_{11}   =  \Upsilon_d   {\Upsilon_d}^*  \beta^{2,j,k}_{11}  (  \Upsilon_d   {\Upsilon_d}^* )^{ - 1 }.  $
But,  $  \Upsilon_d   {\Upsilon_d}^* = C \delta_{\downarrow} {C}^* C {\delta_{\downarrow}}^* C^{\prime *} =  C \delta_{\downarrow}  {\delta_{\downarrow}}^* C^{\prime *}.  $
Thus, $ {C}^*   \beta^{2,j,k}_{11}  C   $ commutes with  $ \delta_{\downarrow}  {\delta_{\downarrow}}^*.$
As the latter is a diagonal matrix with distinct eigenvalues, $ {C}^*   \beta^{2,j,k}_{11}  C   $  is a diagonal matrix.
 The fact that  $ {C}^*   \beta^{2,j,k}_{11}  C   $  is diagonal implies that $ {(\Upsilon^t_d)}^{- 1}   \bar{\beta^{2,j,k}_{11}} \Upsilon^t_d = \Upsilon^t_d  \bar{\beta^{2,j,k}_{11}}  ( \Upsilon^t_d )^{ - 1 }. $ Thus, \eqref{jyotishmannew} is equivalent to 4.

The equation remaining to be proved is the third equation of 2. which follows by part 4. and \eqref{jyotishmannew}. Indeed, by \eqref{jyotishmannew},
  $\beta^{2,j,r}_{22} = \Upsilon^t_d \bar{\beta^{2,j,r}_{11}} ( \Upsilon^t_d )^{- 1}. = \bar{C} \delta_{\downarrow} ( C )^t \bar{\beta^{2,j,r}_{11}} \bar{C} ( \delta_{\downarrow} )^{- 1} ( C )^t. $

  As $ ( C )^* \beta^{2,j,r}_{11} C $ is diagonal by part 4., $ ( C )^t \bar{\beta^{2,j,r}_{11}} \bar{C}$ is also diagonal and hence it commutes with $ \delta_{\downarrow} $ and thus we get $\beta^{2,j,r}_{22} = \bar{\beta^{2,j,r}_{11}}.$

Conversely, if 1. - 4. of this Lemma are satisfied, then it can be easily verified that \eqref{eq:relAlpha}, \eqref{eq:condsolved} and \eqref{eq:condsolvedfivefive} are satisfied and hence $ U  $ commutes with $ D_F. $
\end{proof}

\noindent
In view of Lemma \ref{lemma:5}, we define elements $x_k$ and $3\times 3$ matrices $T_m$ by
$$
\alpha^2_{11} = \sum^n_{k = 1} e_{kk} \otimes x_k\;,
\qquad\quad
\beta^{\,1,j,k}_{11} = \sum^n_{m = 1} e_{mm} \otimes (T_m)_{j,k} \;.
$$
Hence, by part 2. of Lemma \ref{lemma:5},
$$
\beta^{\,1,j,k}_{22} = \sum^n_{m = 1} e_{mm} \otimes (\bar T_m )_{j,k} \;.
$$
Moreover, let
$$
X ( s, m ) =  \sum e_{ij} \otimes ( \beta^{2,i,j}_{11} )_{s,m} \;.
$$

\smallskip

\begin{lemma} \label{lemma:U11th}
If $U$ is a unitary corepresentation satisfying the hypothesis of Lemma \ref{lemma:5}, then
the matrices  $\alpha^i_{rr}$, $T_m$ and $X(m,m)$ are biunitaries. In particular,
$\{ x_1, x_2,....., x_n \} $ are unitary elements.
\end{lemma}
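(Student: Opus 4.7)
The plan is to exploit two structural properties of $U$: its unitarity $UU^*=U^*U=1$, and the coaction condition $\mathrm{Ad_U}(B_F)\subset B_F\otimes_{\mathrm{alg}}Q$ established in Lemma \ref{lemma:split}. First I would restrict $UU^*=U^*U=1$ to each $D_F$-invariant subspace $V_i\oplus \bar V_i$ (cf.~Lemma \ref{lemma:appendix2}). Combining this with the off-diagonal vanishing of Lemma \ref{lemma:vanish} and the block structure of \eqref{eq:Usecond}, one obtains that each $\alpha^i_{rr}$ is unitary in $M_n(Q)$, and that the $3\times 3$ block matrices with entries $\beta^{1,j,k}_{11}$ and $\beta^{2,j,k}_{11}$ are unitary in $M_3(M_n(Q))$. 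Since $\beta^{1,j,k}_{11}=\sum_m e_{mm}\otimes (T_m)_{jk}$ is diagonal in the generation index (Lemma \ref{lemma:5}(1)), the unitarity of the first block matrix decouples into unitarity of each $T_m$. The analogous decoupling for $\beta^{2,j,k}_{11}$ after conjugation by the CKM matrix $C$ (Lemma \ref{lemma:5}(4)) gives unitarity of each $X(m,m)$. In particular $\alpha^2_{11}=\sum_k e_{kk}\otimes x_k$ is diagonal and unitary, which settles the ``in particular'' claim that each $x_k$ is unitary.

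For biunitarity, the diagonal matrices $\alpha^2_{rr}$ are biunitary automatically, since a diagonal matrix coincides with its own transpose (Lemma \ref{lemma:5}(1)--(2)). For $T_m$, I would compute $\mathrm{Ad_U}(\inner{0,0,0,e_{pq}})$ from the explicit form of $U$: the coefficient of each $e_{ii}\otimes e_{j+1,j'+1}\otimes e_{r_0 r_0}$ is the $M_n(Q)$-valued quantity $\beta^{i,j,p}_{r_0 r_0}(\beta^{i,j',q}_{r_0 r_0})^*$. Lemma \ref{lemma:split} forces this expression to be of the form $1_n\otimes q_{pq,jj'}$, and for $i=1, r_0=2$, using Lemma \ref{lemma:5}(2) to write $\beta^{1,j,k}_{22}=\bar\beta^{1,j,k}_{11}$, this collapses to the statement that $(T_m)_{jp}^*(T_m)_{j'q}$ is independent of $m$. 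Since $\mathrm{Ad_U}$ must further preserve the central projection $\inner{0,0,0,I_3}$ (the unit of the $M_3$ summand of $B_F$), extracting the $e_{j+1,j'+1}$-coefficient from the identity $\mathrm{Ad_U}(\inner{0,0,0,I_3})=\inner{0,0,0,I_3}\otimes 1_Q$ yields $\sum_i (T_m)_{ji}^*(T_m)_{j'i}=\delta_{jj'}$, which is precisely $\bar T_m (T_m)^t=I$; this is $(T_m)^t$ unitary and hence $T_m$ is biunitary, matching the characterization of Lemma \ref{lemma:trace}. Running the parallel argument with $i=2$ and the blocks $\beta^{2,j,k}_{11}$ gives biunitarity of $X(m,m)$. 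Biunitarity of $\alpha^1_{rr}$ follows the same logic applied to the $M_2(\C)$ and first $\C$ summands: computing $\mathrm{Ad_U}(\inner{0,0,e_{12},0})$ and imposing Lemma \ref{lemma:split} forces $\alpha^1_{11}(\alpha^2_{11})^*$ to be a scalar multiple of $1_n$ in $M_n(Q)$; since $\alpha^2_{11}$ is diagonal with unitary entries $x_k$, this forces $\alpha^1_{11}$ itself to be diagonal and hence biunitary. The case $\alpha^1_{22}$ is handled analogously using the first $\C$ coaction together with the commutation relation $\bar\alpha^1_{22}\Upsilon_\nu=\Upsilon_\nu\alpha^1_{11}$ from Lemma \ref{lemma:5}(3).

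The main obstacle will be the bookkeeping of $\mathrm{Ad_U}$ as an element of $B(H_F)\otimes Q$: for each basis element of $B_F$ one must carefully extract the $M_n(Q)$-valued coefficient of each $e_{ii}\otimes e_{j+1,j'+1}\otimes e_{r_0 r_0}$, recognize when Lemma \ref{lemma:split} forces it to be a scalar multiple of $1_n$ (which in turn translates to $m$-independence of the quadratic expressions in $T_m$ or $X(m,m)$), and then show that the identity-preservation identities coming from central projections of $B_F$ deliver precisely the condition $\bar{(\cdot)}(\cdot)^t=I$ demanded by biunitarity and Lemma \ref{lemma:trace}.
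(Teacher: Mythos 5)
Your first step --- extracting unitarity of $\alpha^i_{rr}$, of the $3\times 3$ matrices $T_m$ and of $X(m,m)$ from $UU^*=U^*U=1\otimes 1$ restricted to the orthogonal blocks of \eqref{eq:Usecond} --- is exactly what the paper does, and it settles the ``in particular'' claim about the $x_k$. Where you diverge is biunitarity, and there your route both overshoots the hypotheses and misses the mechanism the paper relies on. The point of the block form \eqref{eq:Uprime}/\eqref{eq:Usecond}, which is forced by compatibility with $J_F$, is that $U$ carries the \emph{conjugate} blocks $\bar\alpha^i_{rr}$ and $\bar\beta^{\,i,j,k}_{rr}$ on the orthogonal antiparticle subspaces $e_{i_3+2,j_3+2}$. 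Hence $UU^*=U^*U=1\otimes 1$ yields, in one stroke, the relations
$$
\sum\nolimits_k \beta_{rr}^{\,i,j,k}(\beta_{rr}^{\,i,l,k})^*=\sum\nolimits_k \bar\beta_{rr}^{\,i,j,k}(\bar\beta_{rr}^{\,i,l,k})^*=\delta_{jl}
$$
together with their $U^*U$ counterparts, i.e.\ unitarity of each block matrix \emph{and} of its conjugate. Since $\bar B$ unitary is equivalent to $B^t$ unitary, biunitarity of $\alpha^i_{rr}$, $T_m$ and $X(m,m)$ is immediate; no coaction input is needed.

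Your argument instead derives the transpose relations from $\mathrm{Ad_U}$ preserving $B_F$ and its central projections. This has two concrete defects. First, condition \eqref{eq:cond6} is not part of the hypothesis of the lemma: the statement assumes only that $U$ is a unitary corepresentation of the form in Lemma \ref{lemma:5}, and the paper deliberately postpones imposing \eqref{eq:cond6} to Lemma \ref{lemma:Utenth}, which takes the biunitarity established here as input before characterizing when \eqref{eq:cond6} holds. So your proof establishes a strictly weaker statement than the one needed at this stage of the argument (it would still suffice for the universality half of Proposition \ref{prop:main}, where objects of $\catA$ do satisfy \eqref{eq:cond6}, but not for the lemma as stated). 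Second, even granting \eqref{eq:cond6}, extracting the coefficient of $e_{j+1,j'+1}$ from $\mathrm{Ad_U}(\inner{0,0,0,I_3})=\inner{0,0,0,I_3}\otimes 1_Q$ gives only $\bar T_m(T_m)^t=I$, which is one half of the unitarity of $(T_m)^t$; over a noncommutative $C^*$-algebra the other relation $(T_m)^t\,\bar T_m=I$ does not follow automatically and would require a separate computation. Both issues disappear once you notice that the conjugate blocks already sit inside $U$ itself.
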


\begin{proof}
The condition $ U U^* = 1 \otimes 1 $ implies that for $ r = 1,2, $
$$ \alpha^i_{rr} ( \alpha^i_{rr} )^*  =  \bar\alpha^i_{rr} ( \bar\alpha^i_{rr} )^* = 1, $$
 $$ \sum_k \beta_{rr}^{\,i,j,k} ( \beta_{rr}^{\,i,l,k} )^* = \sum_k \bar\beta_{rr}^{\,i,j,k} ( \bar\beta_{rr}^{\,i,j,k} ) = \delta_{jl}.  $$
Similarly, from $ U^* U = 1 \otimes 1 $ we get the relations $$ ( \alpha^i_{rr} )^* \alpha^i_{rr}   =  ( \bar\alpha^i_{rr} )^* \bar\alpha^i_{rr}  = 1 ,$$
$$ \sum_k ( \beta_{rr}^{\,i,l,k} )^* \beta_{rr}^{\,i,j,k}  = \sum_k ( \bar\beta_{rr}^{\,i,l,k} )^* \bar\beta_{rr}^{\,i,j,k}  = \delta_{jl}.  .$$
Thus, the matrices  $ \alpha^i_{rr}$, $T_m$ and $X(m,m) $  are biunitaries.
\end{proof}

We note that in Lemma \ref{lemma:vanish} we provide a necessary condition for \eqref{eq:cond6}.
The next Lemma gives conditions that are necessary and sufficient.

\begin{lemma} \label{lemma:Utenth}
Assume $U$ satisfies the hypothesis of Lemma \ref{lemma:5} and \ref{lemma:U11th}.
The condition \eqref{eq:cond6} is satisfied, i.e.~the coaction
$\mathrm{Ad_U}$ preserves the subalgebra $B_F$,
if{}f there exists a unitary $x_0$  such that\vspace{-10pt}
\begin{subequations}
\begin{gather}
\alpha^1_{11} = {\rm diag} (x_0x_1,\ldots,x_0x_n ) \;, \quad\qquad
\alpha^2_{22} =  {\rm diag} (x_1^*,\ldots,x_n^*) \;, \label{eq:UtenthD}\\[6pt]
\beta^{2,i,j}_{11} = {\rm diag} ( x_0^* (T_1)_{i,j},\ldots, x_0^* (T_n)_{i,j} ) \;, \label{eq:UtenthA}\\[4pt]
\sum\nolimits_{m=1}^nC_{rm}\bar C_{sm}(T_m)_{j,k}=0 \qquad\forall\;r\neq s\quad (\,r,s=1,\ldots,n;\,j,k=1,2,3\,)\,, \label{eq:UtenthC} \\[4pt]
(T_m^*)_{i,j}(T_m)_{k,l}=(T_{m'}^*)_{i,j}(T_{m'})_{k,l} \qquad\forall\;m,m' \quad (\,i,j,k,l=1,2,3,\; m,m'=1,\ldots,n\,)\,, \label{eq:UtenthB} 
\end{gather}
\end{subequations}
and the adjoint coaction is
\begin{subequations}
\begin{align}
\mathrm{Ad_U}(\inner{0,0,e_{12},0}) &=\inner{0,0,e_{12},0}\otimes x_0, \label{eq:56a} \\
\mathrm{Ad_U}(\inner{0,0,e_{21},0}) &=\inner{0,0,e_{21},0}\otimes x_0^*, \label{eq:56b} \\
\mathrm{Ad_U}(\inner{0,0,0,e_{ij}}) &=\sum\nolimits_{kl}\inner{0,0,0,e_{kl}} \otimes ( (T_1)_{k,i})^*  (T_1)_{l,j} \;. \label{eq:56c}
\end{align}
\end{subequations}
Moreover, $\inner{1,0,0,0}$, $\inner{0,1,0,0}$
and $\inner{0,0,e_{ii},0}$ ($i=1,2$) are coinvariant.
\end{lemma}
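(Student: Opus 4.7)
By Lemma \ref{lemma:split}, condition \eqref{eq:cond6} decomposes into four independent conditions, one for each summand of $B_F=\C\oplus\C\oplus M_2(\C)\oplus M_3(\C)$. My plan is to impose \eqref{eq:cond6} on the two nontrivial families of matrix-unit generators — those of $M_2(\C)$ and of $M_3(\C)$ — extract the resulting algebraic relations among the blocks $\alpha^i_{rr}$ and $\beta^{i,j,k}_{rr}$ of $U$ (in the form established in Lemmas \ref{lemma:vanish}, \ref{lemma:5} and \ref{lemma:U11th}), and then verify the remaining coinvariances by direct substitution. The converse implication reduces to plugging the derived forms back into $U$ and checking the shape of $\mathrm{Ad_U}$ on each generator.

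First I would compute $\mathrm{Ad_U}(\inner{0,0,e_{12},0}) = U(e_{12}\otimes 1\otimes e_{11}\otimes 1)U^*$. Using the explicit form of $U$, this splits into an $\alpha$-contribution living on the second-factor position $e_{11}$ and a $\beta$-contribution living on the positions $e_{j+1,j'+1}$ for $j,j'=1,2,3$. Demanding that the total land in $\inner{0,0,e_{12},0}\otimes Q$ — i.e.\ that the second-factor dependence collapse to $1_{M_4}$ with one common scalar $M_n$-coefficient — produces two coupled equations: $\alpha^1_{11}(\alpha^2_{11})^*=x_0\cdot 1_n$ for some unitary $x_0\in Q$, and the matrix identity $B^1(B^2)^*=x_0\cdot 1_{3n}$ for the $3\times 3$ matrices $B^i=(\beta^{i,j,k}_{11})_{jk}$ with entries in $M_n(Q)$. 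The former yields \eqref{eq:UtenthD} after combining with Lemma \ref{lemma:5}(2) and the unitarity of the $x_k$; the latter, inverted using the biunitarity of $B^1$ (which follows from biunitarity of the $T_m$'s), gives $B^2=x_0^*B^1$, i.e.\ \eqref{eq:UtenthA}. The coactions \eqref{eq:56a} and \eqref{eq:56b} are read off directly.

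Next I would compute $\mathrm{Ad_U}(\inner{0,0,0,e_{ij}})=\sum_i e_{ii}\otimes U_iXU_i^*$ for $X=e_{i+1,j+1}\otimes(e_{22}+e_{33})\otimes 1$. Using $\beta^{i,j,k}_{22}=\bar\beta^{i,j,k}_{11}$ from Lemma \ref{lemma:5}(2), each $U_iXU_i^*$ simplifies to an expression in the quantities $\bar\beta^{i,j',i}_{11}(\bar\beta^{i,j'',j}_{11})^*$. Compatibility of the two halves $i=1,2$ (which is required for the first factor to collapse to $1_{M_2}$) is automatic given Step 1, and demanding that the $M_n$-coefficient be a scalar multiple of $1_n$ then forces $((T_m)_{j',i})^*(T_m)_{j'',j}$ to be independent of $m$ — this is \eqref{eq:UtenthB} — and simultaneously identifies the coaction \eqref{eq:56c}. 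The CKM-weighted relation \eqref{eq:UtenthC} finally drops out by combining the now-explicit diagonal form \eqref{eq:UtenthA} of $\beta^{2,j,k}_{11}$ with Lemma \ref{lemma:5}(4), which says that $C^*\beta^{2,j,k}_{11}C$ is diagonal: the off-diagonal vanishing of this product, after substituting \eqref{eq:UtenthA}, is exactly \eqref{eq:UtenthC}.

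The coinvariance of $\inner{1,0,0,0}$, $\inner{0,1,0,0}$ and $\inner{0,0,e_{ii},0}$ then reduces to an algebraic verification using $x_k^*x_k=1$ and the biunitarity of the $T_m$, and the converse implication is a routine substitution. The main obstacle I anticipate is the multi-layer tensor bookkeeping between the five factors of $\B(H_F)\otimes Q$, and in particular, in Step 2, the delicate matching between the up-quark sector (controlled by $\beta^1$ and the $T_m$'s) and the down-quark sector (controlled by $\beta^2$): it is precisely this matching that both pins down the phase factor $x_0^*$ in \eqref{eq:UtenthA} and, through Lemma \ref{lemma:5}(4), produces the CKM constraint \eqref{eq:UtenthC}.
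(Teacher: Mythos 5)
Your proposal is correct and follows essentially the same route as the paper's proof: imposing \eqref{eq:cond6} on $\inner{0,0,e_{12},0}$ to extract $x_0$, the diagonal forms \eqref{eq:UtenthD}--\eqref{eq:UtenthA} and the coactions \eqref{eq:56a}--\eqref{eq:56b}, then on $\inner{0,0,0,e_{ij}}$ to obtain \eqref{eq:UtenthB} and \eqref{eq:56c}, with \eqref{eq:UtenthC} coming from combining the diagonal form of $\beta^{2,j,k}_{11}$ with part 4 of Lemma \ref{lemma:5}, and the remaining coinvariances from unitarity. This is exactly the structure of the argument in the paper.
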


\begin{proof}
We use the notations of the previous lemmas.
The coinvariance of $\inner{1,0,0,0}$, $\inner{0,1,0,0}$
and $\inner{0,0,e_{ii},0}$ ($i=1,2$) follows automatically
from unitarity of $U$.
Since
\begin{align*}
\mathrm{Ad_U}(\inner{0,0,e_{12},0}) &=
e_{12}\otimes e_{11}\otimes e_{11}\otimes\alpha_{11}^1(\alpha_{11}^2)^*
+\sum\nolimits_{ijk}e_{12}\otimes e_{i+1,k+1}\otimes e_{11}\otimes\beta_{11}^{1,i,j}
(\beta_{11}^{2,k,j})^*
\;,
\end{align*}
condition \eqref{eq:cond6} implies that there exists
$x_0\in Q$ such that
\begin{equation}\label{eq:beta11}
\alpha_{11}^1(\alpha_{11}^2)^*= \sum^n_{i = 1} e_{ii} \otimes x_0\;,
\qquad\quad
\sum_j \beta_{11}^{1,i,j}
(\beta_{11}^{2,k,j})^*=\delta_{i,k}  ( \sum^n_{i = 1} e_{ii} \otimes x_0 )\;.
\end{equation}
Unitarity of $\alpha^i_{rr}$ implies unitarity of $x_0$.  Moreover, we have $ \alpha^1_{11} = {\rm diag} ( x_0 x_1,\ldots, x_0 x_n )$.
\\
Using the relation $ \alpha^2_{11} = \bar{\alpha^2_{22}} $ in Lemma \ref{lemma:5}, we deduce that $\alpha^2_{22} = \sum^n_{k = 1} e_{kk} \otimes x^*_k$.
\\
We get $\mathrm{Ad_U}(\inner{0,0,e_{12},0})=\inner{0,0,e_{12},0}\otimes x_0$
and $\mathrm{Ad_U}(\inner{0,0,e_{21},0})=\inner{0,0,e_{21},0}\otimes x_0^*$.
\\
From the second equation of \eqref{eq:beta11},  we deduce that
$ \sum_j (T_m)_{i,j} ( \beta^{2,k,j}_{11} )^*_{sm} = \delta_{m,s} \delta_{i,k}x_0$.

Thus, $ \sum_j (  T_m )_{i,j} {(  X ( s, m )  )}^*_{k,j} = \delta_{m,s} \delta_{i,k} x_0  $.
and in particular $ T_m X ( s, m )^*  = \delta_{m,s} {\rm diag} ( x_0,x_0,x_0 )$,
which implies
$$
X ( s, m ) = 0 ~ {\rm if} ~  s \neq m ~ {\rm and} ~  X ( s, s ) = {\rm diag} ( x_0^*, x_0^*, x_0^* ) T_s,
$$
which translates into
\begin{equation} \label{beta2diagonaljyo}
               \beta^{2,i,j}_{11} = {\rm diag} ( x_0^* (T_1)_{i,j},\ldots, x_0^* (T_n)_{i,j}) \;.
\end{equation}
Moreover, as $ C^*\beta^{2}_{jk} C  $ is diagonal from 4. of Lemma \ref{lemma:5}, we get for all $ j,k = 1,2,3, $
 $$ \sum\nolimits_m C_{rm}  \bar{C}_{sm}(T_m)_{j,k} = 0 ~ {\rm if} ~  r \neq s .$$
Now we compute
\begin{align*}
\mathrm{Ad_U}(\inner{0,0,0,e_{rs}}) &= \sum\nolimits_{j,a,c} e_{11} \otimes e_{j+1,c+1} \otimes ( e_{22} + e_{33} ) \otimes e_{aa} \otimes ( (T_a)_{j,r} )^* (T_a)_{c,s}
 \\
& + \sum_{j,a,c,p,b} e_{22} \otimes e_{j+1,c+1} \otimes ( e_{22} + e_{33} ) \otimes e_{ap} \otimes ( \beta^{2,j,r}_{22} )_{a,b} ( \beta^{2,c,s}_{22} )^*_{p,b} \;.
\end{align*}
Coinvariance of $M_3(\C)$ gives the following relations for all $j,r,c,s$:
\begin{subequations}
\begin{align}
  \sum_b ( \beta^{2,j,r}_{22} )_{a,b} ( \beta^{2,c,s}_{22} )^*_{p,b} ~ {\rm is}~ 0 ~ {\rm unless} ~ a = p, \label{beta2diagonaljyo1} \\
  \sum_b ( {\bar{\beta}}^{2,j,r}_{11} )_{a,b} ( \beta^{2,c,s}_{11} )_{p,b} ~ {\rm is}~ 0 ~ {\rm unless} ~ a = p, \label{beta2diagonaljyo2} \\
  ( T_a)_{j,r}^* ( T_a)_{c,s} = ( T_b)_{j,r}^* ( T_b)_{c,s}  ~ \forall a,b\,, \label{beta2diagonaljyo3} \\[4pt]
  ( T_a)_{j,r}^* ( T_a)_{c,s} = \sum_b ( \beta^{2,j,r}_{22} )_{a,b} ( \beta^{2,c,s}_{22} )^*_{a,b} ~ \forall a\,, \label{beta2diagonaljyo4} \\
  \sum_b ( \beta^{2,j,r}_{22} )_{a,b} ( \beta^{2,c,s}_{22} )^*_{a,b} = \sum_b ( {\bar{\beta}}^{2,j,r}_{11} )_{a^{\prime},b} ( \beta^{2,c,s}_{11} )_{a^{\prime},b} ~ \forall a,a^{\prime}\,. \label{beta2diagonaljyo5}
 \end{align}
\end{subequations}
However, it turns out that \eqref{beta2diagonaljyo3} is the only new information.
Indeed, \eqref{beta2diagonaljyo1} and \eqref{beta2diagonaljyo2} are consequences of the facts that $ \beta^{2,i,j}_{11} $ is diagonal ( \eqref{beta2diagonaljyo} )  and $ \beta^{2,j,r}_{22} = \bar{\beta^{2,j,r}_{11}}$ ( part 2. of Lemma \ref{lemma:5} ). The equation \eqref{beta2diagonaljyo4}  follows again from \eqref{beta2diagonaljyo}. Finally \eqref{beta2diagonaljyo5} follows from \eqref{beta2diagonaljyo3} and \eqref{beta2diagonaljyo4} taken together.
The equations \eqref{beta2diagonaljyo1} - \eqref{beta2diagonaljyo5} show that $ \mathrm{Ad_U}(\inner{0,0,0,e_{rs}}) $ is given by \eqref{eq:56c}. This completes the proof.
\end{proof}

\smallskip

\noindent
We are now in the position to prove Proposition \ref{prop:main}, i.e.~that the universal object in
category $\catA$ is the CQG given in Lemma~\ref{prop:mainA}. with corepresentation $U$ as in \eqref{eq:Ufourth}.

\smallskip

\begin{proof}[\bf Proof of Proposition \ref{prop:main}]~\\[2pt]
The proof is in two steps: 1. we need to prove that the CQG in Lemma~\ref{prop:mainA}
with corepresentation \eqref{eq:Ufourth} is an object of the category $\catA$ and 2. we need to prove that this object is universal.

\medskip

\noindent{\bf 1.}
First we notice that the operator $U$ in \eqref{eq:Ufourth} is indeed a unitary corepresentation: the unitaries/biunitaries $x_k$, $x_0x_k$, $T_m$, $x_0^*T_m$, $V$ and their ``bar'' define unitary corepresentations due to \eqref{eq:zzz}, and they coact on orthogonal subspaces of $H_F$ in \eqref{eq:Ufourth} so that $U$ is an orthogonal direct sum of unitary corepresentations.

Since our $U$ is of the form \eqref{eq:Uprime}, by Lemma \ref{lemma:appendix1} it
satisfies the compatibility conditions with $\gamma_F$ and $J_F$.

Since $U$ is of the form \eqref{eq:Usecond}, with parameters
\begin{align*}
\alpha^1_{11} &:= \sum^n_{k = 1} e_{kk} \otimes x_0 x_k, &
\alpha^1_{22} &:=  \sum^n_{i,j = 1} e_{ij} \otimes  V_{ij}, &
\alpha^2_{11} &:=(\alpha^2_{22})^*:= \sum^n_{k = 1} e_{kk} \otimes x_k, \\
\beta^{1,j,k}_{22} &:= \sum^n_{m= 1} e_{mm} \otimes (T_m)_{j,k}, &
\beta^{1,j,k}_{11} &:= ( \beta^{1,j,k}_{22} )^*, &
\beta^{2,j,k}_{11} &:= \sum^n_{m = 1} e_{mm} \otimes  x_0^* (T_m)_{j,k} , \\
&& \hspace{-2cm}\alpha^i_{j_1,k_1}:=\; & \beta^{i,j_0,k_0}_{j_1,k_1} :=0 ,  &&\hspace{-1.5cm}\mathrm{if}\;j_1\neq k_1 ,
\end{align*}
satisfying 1. -- 4. of Lemma \ref{lemma:5}, by Lemmas \ref{lemma:appendix2} and \ref{lemma:5} it follows that $U$ commutes with $D_F$.

Since the parameters defined above satisfy the conditions in Lemma \ref{lemma:Utenth} too, we
have that the adjoint coaction preserves $B_F$ and then our CQG with corepresentation \eqref{eq:Ufourth} is an object of the category $\catA$.

\medskip

\noindent{\bf 2.}
Now we pass to universality.
From Lemmas \ref{lemma:appendix1}-\ref{lemma:Utenth} it follows that any object $(Q,U)$
in the category $\catA$ must be generated by the matrix entries of a corepresentation $U$ of the form
\eqref{eq:Ufourth}, with matrix entries satisfying \eqref{eq:qisot}. In particular \eqref{eq:qisotC} coincides
with \eqref{eq:UtenthC}, \eqref{eq:qisotB} coincides with \eqref{eq:UtenthB}, and \eqref{eq:qisotA} coincides with
point 3. of Lemma \ref{lemma:5} after the parameter substitution in (\ref{eq:UtenthD}--\ref{eq:UtenthA})
and after renaming $V$ the matrix $\alpha^1_{22}$. 
Different summands in \eqref{eq:Ufourth} coact on orthogonal subspaces of $H_F$: hence from unitarity
of $U$ we deduce the unitarity of the $x_k$'s and of the matrices $T_m,V$ and their ``bar'', i.e.~they must
be biunitary.
This proves that any object in the
category is a quotient of the CQG in Prop.~\ref{prop:main}.
\end{proof}

\begin{center}
\textsc{Acknowledgments}
\end{center}
\vspace{-4pt}
We would like to thank Prof.~J.W.~Barrett and Prof.~A.H.~Chamseddine for useful conversations and comments.

%%% ======================================================================

\noindent{\small
\textit{Email addresses:} \texttt{jyotishmanb@gmail.com}, \texttt{dandrea@sissa.it}, \texttt{dabrow@sissa.it}.}

%%% ======================================================================

\begin{thebibliography}{99}

\footnotesize\itemsep=0pt

\bibitem{Ahm02}
Q.R. Ahmad et al. (SNO Collaboration),
\textit{Direct Evidence for Neutrino Flavor Transformation from Neutral-Current Interactions in the Sudbury Neutrino Observatory}, Phys. Rev. Lett. \textbf{89} (2002), 011301.

\bibitem{Ban97}
T. Banica, \textit{Le groupe quantique compact libre $U(n)$}, Commun. Math. Phys. \textbf{190} (1997), 143--172.

\bibitem{Ban05a}  T. Banica, \textit{Quantum automorphism groups of small metric spaces}, Pacific J. Math. \textbf{219} (2005), 27--51.

\bibitem{Ban05b} T. Banica,  \textit{Quantum automorphism groups of homogeneous graphs},  J. Funct. Anal. \textbf{224} (2005), 243--280.

\bibitem{BV09}
T. Banica and R. Vergnioux,
\textit{Invariants of the half-liberated orthogonal group},
to appear in Ann. Inst. Fourier,
arxiv:0902.2719v2 [math.QA].

\bibitem{BGS08}
J.~Bhowmick, D.~Goswami and A.~Skalski, \textit{Quantum Isometry Groups of $0$-Dimensional Manifolds},
to appear in Trans. AMS, arXiv:0807.4288v1 [math.OA].

\bibitem{BG09}
J. Bhowmick and D. Goswami, \textit{Quantum Group of Orientation preserving
  Riemannian Isometries}, J. Funct. Anal. \textbf{257} (2009), 2530--2572.

\bibitem{BG10} J. Bhowmick and D. Goswami, \textit{Quantum isometry groups of the Podles spheres}, J. Funct. Anal. \textbf{258} (2010), 2937--2960.

\bibitem{BG09b}
J.~Bhowmick and D.~Goswami, \textit{Some counterexamples in the theory of quantum isometry groups},
to appear in Lett. Math. Phys., arXiv:0910.4713v1 [math.OA].

\bibitem{BS10}
J.~Bhowmick and A.~Skalski, \textit{Quantum isometry groups of noncommutative manifolds associated to group $C^*$-algebras},
to appear in J. Geom. Phys., arXiv:1002.2551v2 [math.OA].

\bibitem{Bic03}
J. Bichon, \textit{Quantum automorphism groups of finite graphs},  Proc. Amer. Math. Soc. \textbf{131} (2003), no. 3, 665--673.

\bibitem{CC97}
A.H. Chamseddine and A.~Connes, \textit{The Spectral Action Principle}, Commun. Math. Phys. \textbf{186} (1997), 731--750.

\bibitem{CC08}
A.H. Chamseddine and A.~Connes, \textit{Why the Standard Model}, J. Geom. Phys. \textbf{58} (2008), 38--47.

\bibitem{CCM06}
A.H. Chamseddine, A.~Connes and M.~Marcolli, \emph{{Gravity and the standard
  model with neutrino mixing}}, Adv. Theor. Math. Phys. \textbf{11} (2007),
  991--1090.

\bibitem{Con94}
A.~Connes, \textit{Noncommutative Geometry}, Academic Press, 1994.

\bibitem{Con95}
A.~Connes, \emph{Noncommutative geometry and reality}, J.~Math. Phys. \textbf{36} (1995), 6194--6231.

\bibitem{Con98}
A.~Connes, \textit{Noncommutative differential geometry and the structure
of space-time}, Proceedings of the Symposium on Geometry, Huggett,
S.A. (ed.) et al., pp.~49-80, Oxford Univ. Press, Oxford UK 1998.

\bibitem{Con06}
A.~Connes, \textit{Noncommutative geometry and the Standard Model with neutrino
  mixing}, JHEP \textbf{11} (2006), 081.

\bibitem{Con08}
A.~Connes, \textit{On the spectral characterization of manifolds}, 2008,
  arXiv:0810.2088v1 [math.OA].

\bibitem{CM08}
A.~Connes and M.~Marcolli, \textit{Noncommutative geometry, quantum fields and
  motives}, Colloquium Publications, vol.~55, AMS, 2008.

\bibitem{Coq97}
R. Coquereaux, \textit{On the finite dimensional quantum group $M_3\oplus
(M_{2|1}(\Lambda^2))_0$}, Lett. Math. Phys. \textbf{42} (1997), 309--328.

\bibitem{DDLW07}
F.~D'Andrea, L.~D{\k a}browski, G.~Landi and E.~Wagner, \textit{Dirac operators
  on all Podle\'s spheres}, J. Noncomm. Geom. \textbf{1} (2007), 213--239.

\bibitem{DDL06}
F.~D'Andrea, L.~D{\k a}browski and G.~Landi, \textit{The Isospectral Dirac
  Operator on the $4$-dimensional Orthogonal Quantum Sphere}, Commun. Math.
  Phys. \textbf{279} (2008), 77--116.

\bibitem{DLPS05}
L.~D{\k a}browski, G.~Landi, M.~Paschke and A.~Sitarz, \textit{The spectral
  geometry of the equatorial Podle\'s sphere}, Comptes Rendus Acad. Sci. Paris
  \textbf{340} (2005), 819--822.

\bibitem{DLS05}
L.~D{\k a}browski, G.~Landi, A.~Sitarz, W.~van Suijlekom and J.C. V\'arilly,
  \textit{The Dirac operator on $SU_q(2)$}, Comm. Math. Phys. \textbf{259}
  (2005), 729--759.

\bibitem{DNS98}
L. Dabrowski, F. Nesti and P. Siniscalco, \textit{A Finite Quantum Symmetry of $M(3,\C)$},
Int. J. Mod. Phys. \textbf{A13} (1998), 4147--4162.

\bibitem{Fuk98}
Y. Fukuda et al. (Super-Kamiokande Collaboration),
\textit{Evidence for Oscillation of Atmospheric Neutrinos},
Phys. Rev. Lett. \textbf{81} (1998), 1562--1567.

\bibitem{Gos07}
D. Goswami, \textit{Quantum Group of Isometries in Classical and Noncommutative Geometry},
  Commun. Math. Phys. \textbf{285} (2009), 141--160.

\bibitem{Gos10}
D. Goswami, \textit{Quantum Isometry Group for Spectral Triples with Real Structure},
  SIGMA \textbf{6} (2010), 007.

\bibitem{Kas95}
D. Kastler, \textit{Regular and adjoint representation of $SL_q(2)$ at third root of
unit}, CPT internal report (1995).

\bibitem{LMMS96}
F. Lizzi, G. Mangano, G. Miele and G. Sparano,
\textit{Fermion Hilbert space and fermion doubling in the noncommutative geometry approach to gauge theories},
Phys. Rev. \textbf{D55} (1997), 6357--6366.

\bibitem{MD98}
A.~Maes and A.~Van Daele, \textit{Notes on compact quantum groups}, Nieuw Arch. Wisk. \textbf{16}  (1998),  73--112.

\bibitem{Pod95} P. Podles, \textit{Symmetries of quantum spaces. Subgroups and quotient spaces of quantum $SU(2)$ and $SO(3)$ groups}, Comm. Math. Phys. \textbf{170} (1995), 1-20.

\bibitem{Sol10}
P.M.~So{\l}tan, \textit{Quantum $SO(3)$ groups and quantum group actions on $M_2$},
J. Noncommut. Geom. \textbf{4} (2010), 1--28.

\bibitem{DW96}
A.~Van Daele and S.~Wang, \textit{Universal quantum groups}, International J. Math. \textbf{7} (1996), 255--264.

\bibitem{Wan95}
S.~Wang, \textit{Free products of compact quantum groups}, Comm. Math. Phys. \textbf{167} (1995), no. 3, 671--692.

\bibitem{Wan98a}
S. Wang, \textit{Quantum Symmetry Groups of Finite Spaces}, Commun. Math. Phys.
  \textbf{195} (1998), 195--211.

\bibitem{Wan98b}
S. Wang, \textit{Structure and Isomorphism Classification of Compact Quantum Groups $A_u(Q)$ and $B_u(Q)$}, J. Operator Theory \textbf{48} (2002), 573--583.

\bibitem{Wan99}
S.~Wang, \textit{Ergodic actions of universal quantum groups on operator algebra}, Comm. Math. Phys. \textbf{203} (1999), no. 2, 481--498.

\bibitem{Wor87}
S.L.~Woronowicz, \textit{Compact matrix pseudogroups},
   Commun. Math. Phys. \textbf{111} (1987), 613--665.

\bibitem{Wor95}
S.L.~Woronowicz, \textit{Compact quantum groups}, in Sym\'etries quantiques
(Les Houches, 1995), pp.~845--884, edited by A. Connes et al., Elsevier, Amsterdam, 1998.

\end{thebibliography}
\end{document}